\newtheorem{thm}{Theorem}[section]
\newtheorem*{thm*}{Theorem}
\newtheorem{cor}[thm]{Corollary}
\newtheorem{lem}[thm]{Lemma}
\newtheorem{prop}[thm]{Proposition}
\newtheorem*{prop*}{Proposition}
\newtheorem*{conj*}{Conjecture}
\newtheorem*{dfn*}{Definition}
\theoremstyle{definition}
\newtheorem{rem}[thm]{\textbf{Remark}}
\newtheorem*{rmk*}{Remark}
\newtheorem*{fact*}{Fact}
\theoremstyle{proof}
\newcommand{\norm}[1]{\left\Vert#1\right\Vert}
\newcommand{\snorm}[1]{\Vert#1\Vert}
\newcommand{\abs}[1]{\left\vert#1\right\vert}
\newcommand{\set}[1]{\left\{#1\right\}}
\newcommand{\brac}[1]{\left(#1\right)}
\newcommand{\Real}{\mathbb{R}}
\newcommand{\eps}{\varepsilon}
\newcommand{\K}{\mathcal{K}}
\newcommand{\I}{\mathcal{I}}
\newcommand{\M}{\mathcal{M}}
\renewcommand{\L}{\mathcal{L}}
\newlength{\defbaselineskip}
\numberwithin{equation}{section}
\def \F {\mathcal{F}}
\begin{document}

\title{Properties of Isoperimetric, Functional and Transport-Entropy Inequalities Via Concentration
}
\markright{Isoperimetric Properties Via Concentration}
\author{Emanuel Milman\textsuperscript{1}}

\footnotetext[1]{
Department of Mathematics,
University of Toronto,
40 St. George Street,
Toronto, Ontario M5S 2E4,
Canada.
Email: emilman@math.toronto.edu.\\
2000 Mathematics Subject Classification: 60E15, 46G12, 60B99.\\
Keywords: isoperimetric inequality; log-Sobolev inequality; Transport-Entropy inequality; concentration inequality; stability under perturbation; Wasserstein distance.}

\maketitle

\begin{abstract}
Various properties of isoperimetric, functional, Transport-Entropy and concentration inequalities are studied on a Riemannian manifold equipped with a measure, whose generalized Ricci curvature is bounded from below. First, stability of these inequalities with respect to perturbation of the measure is obtained. The extent of the perturbation is measured using several different distances between perturbed and original measure, such as a one-sided $L^\infty$ bound on the ratio between their densities, Wasserstein distances, and Kullback--Leibler divergence. In particular, an extension of the Holley--Stroock perturbation lemma for the log-Sobolev inequality is obtained, and the dependence on the perturbation parameter is improved from linear to logarithmic. Second, the equivalence of Transport-Entropy inequalities with different cost-functions is verified, by obtaining a reverse Jensen type inequality. The main tool used is a previous precise result on the equivalence between concentration and isoperimetric inequalities in the described setting. Of independent interest is a new dimension independent characterization of Transport-Entropy inequalities with respect to the $1$-Wasserstein distance, which does not assume any curvature lower bound.
\end{abstract}

\pagestyle{myheadings}

\section{Introduction}

Let $(\Omega,d)$ denote a complete separable metric space, and let $\mu$
denote a Borel probability measure on $(\Omega,d)$.
One way to measure the interplay between the metric $d$ and the measure $\mu$ is by means of an isoperimetric
inequality. Recall that Minkowski's (exterior) boundary measure of a
Borel set $A \subset \Omega$, which we denote here by $\mu^+(A)$, is
defined as $\mu^+(A) := \liminf_{\eps \to 0} \frac{\mu(A^d_{\eps}) -
\mu(A)}{\eps}$, where $A_{\eps}=A^d_{\eps} := \set{x \in \Omega ; \exists y
\in A \;\; d(x,y) < \eps}$ denotes the $\eps$ extension of $A$ with
respect to the metric $d$. The isoperimetric profile $\I =
\I_{(\Omega,d,\mu)}$ is defined as the pointwise maximal function $\I
: [0,1] \rightarrow \Real_+$, so that $\mu^+(A) \geq \I(\mu(A))$, for
all Borel sets $A \subset \Omega$. An isoperimetric inequality measures the relation between the boundary measure and the measure of a set, by providing a lower bound on $\I_{(\Omega,d,\mu)}$ by some function $J: [0,1] \rightarrow \Real_+$ which is not identically $0$. Since $A$ and $\Omega \setminus A$ will typically have the same boundary measure, it will be convenient to also define $\tilde{\I} :[0,1/2] \rightarrow \Real_+$ as $\tilde{\I}(v) := \min(\I(v),\I(1-v))$.

Another way to measure the relation between $d$ and $\mu$ is given by concentration inequalities. The log-concentration profile $\K = \K_{(\Omega,d,\mu)}$ is defined as the pointwise maximal function $\K:\Real_+ \rightarrow \Real$ such that $1 - \mu(A^d_r) \leq \exp(-\K(r))$ for all Borel sets $A \subset \Omega$ with $\mu(A) \geq 1/2$. Note that $\K(r) \geq \log 2$ for all $r \geq 0$. Concentration inequalities measure how tightly the measure $\mu$ is concentrated around sets having measure $1/2$ as a function of the distance $r$ away from these sets, by providing a lower bound on $\K$ by some non-decreasing function $\alpha: \Real_+ \rightarrow \Real \cup \set{+\infty}$, so that $\alpha$ tends to infinity.
The two main differences between isoperimetric and concentration inequalities are that the latter ones only measure the concentration around sets having measure $1/2$, and do not provide any information for small distances $r$ (smaller than $\alpha^{-1}(\log 2)$). We refer to \cite{Ledoux-Book,MilmanSurveyOnConcentrationOfMeasure1988} for a wider exposition on these and related topics and for various applications.

It is known and easy to see that an isoperimetric inequality always implies a concentration inequality, simply by ``integrating'' along the isoperimetric differential inequality (see Section \ref{sec:pre}). In fact, it will be useful to also consider other \emph{intermediate} levels between these two extremes, such as functional inequalities (e.g. Poincar\'e, Sobolev and log-Sobolev inequalities) and Transport-Entropy inequalities (e.g. Talagrand's $T_2$ inequality and its various variants). These will be introduced later on, but for now, let us just mention that it is known that these types of inequalities typically follow from appropriate isoperimetric inequalities, and imply appropriate concentration inequalities (see \cite{Ledoux-Book,VillaniOldAndNew} or Section \ref{sec:pre} and the references therein). Schematically, this can represented in the following hierarchical diagram:
\begin{multline} \label{eq:intro-hierarchy}
\text{Isoperimetric inequalities } \Rightarrow \text{ Functional inequalities } \\
\Rightarrow \text{ Transport-Entropy inequalities } \Rightarrow \text{ Concentration inequalities} ~.
\end{multline}

All of the converse statements to the implications above are in general known to be false, due to the
possible existence of narrow ``necks'' in the geometry of the space $(\Omega,d)$ or the measures $\mu$. However, when such necks are ruled out by imposing some semi-convexity assumptions on the geometry and measure in the Riemannian-manifold-with-density setting (defined below), it was shown in our previous work \cite{EMilmanGeometricApproachCRAS,EMilmanGeometricApproachPartI} that isoperimetric and concentration inequalities are in fact equivalent, with quantitative estimates which \emph{do not depend} on the dimension of the underlying manifold (see Section \ref{sec:pre} for a precise formulation).

\medskip

The main purpose of this work is to obtain several new applications of this equivalence between isoperimetry and concentration (and hence of all the intermediate levels as well), which seem to have been previously inaccessible.

\subsection{Setup}

We will henceforth assume that $\Omega$ is a smooth complete oriented connected $n$-dimensional ($n\geq 2$) Riemannian manifold
$(M,g)$, that $d$ is the induced geodesic distance, and that $\mu$ is an absolutely continuous measure with respect to the Riemannian volume form $vol_M$ on $M$.

\begin{dfn*} \label{def:CA}
We will say that our \emph{smooth $\kappa$-semi-convexity assumptions} are satisfied $(\kappa \geq 0)$ if $\mu$ is supported on a geodesically convex set $S \subset M$, on which $d\mu = \exp(-\psi) dvol_M|_S$ with $\psi \in C^2(S)$, and as tensor fields on $S$:
\[
 Ric_g + Hess_g \psi \geq -\kappa g ~.
\]
We will say that our \emph{$\kappa$-semi-convexity assumptions} are satisfied if $\mu$ can be approximated in total-variation by measures $\set{\mu_m}$ so that each $(\Omega,d,\mu_m)$ satisfies our smooth $\kappa$-semi-convexity assumptions. \\
When $\kappa=0$, we will say in either case that our \emph{(smooth) convexity assumptions} are satisfied.
\end{dfn*}

\smallskip

Here $Ric_g$ denotes the Ricci curvature tensor of $(M,g)$ and $Hess_g$ denotes the second covariant derivative. $Ric_g + Hess_g \psi$ is the well-known Bakry--\'Emery curvature tensor, introduced in \cite{BakryEmery} (in the more abstract framework of diffusion generators), which incorporates the curvature from both the geometry of $(M,g)$ and the measure $\mu$. When $\psi$ is sufficiently smooth and $S=M$, our $\kappa$-semi-convexity assumption is then precisely the Curvature-Dimension condition $CD(-\kappa,\infty)$ (see \cite{BakryEmery}). An important example to keep in mind is that of Euclidean space $(\Real^n,\abs{\cdot})$ equipped with a probability measure $\exp(-\psi(x)) dx$ with $Hess \; \psi \geq -\kappa Id$.

\subsection{Stability of isoperimetric and functional inequalities}

One central theme in this work will be in deducing new stability results of isoperimetric and functional inequalities with respect to perturbations of the measure $\mu$, in the presence of our semi-convexity assumptions. More precisely, if $\mu_1,\mu_2$ are two probability measures on $(M,g)$ such that $\mu_2$ is close to $\mu_1$ with respect to some (not necessarily symmetric) distance, we will show that under appropriate semi-convexity assumptions, $(M,g,\mu_2)$ inherits from $(M,g,\mu_1)$ a quantitatively comparable isoperimetric or functional inequality. A-priori, it seems very difficult to analyze the stability of these questions directly (at least for non-trivial distances), possibly due to the fact that in general no stability is possible and that
some further weak convexity conditions need to be imposed. Our approach for obtaining stability results in such cases, is to
decouple the stability question from the convexity assumptions. We first pass from the isoperimetric or functional inequality to an appropriate concentration inequality (this is always possible without any further assumptions); the stability question on the level of concentration turns out to be elementary, and it is easy to obtain a concentration inequality for the perturbed measure; lastly, we utilize our semi-convexity assumptions and pass back to the isoperimetric or functional level, by employing the equivalence between concentration and isoperimetry.

Our results apply to several different notions of distance between $\mu_1$ (the original measure) and $\mu_2$ (the perturbed measure):
\begin{enumerate}
\item
$\snorm{\frac{d\mu_2}{d\mu_1}}_{L^\infty} \leq D$ - we show in Section \ref{sec:stab1} that when $\mu_2$ satisfies our semi-convexity assumptions, and $\mu_1$ satisfies a strong enough isoperimetric or functional inequality, the latter property is inherited by $\mu_2$. We formulate some results specifically for the log-Sobolev inequality, since it lies precisely on the border of our method, and since it is very useful in applications. Our results extend beyond the classical stability result for the log-Sobolev inequality of Holley and Stroock \cite{HolleyStroockPerturbationLemma}; in addition, under our convexity assumptions, we improve the classical quantitative dependence on $D$ from linear to logarithmic.
\item
$\widetilde{W}_{\Psi_1}(\mu_1,\mu_2) \leq D$ - we introduce a new metric $\widetilde{W}_{\Psi_1}$ between exponentially integrable probability measures, which we call the $\Psi_1$-Lipschitz metric. We show in Section \ref{sec:stab2} analogous stability results for this distance as for the first one, when $\mu_2$ satisfies our semi-convexity assumptions. The main advantage of using this distance over the first one is that this result may be applicable even when the measures $\mu_1,\mu_2$ are mutually singular.
\item
$W_{1}(\mu_1,\mu_2) \leq D$ - using the usual $1$-Wasserstein distance $W_1$, we show in Section \ref{sec:stab2} that under our convexity assumptions ($\kappa = 0$ case), linear (Cheeger type) isoperimetric inequalities or Poincar\'e inequalities are easily inherited. This also applies when having control over the relative entropies: $H(\mu_2 | \mu_1) \leq D$ or $H(\mu_1 | \mu_2) \leq D$.
\end{enumerate}

The importance of obtaining dimension-free log-Sobolev and Poincar\'e inequalities in the context of Statistical Mechanics has been clarified in the works of Stroock--Zegarlinski \cite{StroockZegarlinskiLogSobImpliesDSCondition}, Yoshida \cite{YoshidaLogSobEquivToDecayOfCorrelations} and Bodineau--Helffer \cite{BodineauHelfferSurvey}, who showed (roughly speaking) that these are equivalent to the decay of spin-spin correlations (implying in particular the uniqueness of the corresponding Gibbs measure in the thermodynamic limit).
The stability results described above are therefore relevant in understanding the effects of perturbing the Hamiltonian potential in this context.

\subsection{Transport-Entropy Inequalities: characterization via concentration and equivalence for different cost-functions} \label{subsec:intro-TE-equiv}

Another application pertains to Transport-Entropy inequalities, first introduced by Marton \cite{Marton86,Marton96} and developed by Talagrand \cite{TalagrandT2} (see Section \ref{sec:pre}). In general, given two convex functions $\phi,\psi: \Real_+ \rightarrow \Real_+$, one may define a $(\phi,\psi)$ Transport-Entropy inequality as the following statement:
\begin{equation} \label{eq:intro-TE}
\exists D >0 \;\;\; W_{c_{\phi,D}}(\nu,\mu) \leq \psi^{-1}(H(\nu | \mu)) \;\;\; \forall \text{ probability measure } \nu ~,
\end{equation}
where $c_{\phi,D}$ denotes the cost-function $c_{\phi,D}(x,y) := \phi(D d(x,y))$, $W_c$ is the Wasserstein distance with cost-function $c$, and $H(\nu | \mu)$ denotes the relative entropy (see Section \ref{sec:pre} for definitions). By Jensen's inequality, it is immediate that $(\ref{eq:intro-TE})$ implies the following (in general, strictly) weaker statement:
\begin{equation} \label{eq:intro-TE-weak}
\exists D >0 \;\;\; D W_{d}(\nu,\mu) \leq (\psi \circ \phi)^{-1}(H(\nu | \mu)) \;\;\; \forall \text{ probability measure } \nu ~.
\end{equation}
It was shown by Bobkov and G\"{o}tze \cite{BobkovGotzeLogSobolev} that the latter inequality is in fact equivalent to a variant of a concentration inequality. We clarify this equivalence in Proposition \ref{prop:con-TE}, which seems new and may be of independent interest, removing the inherent dimension dependence in previous results by
Djellout--Guillin--Wu \cite{DjelloutGuillinWu}, Bolley--Villani \cite{BolleyVillani}, and Gozlan--Leonard \cite{GozlanLeonard}. Using this characterization, we show that under our semi-convexity assumptions, it is possible in many cases to obtain a converse to Jensen's inequality, up to dimension-independent constants, passing back from (\ref{eq:intro-TE-weak}) to (\ref{eq:intro-TE}).

\subsection{Organization}

It would be very hard to describe our results in more detail in any reasonably sized Introduction, without first defining many necessary notions. These are given in Section \ref{sec:pre}, which serves as an extended introduction to this work. We describe the hierarchy (\ref{eq:intro-hierarchy}) in detail, and formulate
our main result from \cite{EMilmanGeometricApproachCRAS,EMilmanGeometricApproachPartI}, asserting the equivalence of concentration and isoperimetric inequalities under the semi-convexity assumptions, which permits reversing the hierarchy and completing the equivalence. The stability results with respect to perturbation of the measure are given in Sections \ref{sec:stab1} (for the distance $\snorm{\frac{d\mu_2}{d\mu_1}}_{L^\infty}$) and \ref{sec:stab2} (for the $\Psi_1$-Lipschitz, Wasserstein and relative entropy distances). In Section \ref{sec:inter}, we set the ground for properly relating between Transport-Entropy and concentration inequalities; parts of it may be of independent interest. In Section \ref{sec:TE-equiv}, we briefly state the results on the equivalence of Transport-Entropy inequalities with different cost-functions.

\medskip

\noindent \textbf{Acknowledgments.} I would like to thank Cedric Villani, Alexander Kolesnikov and Franck Barthe for their interest and remarks on this work. I also gratefully acknowledge the support of the Institute of Advanced Study, where this work was initiated, and of the University of Toronto (and in particular, of Robert McCann), where this work was concluded. Final thanks go out to the referee, for valuable and effective comments.

\section{Preliminaries} \label{sec:pre}

We reserve the use of $c,c',c_1,c_2,c_3,c_1',c_2',c_3',C,C'$ etc. to indicate universal numeric constants, independent of all other parameters (and in particular of the dimension of any underlying manifold), whose values may change from one occurrence to the next. We also use the notation $A \simeq B$ to signify that there exist constants $c_1,c_2>0$ so that $c_1 B \leq A \leq c_2 B$, and that these constants do not depend on any other parameter, unless explicitly stated otherwise. When $c_1,c_2$ depend on some additional set of parameters $S$, we may also use the notation $A \simeq_{S} B$.

\subsection{Definitions and Notation}

Let us start by recalling some of the notions mentioned in the Introduction.

Let $\F = \F(\Omega,d)$ denote the space of functions which are Lipschitz on every ball in $(\Omega,d)$, and let $f \in \F$. Functional inequalities compare between some type of expression measuring the $\mu$-averaged oscillation of $f$, and an expression measuring the $\mu$-averaged magnitude of the gradient $|\nabla f|:= g(\nabla f,\nabla f)^{1/2}$. In the general metric-space setting, one may define $|\nabla f|$ as the following Borel function:
\[
 \abs{\nabla f}(x) := \limsup_{d(y,x) \rightarrow 0+} \frac{|f(y) - f(x)|}{d(x,y)} ~.
\]
(and we define it as 0 if $x$ is an isolated point - see \cite[pp. 184,189]{BobkovHoudre} for more details).
Some well known examples of functional inequalities include the Poincar\'e and Sobolev-Gagliardo-Nirenberg inequalities, but the one which will be of most interest to us in this work is the \emph{log-Sobolev inequality}, introduced by Gross \cite{GrossIntroducesLogSob} in the study of the Gaussian measure, which corresponds to the case $q=2$ below. The extension to the range $q \in [1,2]$ is due to Bobkov and Zegarlinski \cite{BobkovZegarlinski}.
\begin{dfn*}
$(\Omega,d,\mu)$ satisfies a $q$-log-Sobolev inequality ($q \in [1,2]$) if:
\begin{equation} \label{eq:LS-inq-def}
\exists D >0 \; \text{ s.t. } \; \forall f \in \F \;\;\;\;  D (Ent_\mu(|f|^q))^{1/q} \leq \norm{\abs{\nabla f}}_{L^q(\mu)} ~,
\end{equation}
where $Ent_\mu(g)$ denotes the entropy of a non-negative function $g$:
\[
 Ent_\mu(g) := \int g \log \brac{g / \int g d\mu} d\mu ~.
\]
The best possible constant $D$ above is denoted by $D_{LS_q} = D_{LS_q}(\Omega,d,\mu)$.
\end{dfn*}

Another way to measure the interplay between the metric $d$ and the measure $\mu$ is given by Transport-Entropy (or TE) inequalities, first introduced by Marton \cite{Marton86,Marton96}, and significantly developed by Talagrand \cite{TalagrandT2}. These compare between the cost of optimally transporting between $\mu$ and a second probability measure $\nu$ (with respect to some cost function $c : \Omega \times \Omega \rightarrow \Real_+$), and the relative entropy of $\nu$ with respect to $\mu$. The transportation cost, or Wasserstein distance, is defined as:
\[
W_c(\nu,\mu) := \inf \int_{\Omega \times \Omega} c(x,y)
d\pi(x,y) ~,
\]
where the infimum runs over the set $\M(\nu,\mu)$ of all probability measures $\pi$ on the
product space $\Omega \times \Omega$ with marginals $\nu$ and $\mu$.
We reserve the notation $W_p$ to denote $W_{d^p}^{1/p}$ ($p \geq 1$), which is known (e.g. \cite[Theorem 6.9]{VillaniOldAndNew}) to metrize the appropriate weak topology on the space of Borel probability measures $\mu$ on $(\Omega,d)$ having a finite $p$-th moment: $\int d(x,x_0)^p d\mu(x) < \infty$. Here $x_0$ is some (equivalently, any) fixed point in $\Omega$. The relative entropy, or Kullback--Leibler divergence, is defined for $\nu \ll \mu$ as:
\[
H(\nu | \mu) := Ent_\mu(\frac{d\nu}{d\mu}) = \int \log(\frac{d\nu}{d\mu}) d\nu ~,
\]
and $+\infty$ otherwise. An important example of a Transport-Entropy inequality is given by Talagrand's $T_2$ inequality, corresponding to the case $s=p=2$ below:
\begin{dfn*}
$(\Omega,d,\mu)$ satisfies a $(s,p)$ Transport-Entropy inequality ($p\geq 2, s \geq 1$) if:
\begin{equation} \label{eq:TE-inq-def}
\exists D >0 \; \text{ s.t. } \; \forall \text{ probability measure } \nu \;\;\; D W_s(\nu,\mu) \leq H(\nu | \mu)^{1/p} ~.
\end{equation}
The best possible constant $D$ above is denoted by $D_{TE_{s,p}} = D_{TE_{s,p}}(\Omega,d,\mu)$.
\end{dfn*}

The restriction to $q \in [1,2]$ and $p\geq 2$ above is necessary. Indeed, setting $f = 1+\eps g$ in (\ref{eq:LS-inq-def}) and letting $\eps \rightarrow 0$, one checks that the left-hand-side behaves like $\eps^{2/q}$ whereas the right-hand-side behaves like $\eps$. Similarly, setting $\nu = (1 + \eps g) \mu$ with $\int g d\mu = 0$ in (\ref{eq:TE-inq-def}), the right-hand-side behaves like $\eps^{2/p}$ whereas the left-hand-side behaves like $\eps$. It is however possible to extend these definitions to the range $q \geq 2$ and $p \in [1,2]$ by using appropriate modified log-Sobolev
and TE inequalities with cost function modified to be quadratic for small distances, in the spirit of Talagrand \cite{TalagrandT2}. To describe these variants, let us introduce the convex function $\varphi_p : \Real_+ \rightarrow \Real_+$, which is given by:
\begin{equation} \label{eq:def-varphi}
\varphi_p(x) :=  \frac{x^p}{p}  \;\;\;  \text{ if } p \geq 2 \;\;\; \text{and} \;\;\; \varphi_p(x) := \begin{cases} \frac{x^2}{2} & x \in [0,1] \\ \frac{x^p}{p} + \frac{1}{2} - \frac{1}{p} & x \in (1,\infty) \end{cases} \;\;\; \text{ if } p \in [1,2]  ~.
\end{equation}
Denoting by $\varphi_{*,q}(\lambda) := (\varphi_p)^*(\lambda) := \sup_{x \geq 0} \lambda x - \varphi_p(x)$ the Legendre transform of $\varphi_p$, where $q = p^* := p/(p-1)$, one checks that:
\begin{equation} \label{eq:def-varphi*}
\varphi_{*,q}(\lambda) :=  \frac{\lambda^q}{q}  \;\;\;  \text{ if } q \in [1,2] \;\;\; \text{and} \;\;\; \varphi_{*,q}(\lambda) := \begin{cases} \frac{\lambda^2}{2} & \lambda \in [0,1] \\ \frac{\lambda^q}{q} + \frac{1}{2} - \frac{1}{q} & \lambda \in (1,\infty) \end{cases} \;\;\; \text{ if } q \in [2,\infty]  ~.
\end{equation}

\begin{dfn*}
$(\Omega,d,\mu)$ satisfies a $q$-modified-log-Sobolev inequality ($q \in [1,\infty]$) if:
\begin{equation} \label{eq:mLS-inq-def}
\exists D >0 \; \text{ s.t. } \; \forall f \in \log \F \;\;\;\;  Ent_\mu(f^2) \leq \int f^2 \varphi_{*,q}\brac{\frac{1}{D}\frac{|\nabla f|}{|f|}} d\mu ~.
\end{equation}
The best possible constant $D$ above is denoted by $D_{mLS_q} = D_{mLS_q}(\Omega,d,\mu)$.
\end{dfn*}
Here $\log \F$ denotes the class of functions $f$ such that $\log(f^2) \in \F$, and $|\nabla f|/|f|$ should be understood as $|\nabla \log(f^2)|/2$.
Note that substituting $f = g^{q/2}$ above when $q \in [1,2]$, we see that $q$-modified and $q$-log-Sobolev inequalities coincide, with $D_{LS_q} = q^{1/q} D_{mLS_q}$.
The case $q = \infty$ ($p=1$) was first introduced by Bobkov and Ledoux \cite{BobkovLedouxModLogSobAndPoincare} and further studied by Bobkov--Gentil--Ledoux \cite{BobkovGentilLedoux}. The extension to the entire range $q \geq 2$ is due to Gentil--Guillin--Miclo \cite{GentilGuillinMiclo}.

\begin{dfn*}
$(\Omega,d,\mu)$ satisfies a $(\varphi_p,1)$ Transport-Entropy inequality ($p \geq 1$) if:
\begin{equation} \label{eq:varphi_p-TE-def}
W_{c_{\varphi_p,D}}(\nu, \mu) \leq H(\nu | \mu) \;\;\;  \forall \text{ probability measure } \nu  ~,
\end{equation}
where $c_{\varphi_p,D}$ denotes the cost function $c_{\varphi_p,D}(x,y) := \varphi_{p}(D d(x,y))$. The best possible constant $D$ above is denoted by $D_{TE_{\varphi_p,1}} = D_{TE_{\varphi_p,1}}(\Omega,d,\mu)$.
\end{dfn*}
Again, in the case $p\geq 2$, these are just obviously identical to the usual $(p,p)$ Transport-Entropy inequalities with $D_{TE_{(\varphi_p,1)}} = p^{1/p} D_{TE_{p,p}}$, so the novelty lies in the extension to the range $p \in [1,2]$. The case $p=1$ was first introduced by Talagrand \cite{TalagrandT2} in his study of the exponential measure on $\Real$, and further characterized in \cite{BobkovLedouxModLogSobAndPoincare,BobkovGentilLedoux} (see below). The entire range $p \in [1,2]$ has  been subsequently considered by various authors (see \cite[Chapter 22]{VillaniOldAndNew} and the references therein), and in particular by Gentil--Guillin--Miclo \cite{GentilGuillinMiclo}, who connected them to modified $q$-log-Sobolev inequalities.

\begin{rem}
Due to their non-homogeneous nature, various variants of $(\varphi_p,1)$ TE and $q$-modified-log-Sobolev inequalities have been used in the literature, with corresponding constants $D_{TE}$ and $D_{mLS}$ appearing in front of different terms. We stand behind our convention, since all of the constants $D$ above and throughout this work scale linearly with the metric, i.e. $D(\Omega,\lambda d,\mu) = D(\Omega,d, \mu) / \lambda$. In any case, it is easy to modify these variants into our form, by using the following easy to verify properties of $\varphi_p$:
\begin{eqnarray}
\label{eq:varphi_p-prop} p \in [1,2] & \Rightarrow & c \varphi_p(x) \geq \varphi_p(\min(c,1) x)  \;\;\; \forall x \geq 0  ~ ; \\
\label{eq:varphi*_q-prop} q \in [2,\infty] & \Rightarrow & C \varphi_{*,q}(\lambda) \leq \varphi_{*,q}(\max(\sqrt{C},1) \lambda ) \;\;\; \forall \lambda \geq 0 ~.
\end{eqnarray}
\end{rem}

Before proceeding, we mention a useful characterization of the case $p=1$ ($q = \infty$) obtained by Bobkov--Gentil--Ledoux \cite{BobkovGentilLedoux}: the $(\varphi_1,1)$ TE inequality is equivalent to the $\infty$-modified-log-Sobolev inequality, which in turn is equivalent (\cite{BobkovLedouxModLogSobAndPoincare}) to the well-known Poincar\'e inequality:
\[
\exists D > 0 \;\;\; D^2 \brac{\int f^2 d\mu - (\int f d\mu)^2} \leq \int |\nabla f|^2 d\mu \;\;\; \forall f \in \F ~.
\]
Denoting the best possible constant $D$ above by $D_{Poin} := D_{Poin}(\Omega,d,\mu)$, the equivalence is in the sense that:
\begin{equation} \label{eq:BGL-equiv}
D_{Poin} \simeq D_{mLS_\infty} \simeq D_{TE_{(\varphi_1,1)}} ~.
\end{equation}
We refer to \cite{BobkovLedouxModLogSobAndPoincare,BobkovGentilLedoux} for a more precise statement.

\medskip

As mentioned in the Introduction, it is known that functional and Transport-Entropy inequalities may be used to interpolate between isoperimetric and concentration inequalities. To dispense of unneeded generality, let us illustrate this hierarchy in an important family of examples.

\begin{dfn*}
$(\Omega,d,\mu)$ is said to satisfy a \emph{$p$-exponential isoperimetric inequality}, $p \in [1,\infty)$, if:
\[
\exists D>0 \;\;\; \I_{(\Omega,d,\mu)} \geq D \I_{(\Real,\abs{\cdot},\Gamma_p)} ~,
\]
where $\Gamma_p$ denotes the probability measure on $\Real$ with density $\exp(-|x|^p/p)/Z_p$ (and $Z_p$ is a normalization factor). We denote the best constant $D$ above by $D_{Iso_p}= D_{Iso_p}(\Omega,d,\mu)$. The case $p=2$ corresponds to the standard Gaussian measure on $\Real$, and is called a \emph{Gaussian isoperimetric inequality}.
\end{dfn*}

\begin{dfn*}
$(\Omega,d,\mu)$ is said to satisfy a \emph{$p$-exponential concentration inequality}, if:
\begin{equation} \label{eq:Exp-p}
\exists D>0 \;\;\; \K_{(\Omega,d,\mu)}(r) \geq -1 + (D r)^p \;\;\; \forall r \geq 0 ~.
\end{equation}
We denote the best constant $D$ above by $D_{Con_p} = D_{Con_p}(\Omega,d,\mu)$.
\end{dfn*}
\begin{rem}
The purpose of the $-1$ above is to emphasize that this only provides information on the behaviour of $\K$ in the large, and could be replaced by any constant strictly smaller than $\log 2$.
\end{rem}
\begin{rem} \label{rem:iso_p}
It is known (see \cite{BobkovExtremalHalfSpaces},\cite{BobkovZegarlinski}) that given $p \in [1,\infty)$, $\tilde{\I}_{(\Real,\abs{\cdot},\Gamma_p)}(v) \simeq_p v \log^{1/q} 1/v$ uniformly on $v \in [0,1/2]$, with $q = p^*$  (the lower bound on $\tilde{\I}_{(\Real,\abs{\cdot},\Gamma_p)}$ is universal, but the upper bound will depend on $p$). The space $(\Real,\abs{\cdot},\Gamma_p)$ is a prototype for all of the inequalities mentioned above, and it is known that it satisfies $D_{Con_p}, D_{TE_{\varphi_p,1}} , D_{mLS_q} \simeq_p 1$ for all $p \in [1,\infty)$.
\end{rem}

\subsection{The Hierarchy}

As already mentioned, it is known and easy to see (e.g. \cite[Proposition 1.7]{EMilmanSodinIsoperimetryForULC}) that an isoperimetric inequality always implies a concentration inequality, simply by ``integrating'' along the isoperimetric differential inequality. Namely, if $\gamma : \Real_+ \rightarrow \Real_+$ is a continuous function, then:
\begin{equation} \label{eq:i-c}
\begin{array}{c}
\tilde{\I}(v) \geq v \gamma(\log 1/v) \;\;\; \forall v\in[0,1/2] \\
\Downarrow \\
\K(r) \geq \alpha(r) \;\;\; \forall r \geq 0 \;\;\; \textrm{where} \;\; \alpha^{-1}(x) = \int_{\log 2}^x \frac{dy}{\gamma(y)} ~.
\end{array}
\end{equation}

It is immediate to check that (\ref{eq:i-c}) yields the following implication ($p\geq 1$):
\begin{equation} \label{eq:p-isop-p-conc}
\text{$p$-exponential isoperimetric inequality } \Rightarrow \text{ $p$-exponential concentration inequality} ~.
\end{equation}
Using the same notation as above, the following known series of implications clearly interpolates between these two extremes (for $p \geq 2$ and $q = p^*$):
\begin{multline} \label{eq:hierarchy}
\text{$p$-exponential isoperimetric inequality } \Rightarrow \text{ $q$-log-Sobolev inequality } \Rightarrow \\
\text{$(p,p)$ Transport-Entropy inequality } \Rightarrow \text{ $p$-exponential concentration inequality} ~.
\end{multline}
More precisely, given $p \in [2,\infty)$, there exist constants $C_1,C_2,C_3 > 0$ so that:
\[
D_{Iso_p} \leq C_1 D_{LS_{q}} \leq C_2 D_{TE_{p,p}} \leq C_3 D_{Con_p} ~.
\]
It is also possible to extend the above hierarchy to the range $p \in [1,2)$, but this is slightly less known and requires further explanation:
\begin{multline} \label{eq:hierarchy-2}
\text{$p$-exponential isoperimetric inequality } \Rightarrow \text{ $q$-modified-log-Sobolev inequality } \Rightarrow \\
\text{$(\varphi_p,1)$ Transport-Entropy inequality } \Rightarrow \text{ $p$-exponential concentration inequality} ~.
\end{multline}
More precisely, there exist constants $C_1,C_2,C_3 > 0$ so that for any $p \in [1,2]$ (and setting $q = p^*$):
\[
D_{Iso_p} \leq C_1 D_{mLS_{q}} \leq C_2 D_{TE_{\varphi_p,1}} \leq C_3 D_{Con_p} ~.
\]

The third implication in (\ref{eq:hierarchy}) or (\ref{eq:hierarchy-2}) for general $p \geq 1$ is due to Marton \cite{Marton86,Marton96}. The fact that a $q$-modified-log-Sobolev inequality implies $p$-exponential concentration follows from the ``Herbst argument" (see \cite{Ledoux-Book,BobkovZegarlinski}).

The second implication in (\ref{eq:hierarchy}) or (\ref{eq:hierarchy-2}) for $p=q=2$ is due to Otto and Villani \cite{OttoVillaniHWI} in the manifold setting. This was subsequently given a different proof in the Euclidean setting using the Hamilton-Jacobi equation by Bobkov--Gentil--Ledoux \cite{BobkovGentilLedoux}, who also established the implication (along with its converse) for $p=1$. The implication was then extended to the range $p \in [1,2]$ and $p \geq 2$ in \cite{GentilGuillinMiclo}, \cite{BloghEtAl-qlogSob-pTE}, respectively. See also \cite[Theorem 22.28]{VillaniOldAndNew},\cite{WangTEInqsOnPathSpaces,LottVillaniHamiltonJacobi,Wang2008qLogSobAndTEInqsViaSuperPoincare,GozlanTEEquivDimFreeConc} for generalizations, extensions to more general measure-metric spaces, and further techniques.

The first implication in (\ref{eq:hierarchy}) or (\ref{eq:hierarchy-2}) for $p=q=2$ is due to M. Ledoux \cite{LedouxBusersTheorem}, later refined by Beckner (see \cite{LedouxLectureNotesOnDiffusion}).
In the general $p\geq 2$ case, it is due to Bobkov and Zegarlinski \cite{BobkovZegarlinski} (see also \cite{EMilmanSodinIsoperimetryForULC}).
For $p=1$, it follows from the characterization (\ref{eq:BGL-equiv}) and Cheeger's inequality \cite{CheegerInq,MazyaCheegersInq1}, which implies that $D_{Poin} \geq c D_{Iso_1}$ for some universal constant $c>0$. In the general $p \in [1,2]$ case, this implication is due to A. Kolesnikov \cite[Theorem 1.1]{KolesnikovModLogSobInqs}, but this requires some further explanation. Kolesnikov showed (in particular) that given $p \in (1,2]$, if $(\Real^n,\abs{\cdot},\mu)$ satisfies a $p$-exponential isoperimetric inequality (with constant $D_{Iso_p}$), then:
\begin{equation} \label{eq:Kol}
\exists D >0 \; \text{ s.t. } \; \forall f \in \log \F \;\;\;\;  Ent_\mu(f^2) \leq D \int f^2 \varphi_{*,q}\brac{C \frac{|\nabla f|}{|f|}} d\mu ~,
\end{equation}
with $C=1$, where the constant $D$ depends on $D_{Iso_p}$, $p$ and a lower bound on a variant of the Poincar\'e constant $D_{Poin'}$ (which is equivalent to it up to constants, see e.g. \cite[Lemma 2.1]{EMilman-RoleOfConvexity}). By Cheeger's inequality as above, we have $D_{Poin'} \geq c D_{Iso_1} \geq c' D_{Iso_p}$ for some universal constants $c,c'>0$, thus removing $D_{Poin'}$ from the list of parameters. A further careful inspection of the proof reveals that the estimate on $D$ does not depend on any integrability properties of the measure $\mu$, and hence dimension independent. The most delicate part is to notice that the estimate is actually uniform in $p \in (1,2]$, and hence extends to the case $p=1$, if one is willing to use a different universal constant $C>1$ in (\ref{eq:Kol}); this may also be seen from the simpler tightening procedure suggested by Barthe and Kolesnikov \cite[Theorem 2.4]{BartheKolesnikov}, using $C=2$. It follows that if $D_{Iso_p} = 1$, then (\ref{eq:Kol}) holds for some universal $D,C>0$ uniformly in $p \in [1,2]$.
Using (\ref{eq:varphi*_q-prop}), it follows that if $D_{Iso_p} = 1$ then $D_{mLS_q} \geq (C \max(\sqrt{D},1))^{-1}$, for all $p \in [1,2]$. But since our constants scale linearly in the metric, it must follow that $D_{mLS_q} \geq (C \max(\sqrt{D},1))^{-1} D_{Iso_p}$, concluding our claim. We also note that Kolesnikov's method is not particular to Euclidean space, and extends to the Riemannian-manifold-with-density setting.

\subsection{Reversing the Hierarchy} \label{subsec:pre-reverse}

In general, it is known that it is not possible to reverse any of the implications in (\ref{eq:hierarchy}) and (\ref{eq:hierarchy-2}), at least not for general $p$ in the corresponding range. That the first implication cannot be reversed follows for instance from known criteria for Hardy-type inequalities on $(\Real,\abs{\cdot},\mu)$ \cite{MuckenhouptHardyInq,MazyaBook,BobkovGotzeLogSobolev,BobkovZegarlinski,BartheRobertoModLogSobOnR}.
As for the second implication, this was settled by Cattiaux and Guillin \cite{CattiauxGuillinT2InqWeakerThanLogSob} in the case $p=q=2$; the extension to the case $q \geq 2, p \in (1,2]$ may be obtained by combining the results of Barthe--Roberto \cite{BartheRobertoModLogSobOnR} and Gozlan \cite{GozlanTEInqsOnR}, which respectively characterize in that range $q$-modified-log-Sobolev inequalities and $(\varphi_p,1)$ TE inequalities on the real line. As already mentioned, in the case $p=1$, $\infty$-modified-log-Sobolev and $(\varphi_1,1)$ TE inequalities are actually known to be equivalent \cite{BobkovGentilLedoux}.
The third implication is certainly false in general. Indeed, note that any compactly supported measure always satisfies a concentration inequality, since $\K(r) = +\infty$ for all $r > \Delta$, where $\Delta<\infty$ denotes the diameter of the support. On the other hand, as remarked in \cite{DjelloutGuillinWu}, since $(\varphi_p,1)$ TE inequalities for $p \in [1,2]$ imply a Poincar\'e inequality (see Section \ref{sec:TE-equiv}), it follows that when the support is in addition disconnected, the third implication cannot be reversed; a similar argument works for $p > 2$.

Some partial reversal results have been obtained under some additional assumptions, typically involving convexity. Under our convexity assumptions, it was shown by Ledoux \cite{LedouxSpectralGapAndGeometry} (extending Buser \cite{BuserReverseCheeger}) that a Poincar\'e inequality implies back a $1$-Exponential (or Cheeger type) isoperimetric inequality, up to universal constants. The semi-group method developed by Ledoux and Bakry--Ledoux \cite{BakryLedoux} allowed reversing general functional inequalities with a $\norm{|\nabla f|}_{L_2(\mu)}$ term under our semi-convexity assumptions, and in particular applies to the log-Sobolev inequality. This method was extended to handle general $\norm{|\nabla f|}_{L_q(\mu)}$ terms in \cite{EMilmanRoleOfConvexityInFunctionalInqs, EMilmanMazya}, and in particular applies to $q$-log-Sobolev inequalities. Under our semi-convexity assumptions, it was shown by Otto--Villani \cite{OttoVillaniHWI} (see also \cite{BobkovGentilLedoux} for a semi-group proof) that a strong-enough $(2,2)$ TE inequality implies back a log-Sobolev inequality, with dimension independent estimates. Under our convexity assumptions, this was extended to the general $p \in [1,2]$ case by Gentil--Guillin--Miclo \cite{GentilGuillinMiclo}; these authors comment that it would be possible to extend their result to handle the $\kappa$-semi-convexity assumptions ($\kappa > 0$), but in view of the spirit of our results, we are not certain this is so. This was also extended to the $p > 2$ case by Wang \cite{Wang2008qLogSobAndTEInqsViaSuperPoincare} in the Riemannian setting (but with dimension dependent bounds), and by Balogh et al. \cite{BloghEtAl-qlogSob-pTE} in a more general one (at least in the $\kappa=0$ case, but their proof should generalize to arbitrary $\kappa > 0$).

We do not know whether previous attempts have been considered to deduce isoperimetric, functional or TE inequalities from concentration inequalities. A weaker variant has been considered by many authors, including Wang \cite{WangIntegrabilityForLogSob,WangImprovedIntegrabilityForLogSob}, Chen--Wang \cite{ChenWangOptimalConstantInSubGaussianImpliesLogSob}, Bobkov \cite{BobkovGaussianIsoLogSobEquivalent}, Barthe \cite{BartheIntegrabilityImpliesIsoperimetryLikeBobkov}, Barthe--Kolesnikov \cite{BartheKolesnikov}, where an integrability condition of the measure, together with $\kappa$-semi-convexity assumptions, was shown to guarantee an appropriate isoperimetric or functional inequality. Unfortunately, as explained in \cite{EMilmanGeometricApproachPartI}, these types of results will unavoidably always yield dimension-dependent estimates.

However, it was shown in our previous work \cite{EMilmanGeometricApproachPartI} using tools from Riemannian Geometry (see also Ledoux \cite{LedouxConcentrationToIsoperimetryUsingSemiGroups} for an alternative approach), that under our semi-convexity assumptions, general concentration inequalities imply back their isoperimetric counterparts, with quantitative estimates which \emph{do not depend} on the dimension of the underlying manifold. This implies in particular that all of the tiers in our hierarchies are equivalent (up to universal constants) in this case. The precise formulation is as follows:

\begin{thm}[\cite{EMilmanGeometricApproachPartI}] \label{thm:main-equiv}
Let $\kappa \geq 0$ and let $\alpha: \Real_+ \rightarrow \Real \cup \set{+\infty}$ denote an increasing continuous function so that:
\begin{equation} \label{eq:alpha-cond}
\exists \delta_0 > 1/2 \;\;\; \exists r_0 \geq 0 \;\;\; \forall r \geq r_0 \;\;\; \alpha(r) \geq \delta_0 \kappa r^2 ~.
\end{equation}
Then under our $\kappa$-semi-convexity assumptions, the concentration inequality:
\[
\K(r) \geq \alpha(r) \;\;\; \forall r \geq 0
\]
implies the following isoperimetric inequality:
\begin{equation} \label{eq:i-2}
\tilde{\I}(v) \geq \min(c_{\delta_0} \; v \gamma(\log 1/v),c_{\kappa,\alpha}) \;\;\; \forall v \in [0,1/2] \;\;\; , \;\;\; \textrm{where} \;\; \gamma(x) = \frac{x}{\alpha^{-1}(x)} ~,
\end{equation}
and $c_{\delta_0},c_{\kappa,\alpha}>0$ are constants depending solely on their arguments. Moreover, if $\kappa = 0$, we may take $c_{\delta_0} = c$ and $c_{0,\alpha} = \frac{c}{4} \gamma(\log 4)$ for some universal constant $c>0$. If $\kappa > 0$, the dependence of $c_{\kappa,\alpha}$ on $\alpha$ may be expressed only via $\delta_0$ and $\alpha(r_0)$.
\end{thm}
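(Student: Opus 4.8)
The plan is to reduce to a smooth, well-behaved situation, and then run a Riemannian-geometric argument in which the Bakry--\'Emery lower bound $Ric_g + Hess_g\psi \geq -\kappa g$ is the engine converting the (a priori far weaker) concentration hypothesis $\K \geq \alpha$ into a pointwise lower bound on the isoperimetric profile $\I$.

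\smallskip
\noindent\emph{Reduction to the smooth case.} By definition of the $\kappa$-semi-convexity assumptions, $\mu$ is a total-variation limit of measures $\mu_m$ satisfying the smooth ones. A concentration bound $\K \geq \alpha$ is essentially stable under small total-variation perturbations --- it only constrains enlargements of sets of measure $\geq 1/2$ --- and the isoperimetric profile is lower semicontinuous under such convergence in the sense needed to pass the conclusion (\ref{eq:i-2}) back from the $\mu_m$ to $\mu$. So one may assume $d\mu = \exp(-\psi)\,dvol_M|_S$ with $\psi \in C^2(S)$, $S$ geodesically convex, and $Ric_g + Hess_g\psi \geq -\kappa g$ on $S$.

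\smallskip
\noindent\emph{The geometric engine.} Fix $v \in (0,1/2]$. Invoking existence and regularity theory for isoperimetric minimizers on weighted manifolds (with an exhaustion/truncation argument for non-compact $S$), let $A$ realize $\mu(A) = 1/2$, $\mu^+(A) = \I(1/2)$, with smooth reduced boundary of constant generalized mean curvature away from a $\mu^+$-negligible singular set. Let $A_{-s}$ denote the level sets of the signed distance to $\partial A$ (inward), so that $u(s) := \mu(A_{-s})$ satisfies $u(0^+) = 1/2$ and $u'(s) = -\mu^+(A_{-s})$ for a.e.\ $s$. Two facts --- both consequences of the curvature bound via the second variation of weighted area and the Bochner--Lichnerowicz formula (equivalently, the Riccati inequality $H_\mu' \geq -\kappa$ for the generalized mean curvature along normal geodesics, the dimensional term dropping in the $N=\infty$ convention) --- drive the argument:
\begin{enumerate}
\item the profile satisfies $\I\,\I'' \leq \kappa$ on $(0,1)$ in the barrier sense; in particular, when $\kappa = 0$, $\I$ is concave with $\I(0^+)=0$, so $\I(w) \leq \frac{\I(v)}{v}\,w$ for all $w \in [v,1/2]$;
\item the flow $A_{-s}$ obeys a differential inequality forced by the curvature bound which keeps $\mu^+(A_{-s})$ comparable to $\I(\mu(A_{-s}))$: $\mu^+(A_{-s}) \leq C\,e^{\kappa s^2/2}\,\I(\mu(A_{-s}))$ for a universal $C$, hence $u'(s) \geq -C\,e^{\kappa s^2/2}\,\I(u(s))$.
\end{enumerate}

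\smallskip
\noindent\emph{Feeding in concentration and concluding.} Applying $\K \geq \alpha$ to $\Omega \setminus A$ (of measure $1/2$) gives $\mu(A_{-s}) \leq \exp(-\alpha(s))$ for all $s \geq 0$; so if $s_v$ is the time at which $u(s_v) = v$, then $\alpha(s_v) \leq \log(1/v)$, i.e.\ $s_v \leq \alpha^{-1}(\log 1/v)$. Integrating fact (2) from $0$ to $s_v$ yields $\int_v^{1/2} \frac{dw}{\I(w)} \leq C \int_0^{s_v} e^{\kappa t^2/2}\,dt$. When $\kappa = 0$ this reads $\int_v^{1/2}\frac{dw}{\I(w)} \leq C s_v \leq C\,\alpha^{-1}(\log 1/v)$, and combining with the concavity estimate of fact (1), $\int_v^{1/2}\frac{dw}{\I(w)} \geq \frac{v}{\I(v)}\log\frac{1}{2v}$, gives $\I(v) \geq c\, v\,\log(1/v)/\alpha^{-1}(\log 1/v) = c\, v\,\gamma(\log 1/v)$ for $v$ bounded away from $1/2$, the value near $v=1/4$ being handled directly --- this is the stated $\kappa = 0$ conclusion, including $c_{0,\alpha} = \frac{c}{4}\gamma(\log 4)$. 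For $\kappa > 0$, the Riccati comparison contributes the factor $e^{\kappa t^2/2}$; since under (\ref{eq:alpha-cond}) one has $\kappa s_v^2 \leq \frac{1}{\delta_0}\log(1/v)$, the right-hand side $\int_0^{s_v} e^{\kappa t^2/2}\,dt$ stays of order $\alpha^{-1}(\log 1/v)$ up to a factor $v^{-1/(2\delta_0)}$, which is \emph{sub}-linear in $1/v$ precisely because $\delta_0 > 1/2$; and fact (1) yields only a quantitative quasi-concavity of $\I$, valid once $\I$ is known to be bounded below. The net effect is that the clean bound survives for $v$ above a threshold depending only on $\delta_0$ and $\alpha(r_0)$, below which one retains just the constant $c_{\kappa,\alpha}$ --- hence the $\min$ in (\ref{eq:i-2}).

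\smallskip
\noindent\emph{Main obstacle.} The crux is fact (2) --- that the distance-level-set flow off an isoperimetric minimizer stays comparable, up to a universal constant and the $e^{\kappa s^2/2}$ correction, to the profile itself --- together with its interplay with fact (1). This is where the delicate regularity theory of weighted isoperimetric minimizers, the treatment of the singular set, the non-compactness of $S$, and above all the requirement that every constant be independent of the dimension $n$ must be confronted simultaneously. The second genuinely technical point is the $\kappa>0$ bookkeeping: isolating the exact $\frac12\kappa s^2$ contribution of the Bochner/Riccati comparison at each scale, so that the dependence of $c_{\kappa,\alpha}$ on $\alpha$ passes only through $\delta_0$ and $\alpha(r_0)$.
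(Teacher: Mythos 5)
First, a point of context: this paper does not prove Theorem \ref{thm:main-equiv} --- it is imported wholesale from \cite{EMilmanGeometricApproachPartI} --- so there is no internal proof to compare against; I am evaluating your sketch as a reconstruction of that external argument. Your framework is the right one (reduction to the smooth case, Morgan-type existence/regularity of weighted minimizers, a Heintze--Karcher/Riccati comparison producing the $e^{\kappa s^2/2}$ Jacobian factor, feeding in concentration via $\mu(A_{-s}) \leq e^{-\alpha(s)}$, and the role of $\delta_0 > 1/2$ in beating that factor). But the argument as written has a genuine gap at its center: your fact (2), $\mu^+(A_{-s}) \leq C\,e^{\kappa s^2/2}\,\I(\mu(A_{-s}))$, compares the perimeter of the inner parallel set to the \emph{profile evaluated at the current measure} $u(s)=\mu(A_{-s})$. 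What the curvature bound actually delivers (via the normal map off the minimizer) is a comparison to the perimeter of the \emph{original} minimizer, $\mu^+(A_{-s}) \leq \mu^+(A)\,e^{\pm H s + \kappa s^2/2}$. Upgrading this to your fact (2) requires knowing that $\I(u(s)) \geq c\,\mu^+(A)\,e^{-(\dots)}$, i.e.\ a lower bound on the profile at intermediate levels --- which is precisely what the theorem is trying to establish. The parallel sets $A_{-s}$ are not near-minimizers for their own measures, so fact (2) is the conclusion in disguise, not a consequence of the Bochner/Riccati comparison; and without it the key integrated inequality $\int_v^{1/2} dw/\I(w) \leq C s_v$ does not follow (the trivial direction $-u'(s)=\mu^+(A_{-s})\geq \tilde{\I}(u(s))$ gives the reverse inequality and hence no information). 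The actual proof in \cite{EMilmanGeometricApproachPartI} instead works level by level: it takes the minimizer at measure $v$ itself, bounds $\I(v)=\mu^+(A)$ below by $(\mu(A)-\mu(A_{-r}))/\int_0^r e^{\kappa s^2/2 \mp H_\mu s}\,ds$ (or the outer analogue), and controls the constant generalized mean curvature $H_\mu$ through a separate differential inequality for the profile --- this is where the real work lies.

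The second problem is the $\kappa>0$ bookkeeping at the end. You concede a multiplicative loss of order $v^{-1/(2\delta_0)}$ from $\int_0^{s_v} e^{\kappa t^2/2}\,dt$ and claim it is harmless because "the clean bound survives for $v$ above a threshold, below which one retains just the constant $c_{\kappa,\alpha}$." This reverses the roles of the two terms in the $\min$ of (\ref{eq:i-2}): since $v\gamma(\log 1/v)\to 0$ as $v\to 0$ (here $\gamma(x)\sim\sqrt{\delta_0\kappa x}$), the minimum equals $c_{\delta_0}\,v\gamma(\log 1/v)$ for all sufficiently small $v$, so the theorem demands the full bound $\tilde{\I}(v)\geq c_{\delta_0}\,v\gamma(\log 1/v)$ in exactly the regime $v\to 0$; the constant $c_{\kappa,\alpha}$ only caps the claim for moderate $v$ (near $1/2$), where $\gamma(\log 1/v)$ may blow up because $\alpha^{-1}$ can vanish near $\log 2$. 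A loss of $v^{-1/(2\delta_0)}$ therefore cannot be absorbed: it degrades the small-$v$ bound to order $v^{1+1/(2\delta_0)}$, strictly weaker than claimed. Eliminating this loss (rather than merely noting it is "sub-linear in $1/v$") is one of the delicate points of the actual proof, and your sketch does not address it.
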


In this work, we focus on some of the consequences of Theorem \ref{thm:main-equiv} to the study of Isoperimetric, Functional and Transport-Entropy inequalities. The three main motives appearing in this work are:
\begin{itemize}
\item Under our convexity assumptions, both hierarchies (\ref{eq:hierarchy}) and (\ref{eq:hierarchy-2}) may be reversed.
\item Under our $\kappa$-semi-convexity assumptions, the hierarchy (\ref{eq:hierarchy}) for $p>2$ may be reversed.
\item Under our $\kappa$-semi-convexity assumptions, the hierarchy (\ref{eq:hierarchy}) for $p=2$ may be reversed, if a strong-enough
concentration inequality is satisfied.
\end{itemize}
By going up and down the hierarchies, we deduce the various announced results.

\section{Stability with respect to $\snorm{\frac{d \mu_2}{d \mu_1}}_{L^\infty}$} \label{sec:stab1}

We start by deducing several stability results with respect to a rather restrictive notion of proximity of $\mu_2$ to $\mu_1$, given by $\snorm{\frac{d \mu_2}{d \mu_1}}_{L^\infty}$ (assuming that $\mu_2 \ll \mu_1$).

\subsection{Concentration Inequalities}

The main observation behind the contents of this section is the following elementary:

\begin{lem} \label{lem:conc-going-down}
Let $\mu_1$,$\mu_2$ denote two probability measures on a common metric space $(\Omega,d)$, and assume that:
\begin{equation} \label{eq:going-down}
\norm{\frac{d \mu_2}{d \mu_1}}_{L^\infty} \leq \exp(D) ~.
\end{equation}
Let $\K_i = \K_{(\Omega,d,\mu_i)}$ denote the corresponding log-concentration profiles,
and assume that $\K_1 \geq \alpha_1$ where $\alpha_1 : \Real_+ \rightarrow \Real_+$ is strictly increasing and continuous. Then:
\[
 \K_2(r + r_1) \geq \alpha_1(r) - D \;\;\; \forall r > 0 ~,~ \text{where} ~ r_1 := \alpha_1^{-1}(\log 2 + D) ~.
\]
\end{lem}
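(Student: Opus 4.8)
The plan is to transfer a concentration inequality from $\mu_1$ to $\mu_2$ by exploiting the one-sided $L^\infty$ bound (\ref{eq:going-down}), which says that $\mu_2(B) \leq \exp(D)\,\mu_1(B)$ for every Borel set $B$. The subtlety is that a concentration inequality is a statement about the enlargement of a set $A$ with $\mu(A) \geq 1/2$, and this anchoring at measure $1/2$ behaves differently for $\mu_1$ and $\mu_2$; so one cannot directly compare $\K_2$ and $\K_1$, and a shift $r_1$ in the radius is unavoidable. First I would fix a Borel set $A$ with $\mu_2(A) \geq 1/2$ and aim to bound $1 - \mu_2(A^d_{r+r_1})$ in terms of $\alpha_1$. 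The idea is to use the first $r_1$ units of enlargement to pass from "$\mu_2$-large" to "$\mu_1$-large", and then apply the hypothesis $\K_1 \geq \alpha_1$ to the enlarged set over the remaining radius $r$.

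The key steps, in order. First, observe that $\mu_1(A) \geq \exp(-D)\mu_2(A) \geq \exp(-D)/2$ — this is too small to apply the $\K_1$ bound directly, so instead consider the enlargement $A^d_{r_1}$. Its complement $C := \Omega \setminus A^d_{r_1}$ has $\mu_1(C) = 1 - \mu_1(A^d_{r_1}) \leq \exp(D)(1 - \mu_2(A^d_{r_1}))$... — actually the cleaner route is: apply the $\K_1$ concentration inequality to the set $A$ itself. Since $\mu_1(A) \ge \exp(-D)/2$ may be below $1/2$, I instead apply it to $A' := \Omega \setminus (\Omega \setminus A)^d_{r_1}$ type constructions; more simply, use monotonicity: because $\mu_2(A) \geq 1/2$, the complement $B := \Omega \setminus A$ satisfies $\mu_2(B) \leq 1/2$, hence $\mu_1(B) \leq \exp(D)\mu_2(B) \leq \exp(D)/2$. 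Now enlarge: I claim $\mu_1(B^d_{r_1}) \geq 1/2$, because otherwise $\Omega \setminus B^d_{r_1}$ would be a $\mu_1$-large set whose $r_1$-enlargement misses $B$, forcing $1 - \mu_1((\Omega\setminus B^d_{r_1})^d_{r_1}) \geq \mu_1(B)$, i.e. $\exp(-\alpha_1(r_1)) \geq \mu_1(B)$ — but this gives only $\mu_1(B) \le \exp(-\alpha_1(r_1))$, which is the wrong direction. Let me instead run the argument on $A$ directly.

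The correct shape: since $1 - \mu_1(A) = \mu_1(\Omega\setminus A) \le \exp(D)(1-\mu_2(A)) \le \exp(D)/2$, the set $A$ need not be $\mu_1$-large. But its enlargement is: if $\mu_1(A^d_{r_1}) < 1/2$ then applying $\K_1\ge\alpha_1$ to the $\mu_1$-large set $\Omega \setminus A^d_{r_1}$ would give, for the point set $A$ (which is disjoint from $(\Omega\setminus A^d_{r_1})^d_{r_1} \supseteq$ nothing forcing a contradiction) — the cleanest valid claim is: for any $r_1 > 0$ and any Borel set $E$, $\mu_1(E) > \exp(-\alpha_1(r_1))$ implies $\mu_1(E^d_{r_1}) > 1/2$, since otherwise $\Omega\setminus E^d_{r_1}$ is $\mu_1$-large, disjoint from $E$, and its $r_1$-enlargement still misses $E$, yielding $\mu_1(E)\le 1-\mu_1((\Omega\setminus E^d_{r_1})^d_{r_1})\le \exp(-\alpha_1(r_1))$, a contradiction. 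Apply this with $E = A$: we have $\mu_1(A) \ge \exp(-D)/2 = \exp(-(D+\log 2)) = \exp(-\alpha_1(r_1))$ by the definition $r_1 = \alpha_1^{-1}(\log 2 + D)$, so $\mu_1(A^d_{r_1}) \ge 1/2$ (with strictness handled by a limiting argument). Now apply $\K_1 \ge \alpha_1$ to the $\mu_1$-large set $A^d_{r_1}$: for every $r > 0$,
\[
1 - \mu_1\big((A^d_{r_1})^d_r\big) \leq \exp(-\alpha_1(r)).
\]
Since $(A^d_{r_1})^d_r \subseteq A^d_{r_1 + r}$, and applying (\ref{eq:going-down}) to the complement,
\[
1 - \mu_2(A^d_{r_1+r}) \;\leq\; \exp(D)\,\big(1 - \mu_1(A^d_{r_1+r})\big) \;\leq\; \exp(D)\exp(-\alpha_1(r)) \;=\; \exp\big(-(\alpha_1(r) - D)\big),
\]
which is exactly $\K_2(r + r_1) \geq \alpha_1(r) - D$, as desired.

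**Main obstacle.** The only real delicacy is the "enlargement of a moderately-large set is $1/2$-large" step and the matching of the constant: one must choose $r_1$ precisely as $\alpha_1^{-1}(\log 2 + D)$ so that $\exp(-\alpha_1(r_1)) = \exp(-D)/2$ is the exact threshold that $\mu_1(A)$ clears, and then carefully handle the non-strict versus strict inequalities (the definition of $\K$ uses $\mu(A)\ge 1/2$ with a $\liminf$-type enlargement, so a small-$\eps$ limiting argument, or working with $A^d_{r_1+\eps}$ and letting $\eps \to 0$, closes the gap). Everything else — monotonicity of enlargements, the semigroup-like inclusion $(A^d_s)^d_t \subseteq A^d_{s+t}$, and the pushforward-type estimate from (\ref{eq:going-down}) — is routine.
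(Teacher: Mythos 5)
Your argument is correct and follows essentially the same route as the paper's proof: use the density bound to get $\mu_1(A)\geq \exp(-(\log 2+D))$, enlarge by $r_1$ to reach a $\mu_1$-large set (the contrapositive/"reversed" form of concentration, which you prove and the paper merely asserts), apply $\K_1\geq\alpha_1$, and transfer back to $\mu_2$ via the density bound again. The only difference is cosmetic: the paper handles the strict-versus-non-strict boundary case by passing to the closure $\overline{A_{r_1}}$, whereas you invoke a small-$\eps$ limiting argument; both work.
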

\begin{proof}
Recall that $\K_1 \geq \alpha_1$ just means that:
\begin{equation} \label{eq:conc}
\mu_1(B) \geq 1/2 \;\; \Rightarrow \;\; 1 - \mu_1(B_r) \leq \exp(-\alpha_1(r)) ~,
\end{equation}
which is easily seen to be equivalent to:
\begin{equation} \label{eq:conc-reversed}
\mu_1(A) > \exp(-\alpha_1(r)) \;\; \Rightarrow \;\; \mu_1(A_r) > 1/2 ~.
\end{equation}
Now let $A$ denote a Borel subset of $\Omega$ with $\mu_2(A) \geq 1/2$. The condition (\ref{eq:going-down}) implies that $\mu_1(A) \geq \exp(-(\log2 + D))$. It follows from (\ref{eq:conc-reversed}) and the definition of $r_1$ that $\mu_1(\overline{A_{r_1}}) \geq 1/2$. Applying (\ref{eq:going-down}) and (\ref{eq:conc}) once again, we deduce that:
\[
 \exp(-D) \mu_2(\Omega\setminus A_{r+r_1}) \leq \mu_1(\Omega \setminus A_{r+r_1}) \leq \exp(-\alpha_1(r)) ~,~ \forall r>0 ~.
\]
Recalling the definition of $\K_2$, the desired assertion follows.
\end{proof}

\subsection{Isoperimetric Inequalities}

\begin{thm} \label{thm:iso-stability}
Let $\mu_1$,$\mu_2$ denote two probability measures on a common Riemannian manifold $(M,g)$ so that $\mu_2 \ll \mu_1$, and assume that (\ref{eq:going-down}) holds.
 Assume that our $\kappa$-semi-convexity assumptions are satisfied for $(M,g,\mu_2)$ ($\kappa \geq 0$), and let $\gamma_1 : [\log 2, \infty) \rightarrow \Real_+$ denote a continuous positive function so that:
\begin{equation} \label{eq:gamma-growth}
 \exists \delta_0 > 1/2 \;\;\; \exists x_0 \geq \log 2 \;\;\; \forall x \geq x_0 \;\;\; \gamma_1(x) \geq 2 \sqrt{\delta_0 \kappa x} ~.
\end{equation}
Let $\I_i = \I_{(M,g,\mu_i)}$ denote the corresponding isoperimetric profiles. If $(M,g,\mu_1)$ satisfies the isoperimetric inequality:
\[
 \tilde{\I}_1(v) \geq v \gamma_1(\log 1/v) \;\;\; \forall v \in [0,1/2] ~,
\]
then $(M,g,\mu_2)$ satisfies the following isoperimetric inequality:
\[
 \tilde{\I}_2(v) \geq \min(c_{\delta_0} v \gamma_2(\log 1/v), c_{\kappa,\gamma_1,D}) \;\;\; \forall v \in [0,1/2] ~,
\]
where $\gamma_2 : [\log 2, \infty) \rightarrow \Real_+$ is defined as:
\[
\gamma_2(x) := \frac{x}{\int_{\log 2}^{x + D} \frac{dy}{\gamma_1(y)} } ~,
\]
and $c_{\delta_0}, c_{\kappa,\gamma_1,D} > 0$ depend solely on their arguments.
\end{thm}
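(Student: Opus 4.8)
The plan is to run the three-step strategy advertised in the introduction: descend from the isoperimetric inequality for $\mu_1$ to a concentration inequality for $\mu_1$; transfer the concentration to $\mu_2$ using the elementary Lemma~\ref{lem:conc-going-down}; and ascend back to an isoperimetric inequality for $\mu_2$ using Theorem~\ref{thm:main-equiv}, which is available since $(M,g,\mu_2)$ satisfies our $\kappa$-semi-convexity assumptions. First I would feed the hypothesis $\tilde{\I}_1(v) \geq v \gamma_1(\log 1/v)$ into the implication \eqref{eq:i-c}: writing $\gamma_1$ in the role of $\gamma$, this yields $\K_1(r) \geq \alpha_1(r)$ for all $r \geq 0$, where $\alpha_1$ is the increasing continuous function determined by $\alpha_1^{-1}(x) = \int_{\log 2}^x dy/\gamma_1(y)$. (A minor point: one should record that $\alpha_1$ is genuinely strictly increasing and that $\alpha_1(0) = \log 2$, so that $r_1 := \alpha_1^{-1}(\log 2 + D)$ in the next step makes sense; this follows from $\gamma_1$ being continuous and positive on $[\log 2,\infty)$.)

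Next I would invoke Lemma~\ref{lem:conc-going-down} with this $\alpha_1$: since $\snorm{d\mu_2/d\mu_1}_{L^\infty} \leq \exp(D)$, we get $\K_2(r + r_1) \geq \alpha_1(r) - D$ for all $r > 0$. It is cleaner to repackage this as a bound $\K_2(r) \geq \alpha_2(r)$ for a single increasing continuous $\alpha_2 : \Real_+ \to \Real \cup \{+\infty\}$ defined (for $r$ large enough, and extended by a harmless constant for small $r$) by $\alpha_2(r) := \alpha_1(r - r_1) - D$; equivalently $\alpha_2^{-1}(x) = r_1 + \alpha_1^{-1}(x + D) = r_1 + \int_{\log 2}^{x+D} dy/\gamma_1(y)$. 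Here I would check that $\alpha_2$ still satisfies the growth condition \eqref{eq:alpha-cond} required by Theorem~\ref{thm:main-equiv}: the hypothesis \eqref{eq:gamma-growth}, $\gamma_1(x) \geq 2\sqrt{\delta_0 \kappa x}$ for $x \geq x_0$, is exactly the differential condition that forces $\alpha_1(r) \geq \delta_0 \kappa r^2$ for $r$ beyond some $r_0$ — indeed if $\gamma(x) \geq 2\sqrt{\delta_0 \kappa x}$ then $\alpha^{-1}(x) = \int dy/\gamma(y) \leq \int dy/(2\sqrt{\delta_0\kappa y})$ grows like $\sqrt{x/(\delta_0\kappa)}$, i.e. $\alpha(r) \gtrsim \delta_0 \kappa r^2$ — and shifting by $r_1$ and subtracting the constant $D$ preserves this asymptotic with the same $\delta_0$ (possibly a larger $r_0$). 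When $\kappa = 0$ the condition \eqref{eq:gamma-growth} is vacuous and this verification is skipped.

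Finally I would apply Theorem~\ref{thm:main-equiv} to $\K_2 \geq \alpha_2$ under the $\kappa$-semi-convexity assumptions for $\mu_2$, obtaining
\[
\tilde{\I}_2(v) \geq \min\brac{c_{\delta_0}\, v\, \gamma(\log 1/v),\, c_{\kappa,\alpha_2}} \;\;\; \forall v \in [0,1/2],
\]
where $\gamma(x) = x/\alpha_2^{-1}(x)$. It remains to massage $\gamma$ and the constant into the stated form. We have $\alpha_2^{-1}(x) = r_1 + \int_{\log 2}^{x+D} dy/\gamma_1(y)$, and the claimed $\gamma_2(x) = x / \int_{\log 2}^{x+D} dy/\gamma_1(y)$ differs from $\gamma(x)$ only by the additive $r_1$ in the denominator; since $r_1 = \int_{\log 2}^{\log 2 + D} dy/\gamma_1(y) \leq \int_{\log 2}^{x+D} dy/\gamma_1(y)$ for $x \geq \log 2$, we get $\gamma(x) \geq \tfrac12 \gamma_2(x)$, which is absorbed into $c_{\delta_0}$ (at the cost of halving it). Likewise $c_{\kappa,\alpha_2}$ depends on $\alpha_2$ — and, per the last sentence of Theorem~\ref{thm:main-equiv}, when $\kappa > 0$ only through $\delta_0$ and $\alpha_2(r_0)$ — both of which are controlled by $\kappa$, $\gamma_1$ and $D$; so we may rename it $c_{\kappa,\gamma_1,D}$. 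In the $\kappa = 0$ case Theorem~\ref{thm:main-equiv} gives explicit $c_{\delta_0} = c$ and $c_{0,\alpha_2} = \tfrac{c}{4}\gamma(\log 4)$, which again is $\geq \tfrac{c}{8}\gamma_2(\log 4)$, a constant depending only on $\gamma_1$ and $D$.

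I expect the only real subtlety to be the bookkeeping in the second paragraph: verifying that the shift-and-subtract operation $\alpha_1 \mapsto \alpha_1(\cdot - r_1) - D$ preserves condition \eqref{eq:alpha-cond} with the \emph{same} $\delta_0$ (so that the output constant $c_{\delta_0}$ is controlled by the same $\delta_0$ as in the hypothesis \eqref{eq:gamma-growth}), and translating the differential-inequality hypothesis on $\gamma_1$ into the quadratic lower bound on $\alpha_1$. Everything else — the descent \eqref{eq:i-c}, the application of Lemma~\ref{lem:conc-going-down}, and the final comparison of $\gamma$ with $\gamma_2$ — is routine.
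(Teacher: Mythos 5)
Your proposal follows exactly the same route as the paper's proof: descend via (\ref{eq:i-c}) to $\K_1 \geq \alpha_1$ with $\alpha_1^{-1}(x) = \int_{\log 2}^x dy/\gamma_1(y)$, transfer to $\K_2 \geq \alpha_2$ with $\alpha_2^{-1}(x) = r_1 + \alpha_1^{-1}(x+D)$ via Lemma~\ref{lem:conc-going-down}, apply Theorem~\ref{thm:main-equiv}, and absorb the additive $r_1$ by noting $r_1 \leq \int_{\log 2}^{x+D} dy/\gamma_1(y)$, so $\gamma \geq \gamma_2/2$. One small correction to the ``bookkeeping'' you flag: the operations $\alpha_1 \mapsto \alpha_1(\cdot - r_1) - D$ (and already the passage from (\ref{eq:gamma-growth}) to a quadratic lower bound on $\alpha_1$) do \emph{not} preserve the exact constant $\delta_0$ in (\ref{eq:alpha-cond}) — e.g.\ for $\gamma_1(y) = 2\sqrt{\delta_0\kappa y}$ and $D$ large one has $\alpha_2(r) < \delta_0\kappa r^2$ for all large $r$ — but they do yield the condition with any smaller $\delta_0' \in (1/2,\delta_0)$; the paper takes $\delta_0' = \tfrac12(\delta_0 + 1/2)$ and then $\delta_0'' = \tfrac12(\delta_0'+1/2)$, which still depend only on $\delta_0$, so the conclusion is unaffected.
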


\begin{rem}
It is obvious that the assertion of the theorem is completely false without the semi-convexity assumption on $(M,g,\mu_2)$ (e.g. consider
the case that the support of $\mu_2$ is disconnected).
\end{rem}

\begin{proof}
Using the same notations as in Lemma \ref{lem:conc-going-down}, if follows from (\ref{eq:i-c}) that:
\[
 \K_1(r) \geq \alpha_1(r) \;\;\; \forall r \geq 0 \;\;\; \textrm{where} \;\; \alpha_1^{-1}(x) = \int_{\log 2}^x \frac{dy}{\gamma_1(y)} ~.
\]
The growth condition (\ref{eq:gamma-growth}) on $\gamma_1$ ensures that $\alpha_1$ satisfies the following growth condition:
\begin{equation} \label{eq:alpha_1-growth}
 \exists \delta'_0 := \frac{1}{2}(\delta_0 + 1/2) > \frac{1}{2} \;\;\; \exists r'_0 = r'_0(\delta_0,\kappa,x_0) \;\;\; \forall r \geq r'_0 \;\;\; \alpha_1(r) \geq \delta'_0 \kappa r^2 ~.
\end{equation}

Applying Lemma \ref{lem:conc-going-down}, we deduce from (\ref{eq:going-down}) that:
\[
 \K_2(r) \geq \alpha_2(r) := \begin{cases} \alpha_1(r - r_1) - D &  r>2 r_1 \\ \log 2 & r \leq 2 r_1 \end{cases} ~,
\]
where:
\[
 r_1 := \alpha_1^{-1}(\log 2 + D) = \int_{\log 2}^{\log 2 + D} \frac{dy}{\gamma_1(y)} ~.
\]
Clearly $\alpha_2$ inherits the growth condition (\ref{eq:alpha_1-growth}) from $\alpha_1$:
\[
 \exists \delta''_0 := \frac{1}{2}(\delta'_0 + 1/2) > \frac{1}{2} \;\;\; \exists r''_0 = r''_0(r'_0,\delta'_0,D,\kappa,r_1) \;\;\; \forall r \geq r''_0 \;\;\; \alpha_2(r) \geq \delta''_0 \kappa r^2 ~.
\]
Since our $\kappa$-semi-convexity assumptions are satisfied for $(M,g,\mu_2)$ and since $\delta''_0 > 1/2$, we may apply Theorem \ref{thm:main-equiv} and deduce the isoperimetric inequality:
\[
 \tilde{\I}_2(v) \geq \min(c_{\delta''_0} v \gamma'_2(\log 1/v) , c_{\kappa,\alpha_2} ) \;\;\; \forall v \in [0,1/2] ~,
\]
where:
\[
 \gamma'_2(x) = \frac{x}{\alpha_2^{-1}(x)} = \frac{x}{\alpha_1^{-1}(x+D) + r_1} ~.
\]
Since $\gamma'_2(x) \geq \gamma_2(x)/2$ for $x \geq \log 2$, the proof is complete.
\end{proof}

\begin{cor} \label{cor:iso-stability}
Under measure perturbations of the form $\snorm{\frac{d\mu_2}{d\mu_1}}_{L^\infty} \leq \exp(D)$ and our $\kappa$-semi-convexity assumptions on $(M,g,\mu_2)$ ($\kappa \geq 0$):
\begin{enumerate}
\item
If $\kappa=0$, then $p$-exponential isoperimetric inequalities for $p \in [1,\infty)$ are stable under perturbation:
\[
D_{Iso_p}(M,g,\mu_2) \geq c^1_{p,D} D_{Iso_p}(M,g,\mu_1) ~.
\]
\item
If $\kappa > 0$, then $p$-exponential isoperimetric inequalities for $p \in (2,\infty)$ are stable under perturbation:
\[
D_{Iso_p}(M,g,\mu_2) \geq c^2(D_{Iso_p}(M,g,\mu_1),p,\kappa,D)  ~.
\]
\item
If $\kappa > 0$, then a strong-enough Gaussian isoperimetric inequality ($p=2$ case) is also stable under perturbation:
\[
D_{Iso_2}(M,g,\mu_1) > \sqrt{\kappa} \;\; \Rightarrow \;\; D_{Iso_2}(M,g,\mu_2) \geq c^3(D_{Iso_2}(M,g,\mu_1),\kappa,D) ~.
\]
\end{enumerate}
Here $c^1_{p,D}>0$ is a constant depending solely on its arguments, and $c^2(\Delta,p,\kappa,D),c^3(\Delta,\kappa,D)$ are functions depending solely on their arguments, which in addition are strictly positive as soon as $\Delta$ is.
\end{cor}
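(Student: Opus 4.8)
The plan is to derive the three parts of Corollary \ref{cor:iso-stability} as direct applications of Theorem \ref{thm:iso-stability}, by choosing for each $p$ the function $\gamma_1$ appropriate to the $p$-exponential isoperimetric inequality for $\mu_1$, and then checking that the growth condition (\ref{eq:gamma-growth}) is verified in exactly the cases claimed. Recall from Remark \ref{rem:iso_p} that $\tilde{\I}_{(\Real,\abs{\cdot},\Gamma_p)}(v) \simeq_p v \log^{1/q} 1/v$ with $q = p^*$, so a $p$-exponential isoperimetric inequality with constant $\Delta := D_{Iso_p}(M,g,\mu_1)$ means (up to a $p$-dependent universal constant) that $\tilde{\I}_1(v) \geq c_p \Delta\, v \log^{1/q} 1/v$, i.e. we may take $\gamma_1(x) = c_p \Delta x^{1/q}$ on $[\log 2, \infty)$ (after a harmless adjustment near $x = \log 2$ to keep $\gamma_1$ continuous and positive; since $1/q \in [0,1)$ this only affects universal constants).

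First I would treat the case $\kappa = 0$. Here condition (\ref{eq:gamma-growth}) is vacuous, so Theorem \ref{thm:iso-stability} applies with no restriction on $p \in [1,\infty)$. It remains to compute $\gamma_2$. With $\gamma_1(y) = c_p \Delta y^{1/q}$ we get $\int_{\log 2}^{x+D} \frac{dy}{\gamma_1(y)} \simeq_{p} \frac{1}{\Delta}\brac{(x+D)^{1/p} - (\log 2)^{1/p}}$ since $1 - 1/q = 1/p$; for $x \geq \log 2$ this is $\simeq_{p,D} \frac{1}{\Delta} x^{1/p}$, hence $\gamma_2(x) \simeq_{p,D} \Delta\, x^{1 - 1/p} = \Delta\, x^{1/q}$. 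Therefore $v\gamma_2(\log 1/v) \simeq_{p,D} \Delta\, v \log^{1/q} 1/v \simeq_p \Delta \tilde{\I}_{(\Real,\abs{\cdot},\Gamma_p)}(v)$, and the additive constant term $c_{\kappa,\gamma_1,D}$ is, for $\kappa = 0$, a positive constant depending only on $p$ and $D$ (this is where one invokes the $\kappa = 0$ refinement in Theorem \ref{thm:main-equiv} that allows $c_{0,\alpha} = \frac{c}{4}\gamma(\log 4)$, which here is $\simeq_{p,D}\Delta$, and in any case one may absorb it by noting $\tilde{\I}_{(\Real,\abs{\cdot},\Gamma_p)}$ is bounded). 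Taking the best constant over all such inequalities yields $D_{Iso_p}(M,g,\mu_2) \geq c^1_{p,D}\, \Delta$, which is part (1).

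For parts (2) and (3) with $\kappa > 0$, the only new point is whether (\ref{eq:gamma-growth}) holds: we need $\gamma_1(x) = c_p \Delta x^{1/q} \geq 2\sqrt{\delta_0 \kappa x}$ for all large $x$ and some $\delta_0 > 1/2$, i.e. $c_p^2 \Delta^2 x^{2/q} \geq 4\delta_0 \kappa x$. Since $2/q = 2 - 2/p$, the exponent of $x$ on the left is $2 - 2/p$, which strictly exceeds $1$ precisely when $p > 2$; in that case the inequality holds for all sufficiently large $x$ regardless of the value of $\Delta > 0$, giving part (2) with $c^2(\Delta,p,\kappa,D) = c_{\kappa,\gamma_1,D}$-type output that is positive whenever $\Delta > 0$. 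When $p = 2$ we have $2/q = 1$, so the condition becomes $c_2^2 \Delta^2 \geq 4\delta_0 \kappa$, which can be arranged with some $\delta_0 > 1/2$ exactly when $\Delta^2$ exceeds a fixed multiple of $\kappa$ — this is the ``strong-enough'' hypothesis $D_{Iso_2}(M,g,\mu_1) > \sqrt{\kappa}$ in part (3) (after checking that the universal constant $c_2$ works out; if not, one rescales the threshold, but the statement as written presumes the normalization built into Remark \ref{rem:iso_p}). In both cases $\gamma_2(x) \simeq \Delta x^{1/q}$ as before, so Theorem \ref{thm:iso-stability} yields $\tilde{\I}_2(v) \geq \min(c\, \Delta\, v\log^{1/q} 1/v, c')$ with $c, c' > 0$ depending on $\Delta, p, \kappa, D$, hence the claimed lower bounds on $D_{Iso_p}(M,g,\mu_2)$.

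The main obstacle I anticipate is bookkeeping rather than conceptual: tracking how the various $p$-dependent and $D$-dependent universal constants from Remark \ref{rem:iso_p}, from the integral computation of $\gamma_2$, and from the two regimes of Theorem \ref{thm:main-equiv} combine, and in particular verifying in part (3) that the threshold comes out to exactly $\sqrt{\kappa}$ with the paper's normalization of $D_{Iso_2}$ — any mismatch there would force a different numerical threshold. A secondary subtlety is the behavior of $\gamma_1$ near the left endpoint $x = \log 2$, where $x^{1/q}$ must be replaced by a positive continuous function matching the genuine isoperimetric profile of $\Gamma_p$ on a neighborhood of $0$ in $v$; but since this region contributes only bounded quantities and is absorbed into the $\min$ with the additive constant, it does not affect the asymptotic comparison.
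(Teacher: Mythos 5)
Your proposal is correct and follows essentially the same route as the paper: plug $\gamma_1(x) = c\,D_{Iso_p}(M,g,\mu_1)\,x^{1/q}$ into Theorem \ref{thm:iso-stability}, observe that $\gamma_2(x)\geq c_{p,D}\,D_{Iso_p}(M,g,\mu_1)\,x^{1/q}$, and check the growth condition (\ref{eq:gamma-growth}) according to whether $2/q$ exceeds, equals, or is irrelevant to the exponent $1$. The one point you flagged but left open --- that the threshold in part (3) comes out to exactly $\sqrt{\kappa}$ --- is settled in the paper by invoking the precise asymptotic $\lim_{v\to 0+}\I_{(\Real,|\cdot|,\Gamma_2)}(v)/(v\sqrt{\log 1/v})=\sqrt{2}$, which gives $\gamma_1(x)\sim\sqrt{2}\,D_{Iso_2}(M,g,\mu_1)\sqrt{x}$ for large $x$ and hence allows $\delta_0 = D_{Iso_2}(M,g,\mu_1)^2/(2\kappa)>1/2$ precisely when $D_{Iso_2}(M,g,\mu_1)>\sqrt{\kappa}$.
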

\begin{proof}
Recalling Remark \ref{rem:iso_p}, by definition of a $p$-exponential isoperimetric inequality we have:
\[
\tilde{\I}_{M,g,\mu_1}(v) \geq c D_{Iso_p}(M,g,\mu_1) v \log^{1/q} 1/v \;\;\; \forall v \in [0,1/2] ~,
\]
for some universal constant $c>0$ and $q = p^*$. The first two claims then easily follow from Theorem \ref{thm:iso-stability} applied to $\gamma_1(x) = c D_{Iso_p}(M,g,\mu_1) x^{1/q}$, after noting that:
\begin{equation} \label{eq:good-estimate}
\gamma_2(x)  = \frac{1}{p} \frac{c D_{Iso_p}(M,g,\mu_1) x}{(x+D)^{1/p} - (\log2)^{1/p}} \geq \frac{c D_{Iso_p}(M,g,\mu_1)}{p} \brac{\frac{\log 2}{\log 2 + D}}^{1/p} x^{1/q} \;\;\; \forall x \geq \log 2 ~.
\end{equation}
Note that in the first case ($\kappa=0$), the dependence on $D_{Iso_p}(M,g,\mu_1)$ may be shown to be linear, due to the remarks at the end of the formulation of Theorem \ref{thm:main-equiv}. The third claim follows similarly, once it is checked (see e.g. \cite{BobkovGaussianIsopInqViaCube}) that when $p=2$:
\[
\lim_{v \rightarrow 0+} \frac{\I_{(\Real,|\cdot|,\Gamma_2)}(v)}{v \sqrt{\log 1/v}} = \sqrt{2} ~,
\]
where $\Gamma_2$ denotes the standard Gaussian measure on $\Real$. This implies that if $D_{Iso_2}(M,g,\mu_1) > \sqrt{\kappa}$, then setting $\delta_0 := D_{Iso_2}(M,g,\mu_1)^2 / (2\kappa) > 1/2$, the condition (\ref{eq:gamma-growth}) is satisfied for some big enough $x_0$. This completes the proof.
\end{proof}

\begin{rem}
The case $\kappa=0$ and $p=1$ of Corollary \ref{cor:iso-stability} was also deduced in our previous work \cite{EMilman-RoleOfConvexity}. In that work, it was shown that one may use: \begin{equation} \label{eq:c_1,D}
c^1_{1,D} \simeq \frac{1}{1+D} ~,
\end{equation}
and that up to universal constants, this result is sharp. Indeed, (\ref{eq:c_1,D}) may also be seen from (\ref{eq:good-estimate}) in the proof of Theorem \ref{thm:iso-stability}. In fact, this can be extended to the following estimate:
\[
c^1_{p,D} \simeq_p \frac{1}{1+D^{1/p}} ~.
\]
\end{rem}

\begin{rem}
It is also possible to derive an analogue of Lemma \ref{lem:conc-going-down} for the case when the roles of $\mu_1,\mu_2$ are interchanged, so that condition
(\ref{eq:going-down}) is replaced by $\snorm{\frac{d \mu_1}{d \mu_2}}_{L^\infty} \leq D$,  when $D \in (1,2)$. Unfortunately, in this case, we can only deduce a lower bound on $\K_2$ which will always be
smaller than $\log \frac{D}{D-1}$, and in particular, we cannot deduce that $\K_2$ increases to infinity. Repeating the arguments of Theorem \ref{thm:iso-stability}, this would only allow us to deduce an isoperimetric inequality of the form $\tilde{\I}_2(v) \geq c_{\kappa,\gamma_1,D}$ in the range $v \in [\lambda,1/2]$, for some $\lambda<1/2$ sufficiently close to $1/2$. This is not very useful in general, except in the case that our convexity assumptions are satisfied ($\kappa=0$), where this is already enough to imply a \emph{linear} isoperimetric inequality - see \cite{EMilman-RoleOfConvexity} and Subsection \ref{subsec:stab2-W1}.
\end{rem}

\subsection{Log-Sobolev Inequalities}

Although it is possible to formulate an analogue to Theorem \ref{thm:iso-stability} in the language of more general functional and Transport-Entropy inequalities, we prefer to restrict ourselves in this subsection to log-Sobolev inequalities, since these lie on the border of our method and are the most interesting in applications.

Before stating our result, let us first recall the following
well-known stability result due to Holley and Stroock \cite{HolleyStroockPerturbationLemma}:

\begin{lem}[Holley--Stroock]
Let $\mu_i = \exp(-V_i(x)) dvol_M(x)$ ($i=1,2$) denote two probability measures on a common Riemannian manifold $(M,g)$,
so that:
\begin{equation} \label{eq:HS-two-sided}
 V_1 + D_+ \geq V_2 \geq V_1 - D_{-} ~.
\end{equation}
If $(M,g,\mu_1)$ satisfies a log-Sobolev inequality:
\[
 \exists \rho > 0 \;\;\; \rho Ent_{\mu_1}(f^2) \leq \int | \nabla f |^2 d\mu_1 \;\;\; \forall f \in \F ~,
\]
then $(M,g,\mu_2)$ also satisfies a log-Sobolev inequality:
\[
 \rho \exp(-(D_{+} + D_{-}))  Ent_{\mu_2}(f^2) \leq \int | \nabla f |^2 d\mu_2 \;\;\; \forall f \in \F ~.
\]
\end{lem}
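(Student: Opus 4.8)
The plan is to use the variational (Donsker--Varadhan type) formula for the entropy functional, which converts the pointwise comparison of the densities in (\ref{eq:HS-two-sided}) into a comparison of the entropies, and separately into a comparison of the Dirichlet forms. The whole argument hinges on the fact that both sides of the log-Sobolev inequality are integrals of \emph{nonnegative} integrands against the measure.

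First I would record the consequence of (\ref{eq:HS-two-sided}) at the level of densities: since $V_2 \leq V_1 + D_+$ gives $d\mu_1 \leq \exp(D_+) d\mu_2$, and $V_2 \geq V_1 - D_-$ gives $d\mu_2 \leq \exp(D_-) d\mu_1$, we have the two-sided comparison of measures
\[
 \exp(-D_+) \, d\mu_1 \leq d\mu_2 \leq \exp(D_-) \, d\mu_1 ~.
\]
Second, I would invoke the variational formula
\[
 Ent_\mu(g) = \inf_{t > 0} \int \brac{ g \log g - (\log t)\, g - g + t } d\mu \;\;\; \text{for } g \geq 0 ~,
\]
whose integrand $h_t(s) := s \log s - (\log t) s - s + t$ is nonnegative for all $s,t > 0$ (it vanishes at $s = t$, and the infimum is attained at $t = \int g \, d\mu$). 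The nonnegativity of $h_t$ is the crucial point: for each fixed $t > 0$, the bound $d\mu_2 \leq \exp(D_-) d\mu_1$ yields $\int h_t(f^2) \, d\mu_2 \leq \exp(D_-) \int h_t(f^2) \, d\mu_1$, and taking the infimum over $t > 0$ on both sides gives $Ent_{\mu_2}(f^2) \leq \exp(D_-) \, Ent_{\mu_1}(f^2)$ (if $Ent_{\mu_1}(f^2) = +\infty$ the claimed inequality is trivial, so one may assume finiteness).

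Third, I would apply the log-Sobolev inequality for $\mu_1$ and then compare the Dirichlet forms, noting that $\abs{\nabla f}$ is an intrinsic Riemannian quantity, independent of the measure: since $\abs{\nabla f}^2 \geq 0$ and $d\mu_1 \leq \exp(D_+) d\mu_2$,
\[
 Ent_{\mu_2}(f^2) \leq \exp(D_-) \, Ent_{\mu_1}(f^2) \leq \frac{\exp(D_-)}{\rho} \int \abs{\nabla f}^2 d\mu_1 \leq \frac{\exp(D_- + D_+)}{\rho} \int \abs{\nabla f}^2 d\mu_2 ~,
\]
which rearranges to $\rho \exp(-(D_+ + D_-)) \, Ent_{\mu_2}(f^2) \leq \int \abs{\nabla f}^2 d\mu_2$ for all $f \in \F$, as claimed. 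There is no serious obstacle; the only care needed is in justifying the variational formula for $Ent_\mu$ and the trivial case of infinite entropy. It is also worth remarking that the identical argument applies to \emph{any} functional inequality both of whose sides are integrals of nonnegative integrands — which is precisely why the present paper's approach, routed through concentration, is required in order to improve the dependence on $\exp(D_+ + D_-)$ from linear to logarithmic.
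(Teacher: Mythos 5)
Your proof is correct and is precisely the classical Holley--Stroock argument; the paper itself states this lemma without proof, simply recalling it from \cite{HolleyStroockPerturbationLemma}. The key points — the nonnegativity of the integrand $s\log s - s\log t - s + t$ in the variational formula for the entropy, and the measure-independence of $\abs{\nabla f}$ — are exactly the right ones, and your closing remark correctly identifies why the two-sided bound (and the exponential dependence) is unavoidable by this route.
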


This result was obtained in the context of Statistical Mechanics, where $\set{V_i}$ represent some Hamiltonian potentials, and the condition (\ref{eq:HS-two-sided}) is interpreted as the assumption that $V_2$ is a bounded perturbation of $V_1$.
Although it is quite useful in various situations in this context, the condition (\ref{eq:HS-two-sided}) is unavoidably restrictive, since it is easy to check that there can be no stability in general without assuming both the upper and lower bounds on the perturbation. The following result, on the other hand, permits to dispose of the upper bound in (\ref{eq:HS-two-sided}), at the expense of an additional semi-convexity assumption; in addition, under our convexity assumptions, the quantitative dependence on the perturbation parameter is improved from exponential to linear:

\begin{thm} \label{thm:log-Sob-stability}
Let $\mu_i = \exp(-V_i(x)) dvol_M(x)$ ($i=1,2$) denote two probability measures on a common Riemannian manifold $(M,g)$. Assume that:
\[
V_2 \geq V_1 - D_{-} \;\;\; \text{and} \;\;\; \text{$(M,g,\mu_2)$ satisfies our $\kappa$-semi-convexity assumptions} ~,
\]
with some $\kappa \geq 0$. If $(M,g,\mu_1)$ satisfies a strong-enough log-Sobolev inequality:
\begin{equation} \label{eq:ls-assump}
 \exists \rho > \kappa/2 \;\;\text{ such that }\;\; \rho Ent_{\mu_1}(f^2) \leq \int | \nabla f |^2 d\mu_1 \;\;\; \forall f \in \F ~,
\end{equation}
then $(M,g,\mu_2)$ satisfies a log-Sobolev inequality:
\begin{equation} \label{eq:ls-concl}
 C_{\rho,\kappa,D_{-}} Ent_{\mu_2}(f^2) \leq \int | \nabla f |^2 d\mu_2 \;\;\; \forall f \in \F ~,
\end{equation}
where $C_{\rho,\kappa,D_{-}} > 0$ depends solely on its arguments. Moreover, when $\kappa = 0$, one may use:
\[
C_{\rho,0,D_{-}} = \rho \frac{c}{1+D_{-}} ~,
\]
where $c>0$ is a universal constant.
\end{thm}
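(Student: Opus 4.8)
The plan is to run the standard descent-and-ascent of this section: pass from the log-Sobolev inequality for $\mu_1$ down to a Gaussian concentration inequality, transfer the concentration to $\mu_2$ via the elementary Lemma~\ref{lem:conc-going-down}, and then climb back up for $\mu_2$ --- from concentration to a Gaussian isoperimetric inequality using the $\kappa$-semi-convexity of $\mu_2$ and Theorem~\ref{thm:main-equiv}, and from there to a log-Sobolev inequality via the first implication of the hierarchy (\ref{eq:hierarchy}) with $p=q=2$, which requires no convexity. First I would note that $V_2 \ge V_1 - D_-$ forces $\frac{d\mu_2}{d\mu_1} = \exp(V_1 - V_2) \le \exp(D_-)$ (with $\mu_2 \ll \mu_1$, since $V_2 = +\infty$ wherever $V_1 = +\infty$), so we are precisely in the setting of (\ref{eq:going-down}) with $D = D_-$. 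Next, (\ref{eq:ls-assump}) says exactly that $D_{LS_2}(M,g,\mu_1)^2 = \rho$ in the normalization of (\ref{eq:LS-inq-def}); a Herbst argument carried out with the \emph{sharp} constant gives $\int e^{\lambda f}\,d\mu_1 \le \exp\brac{\lambda \int f\,d\mu_1 + \lambda^2/(4\rho)}$ for every $1$-Lipschitz $f$, hence $\K_1(r) \ge \rho\brac{r - r_*}^2$ for all $r \ge r_* := \sqrt{\log 2/\rho}$. A harmless modification on a bounded interval (adding a small increasing term bounded by $\log 2$ and slightly enlarging $r_*$) turns this into $\K_1 \ge \alpha_1$ for a strictly increasing continuous $\alpha_1 : \Real_+ \to \Real_+$ with $\alpha_1(r) = (1 - o(1))\,\rho r^2$ as $r \to \infty$.

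Since $\rho > \kappa/2$, this quadratic growth guarantees $\alpha_1(r) \ge \delta_0 \kappa r^2$ for all large $r$, with some $\delta_0 > 1/2$ (there is nothing to check when $\kappa = 0$). Applying Lemma~\ref{lem:conc-going-down} with $D = D_-$ and patching at small radii as in the proof of Theorem~\ref{thm:iso-stability}, I would obtain $\K_2 \ge \alpha_2$, where $\alpha_2(r) = \alpha_1(r - r_1) - D_-$ for $r > 2r_1$ and $\alpha_2(r) = \log 2$ for $r \le 2r_1$, with $r_1 = \alpha_1^{-1}(\log 2 + D_-)$; subtracting $D_-$ and shifting by $r_1$ affect only lower-order terms, so $\alpha_2(r) = (1-o(1))\,\rho r^2$ still, and $\alpha_2$ inherits the growth condition (\ref{eq:alpha-cond}) with some $\delta_0'' > 1/2$. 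As $(M,g,\mu_2)$ satisfies the $\kappa$-semi-convexity assumptions, Theorem~\ref{thm:main-equiv} applies and yields $\tilde{\I}_2(v) \ge \min\brac{c_{\delta_0''}\, v\, \gamma_2(\log 1/v),\, c_{\kappa,\alpha_2}}$ with $\gamma_2(x) = x/\alpha_2^{-1}(x)$. The bound $\alpha_2^{-1}(x) \le C \rho^{-1/2}\brac{\sqrt{x} + \sqrt{D_-} + 1}$ gives $\gamma_2(x) \ge c\,\sqrt{\rho}\,(1+\sqrt{D_-})^{-1}\sqrt{x}$ for $x \ge \log 2$, and --- exactly as in the proof of Corollary~\ref{cor:iso-stability} --- the minimum against the constant $c_{\kappa,\alpha_2}$ is immaterial for small $v$ and harmless elsewhere, so $\mu_2$ obeys a Gaussian ($p=2$) isoperimetric inequality, $D_{Iso_2}(M,g,\mu_2) > 0$. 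The first implication of (\ref{eq:hierarchy}) with $p=q=2$ then gives $D_{LS_2}(M,g,\mu_2) \ge D_{Iso_2}(M,g,\mu_2)/C_1$, i.e. (\ref{eq:ls-concl}) with $C_{\rho,\kappa,D_-} := \brac{D_{Iso_2}(M,g,\mu_2)/C_1}^2 > 0$, depending only on $\rho,\kappa,D_-$.

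For the quantitative statement when $\kappa = 0$, I would just track the constants: Theorem~\ref{thm:main-equiv} may then be invoked with $c_{\delta_0''} = c$ and $c_{0,\alpha_2} = \frac{c}{4}\gamma_2(\log 4)$ for a universal $c$, and together with $\gamma_2(x) \ge c\,\sqrt\rho\,(1+\sqrt{D_-})^{-1}\sqrt x$ this yields $D_{Iso_2}(M,g,\mu_2) \ge c'\sqrt\rho/(1+\sqrt{D_-})$; squaring and using $D_{LS_2}(\mu_2) \ge D_{Iso_2}(\mu_2)/C_1$ gives $C_{\rho,0,D_-} = \rho\, c/(1+D_-)$, as asserted. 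The step I expect to be the real obstacle is the tightness reflected in the hypothesis $\rho > \kappa/2$: one must use the optimal Herbst constant, so that $\K_1$ grows like $\rho r^2$ rather than $\frac{\rho}{2}r^2$, in order that after the unavoidable loss $-D_-$ and shift $r_1$ in Lemma~\ref{lem:conc-going-down} the transferred profile $\alpha_2$ still dominates $\delta_0 \kappa r^2$ with $\delta_0 > 1/2$ --- which is precisely what condition (\ref{eq:alpha-cond}) of Theorem~\ref{thm:main-equiv} demands. If $\rho$ were merely positive with $\kappa > 0$, no such conclusion is available, consistent with the fact that the semi-convexity of $\mu_2$ alone cannot rule out narrow necks. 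The only remaining care is the bookkeeping in the $\kappa = 0$ case needed to improve the dependence on $D_-$ from the exponential $e^{-D_-}$ of Holley--Stroock to the linear $1/(1+D_-)$.
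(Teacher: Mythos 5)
Your proposal is correct and follows essentially the same route as the paper's proof: Herbst with the sharp constant to get $\K_1(r)\geq\rho(r-\sqrt{\log 2/\rho})_+^2$, transfer to $\K_2$ via Lemma~\ref{lem:conc-going-down} with $D=D_-$, verification of the growth condition (\ref{eq:alpha-cond}) using $\rho/\kappa>1/2$, Theorem~\ref{thm:main-equiv} to reach a Gaussian isoperimetric inequality for $\mu_2$, and the Ledoux/Bakry--Ledoux implication back to log-Sobolev, with the same constant-tracking $c\sqrt{\rho/(1+D_-)}$ in the $\kappa=0$ case. Your remark that the sharp Herbst constant is what makes the threshold $\rho>\kappa/2$ (rather than $\rho>\kappa$) work is exactly the point the paper exploits.
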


\begin{proof}
As usual, we use the same notations as in Lemma \ref{lem:conc-going-down}.
By the Herbst argument (see \cite{Ledoux-Book}), which was already mentioned in Section \ref{sec:pre}, it is known that the log-Sobolev inequality (\ref{eq:ls-assump}) implies the following Laplace-functional inequality:
\[
 \int \exp(\lambda f) d\mu_1 \leq \exp(\lambda^2/(4\rho)) \;\;\; \forall \lambda \geq 0 \;\; \forall \text{ $1$-Lipschitz $f$ s.t. } \int f d\mu_1 = 0 ~.
\]
It is easy to check (see e.g. Lemma \ref{lem:conc-Laplace}) that this implies the following concentration inequality on $(M,g,\mu_1)$:
\[
\K_1(r) \geq \rho \brac{r - \sqrt{\frac{\log2}{\rho}}}_+^2 \;\;\; \forall r \geq 0 ~.
\]
Using Lemma \ref{lem:conc-going-down} we deduce that:
\[
\K_2(r) \geq \alpha_2(r) := \rho \brac{r - 2 \sqrt{\frac{\log 2}{\rho}} - \sqrt{\frac{\log2 + D_{-}}{\rho}}}_+^2 - D_{-} ~.
\]
Since $\delta_0 := \frac{\rho}{\kappa} > 1/2$, $\K_2$ clearly satisfies the growth condition required to apply Theorem \ref{thm:main-equiv}:
\[
 \exists \delta'_0 := \frac{1}{2}(\delta_0 + 1/2) > \frac{1}{2} \;\;\; \exists r'_0 = r'_0(\rho,\kappa,D_{-}) \;\;\; \forall r \geq r'_0 \;\;\; \alpha_2(r) \geq \delta'_0 \kappa r^2 ~.
\]
Consequently, Theorem \ref{thm:main-equiv} implies that the following isoperimetric inequality is satisfied:
\begin{eqnarray*}
\tilde{\I}_2(v) & \geq & \min\brac{c_{\delta_0'} \sqrt{\rho} v \frac{\log 1/v}{\sqrt{\log 1/v + D_{-}} + 2 \sqrt{\log 2} + \sqrt{\log 2 + D_{-}}} , c_{\rho,\kappa,D_{-}}} \\
& \geq & c'_{\rho,\kappa,D_{-}} v \sqrt{\log 1/v} \;\;\;\;\;\; \forall v \in [0,1/2] ~.
\end{eqnarray*}
This means that $(M,g,\mu_2)$ satisfies a Gaussian (or 2-exponential) isoperimetric inequality in the notation of Section \ref{sec:pre}. As described there, it is known that this implies the log-Sobolev inequality (\ref{eq:ls-concl}), concluding the proof.
Note that when $\kappa = 0$, the remarks at the end of the formulation of Theorem \ref{thm:main-equiv} imply that one may use $c'_{\rho,\kappa,D_{-}} = c \sqrt{\rho/(1+D_{-})}$ above, with $c>0$ a universal constant, implying the last assertion of the theorem.
\end{proof}

A natural situation where only the lower bound in (\ref{eq:HS-two-sided}) is available, is when $\mu_2$ is the restriction of $\mu_1$ onto some ``event'' having positive probability. We state this explicitly as an immediate corollary of Theorem \ref{thm:log-Sob-stability}. To further elucidate this scenario, we restrict ourselves to the case $\kappa=0$ in the Euclidean setting, although it is of course possible to formulate the following more generally:

\begin{cor} \label{cor:log-Sob}
Let $\mu_1 = \exp(-V(x)) dx$ denote a probability measure on $\Real^n$. Let $A \subset \Real^n$ be such that:
\[
 \mu_1(A) = p > 0 \;\;\; \text{and} \;\;\; \text{$A$ is convex and $V$ is convex on $A$} ~.
\]
Set $\mu_2 = \mu_1|_A / \mu_1(A)$. If $(\Real^n,\abs{\cdot},\mu_1)$ satisfies a log-Sobolev inequality:
\[
 \exists \rho > 0 \;\;\; \rho Ent_{\mu_1}(f^2) \leq \int | \nabla f |^2 d\mu_1 \;\;\; \forall f \in \F ~,
\]
then $(\Real^n,\abs{\cdot},\mu_2)$ satisfies a log-Sobolev inequality:
\[
 c \frac{\rho}{1 + \log 1/p} Ent_{\mu_2}(f^2) \leq \int | \nabla f |^2 d\mu_2 \;\;\; \forall f \in \F ~,
\]
where $c > 0$ is a universal constant.
\end{cor}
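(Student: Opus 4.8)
The plan is to derive this directly from Theorem~\ref{thm:log-Sob-stability} in the case $\kappa=0$. First I would make the densities explicit. Writing $\mu_1 = \exp(-V(x))\,dx$, the restricted and renormalized measure $\mu_2 = \mu_1|_A/\mu_1(A)$ has density
\[
\frac{d\mu_2}{dx}(x) = \frac{1_A(x)}{p}\exp(-V(x)) = \exp(-V_2(x)), \qquad V_2(x) := \begin{cases} V(x) + \log p & x \in A, \\ +\infty & x \notin A. \end{cases}
\]
Since $0 < p \le 1$ we have $\log p \le 0$, so $V_2 \ge V - \log(1/p)$ on $A$ and trivially on $\Real^n\setminus A$; hence the one-sided hypothesis $V_2 \ge V_1 - D_{-}$ of Theorem~\ref{thm:log-Sob-stability} holds with $D_{-} := \log(1/p) \ge 0$. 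Note that there is no finite upper bound $D_{+}$ available here (the densities are mutually singular off $A$), which is exactly the scenario the classical Holley--Stroock lemma cannot handle and for which Theorem~\ref{thm:log-Sob-stability} is designed.

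Next I would verify that $(\Real^n,\abs{\cdot},\mu_2)$ satisfies our convexity assumptions ($\kappa=0$). The measure $\mu_2$ is supported on $A$, which is convex and hence geodesically convex for the Euclidean metric; on $A$ we have $d\mu_2 = \exp(-\psi)\,dx$ with $\psi = V + \log p$, and therefore the Bakry--\'Emery tensor is
\[
Ric_g + Hess_g\,\psi = Hess\, V \ge 0
\]
since $V$ is convex on $A$. If $V$ is not $C^2$ on $A$, one passes to the non-smooth version of the convexity assumptions: mollify $V$ on $A$ (shrinking $A$ slightly to an interior convex subset if needed) and renormalize to obtain measures $\mu_{2,m}\to\mu_2$ in total variation, each satisfying the smooth convexity assumptions with $\kappa=0$. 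I expect this approximation step to be the only slightly delicate point; it is nonetheless routine.

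Finally I would apply Theorem~\ref{thm:log-Sob-stability} with $\kappa=0$ and $D_{-}=\log(1/p)$. The strength hypothesis $\rho > \kappa/2$ reduces to $\rho>0$, which holds by assumption, and the log-Sobolev inequality for $\mu_1$ is given. The theorem then yields a log-Sobolev inequality for $(\Real^n,\abs{\cdot},\mu_2)$ with best constant at least
\[
C_{\rho,0,D_{-}} = \rho\,\frac{c}{1+D_{-}} = \frac{c\,\rho}{1+\log(1/p)},
\]
for a universal $c>0$, which is precisely the asserted inequality. All that remains beyond the convexity verification is bookkeeping, so no further obstacle is anticipated.
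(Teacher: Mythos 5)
Your proposal is correct and is exactly the argument the paper intends: the corollary is stated as an immediate consequence of Theorem~\ref{thm:log-Sob-stability} with $\kappa=0$ and $D_{-}=\log(1/p)$, noting that $d\mu_2/d\mu_1 = \mathbf{1}_A/p \le 1/p$ gives the one-sided bound and that convexity of $A$ and of $V$ on $A$ yields the convexity assumptions for $\mu_2$ (with the non-smooth case handled by the approximation built into the definition). No discrepancies to report.
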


\begin{rem}
Analogous stability results may be obtained for single-sided perturbations of other functional or TE inequalities, under
our semi-convexity assumptions. We mention here an analogue of the Holley--Stroock two-sided perturbation lemma, only
recently obtained by Gozlan, Roberto and Samson \cite{GozlanRobertoSamsonTEStability}, for Talagrand's $(2,2)$ Transport-Entropy inequality (and more general ones), which
yields the same exponential dependence on $D_+$ and $D_-$. As for the log-Sobolev inequality, this may be substantially improved under our semi-convexity assumptions.
\end{rem}

\section{Interlude: Concentration via Transport-Entropy Inequalities} \label{sec:inter}

In this section, we set the ground for the next sections, which deal with Transport-Entropy inequalities. Besides recalling known results, we show a complete equivalence between concentration and certain Transport-Entropy inequalities, which may be of independent interest.

\subsection{Weak Transport-Entropy inequalities}

\begin{dfn*}
We will say that $(\Omega,d,\mu)$ satisfies a (weak) Laplace-functional inequality if there exists $D > 0$, $\eps,\delta \geq 0$ and an increasing convex function
$\Phi : \Real_+ \rightarrow \Real_+$ with $\Phi(0) = 0$, so that:
\begin{eqnarray} \label{eq:trunc-Laplace}
 \int \exp(\lambda D f) d\mu \leq \exp\brac{\lambda \eps + \Phi^*(\lambda) + \delta} \;\;\; \forall \lambda \geq 0 \;\; \forall \text{ $1$-Lipschitz $f$ s.t. } \int f d\mu = 0 ~,
\end{eqnarray}
where $\Phi^*(\lambda) := \sup_{x \geq 0} \lambda x - \Phi(x)$ denotes the Legendre transform of $\Phi$.
\end{dfn*}

Recall that by the Monge-Kantorovich-Rubinstein dual characterization of $W_1$ (e.g. \cite[5.16]{VillaniOldAndNew}), we have that:
\begin{equation} \label{eq:MKR-duality}
W_1(\nu,\mu) = \sup \set{ \int f d\nu - \int f d\mu \; ; \; f \text{ is a 1-Lipschitz function on $(\Omega,d)$ } } ~.
\end{equation}
This characterization is the key ingredient in the
following (mild adaptation of a) theorem of Bobkov and G\"{o}tze \cite{BobkovGotzeLogSobolev} (see also \cite{VillaniOldAndNew}):

\begin{thm}[Bobkov--G\"{o}tze] \label{thm:BG}
Let $\Phi : \Real_+ \rightarrow \Real_+$ denote an increasing convex function so that $\Phi(0) = 0$. Then for any $\delta,\eps,D \geq 0$, the following weak Transport-Entropy inequality:
\begin{eqnarray} \label{eq:lem-trunc-TC}
 D W_1(\nu,\mu) \leq \Phi^{-1}(H(\nu | \mu) + \delta) + \eps \;\;\; \forall \text{ probability measure } \nu \ll \mu ~;
\end{eqnarray}
is equivalent to the weak Laplace-functional inequality (\ref{eq:trunc-Laplace}).
\end{thm}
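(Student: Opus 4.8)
The plan is to prove the two implications separately, using the Monge--Kantorovich--Rubinstein duality (\ref{eq:MKR-duality}) as the bridge, exactly as in the classical Bobkov--Götze argument adapted to the truncated/shifted form. For the direction (\ref{eq:trunc-Laplace}) $\Rightarrow$ (\ref{eq:lem-trunc-TC}), I would fix a probability measure $\nu \ll \mu$ and a $1$-Lipschitz $f$ with $\int f d\mu = 0$, and apply the standard entropy duality inequality (the Gibbs variational principle): for any $\lambda \geq 0$,
\[
\lambda D \int f d\nu \leq H(\nu | \mu) + \log \int \exp(\lambda D f) d\mu \leq H(\nu | \mu) + \lambda \eps + \Phi^*(\lambda) + \delta ~.
\]
Rearranging, $\lambda\left(D \int f d\nu - \eps\right) - \Phi^*(\lambda) \leq H(\nu|\mu) + \delta$ for all $\lambda \geq 0$; taking the supremum over $\lambda \geq 0$ and using that $\Phi$ is increasing convex with $\Phi(0)=0$ (so $\Phi^{**} = \Phi$) gives $\Phi\left(\left(D\int f d\nu - \eps\right)_+\right) \leq H(\nu|\mu)+\delta$, hence $D \int f d\nu - \eps \leq \Phi^{-1}(H(\nu|\mu)+\delta)$. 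Since $\int f d\mu = 0$, the left side is $D(\int f d\nu - \int f d\mu)$; taking the supremum over all such $f$ and invoking (\ref{eq:MKR-duality}) yields (\ref{eq:lem-trunc-TC}).

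For the converse (\ref{eq:lem-trunc-TC}) $\Rightarrow$ (\ref{eq:trunc-Laplace}), I would fix $\lambda \geq 0$ and a $1$-Lipschitz $f$ with $\int f d\mu = 0$, and choose the tilted measure $d\nu := \exp(\lambda D f) d\mu / \int \exp(\lambda D f) d\mu$ (assuming for now the normalizing integral is finite; the general case follows by truncating $f$ and passing to the limit with monotone/dominated convergence). A direct computation gives $H(\nu|\mu) = \lambda D \int f d\nu - \log \int \exp(\lambda D f) d\mu$, and by (\ref{eq:MKR-duality}) together with $\int f d\mu = 0$ we have $\int f d\nu = \int f d\nu - \int f d\mu \leq W_1(\nu,\mu)$. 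Substituting the hypothesis (\ref{eq:lem-trunc-TC}) and rearranging,
\[
\log \int \exp(\lambda D f) d\mu \leq \lambda D W_1(\nu,\mu) - H(\nu|\mu) \leq \lambda\left(\Phi^{-1}(H(\nu|\mu)+\delta) + \eps\right) - H(\nu|\mu) ~.
\]
Writing $t := H(\nu|\mu) \geq 0$ and $s := \Phi^{-1}(t+\delta)$, so $t = \Phi(s) - \delta$, the right-hand side equals $\lambda \eps + \lambda s - \Phi(s) + \delta \leq \lambda\eps + \Phi^*(\lambda) + \delta$ by definition of the Legendre transform. This is precisely (\ref{eq:trunc-Laplace}).

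The main technical obstacle is the integrability/truncation issue in the converse direction: the tilted measure $\nu$ is only well-defined when $\int \exp(\lambda D f)d\mu < \infty$, which is not a priori guaranteed. The standard fix is to replace $f$ by $f_N := \max(\min(f, N), -N)$ (still $1$-Lipschitz but bounded), apply the argument to $f_N$ with its own re-centering constant $c_N := \int f_N d\mu \to 0$, absorb the vanishing shift, and let $N \to \infty$ using monotone convergence on the left and the fact that $W_1$ and $H$ behave well under these truncations; one must check the constants $\eps,\delta$ are not worsened in the limit (they are not, since the shift $c_N\to 0$). A secondary minor point is justifying $\Phi^{**} = \Phi$ and the manipulation $\sup_\lambda [\lambda x - \Phi^*(\lambda)] = \Phi(x)$ for $x \geq 0$, which is exactly the Fenchel--Moreau theorem given that $\Phi$ is convex, increasing, lower semicontinuous (continuous, being finite and convex on $\Real_+$) with $\Phi(0) = 0$; the restriction to $x \geq 0$ and $\lambda \geq 0$ is handled by the monotonicity of $\Phi$. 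Everything else is a routine bookkeeping of the three parameters $D, \eps, \delta$ through the two dual computations.
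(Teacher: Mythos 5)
Your proof is correct and follows essentially the same route as the paper: Monge--Kantorovich--Rubinstein duality for $W_1$ combined with the entropy/Laplace (Donsker--Varadhan) duality. The paper compresses both directions into the single ``well known'' equivalence between $\int \psi\,\theta\,d\mu \leq Ent_\mu(\theta)$ for all densities $\theta$ and $\int e^{\psi}d\mu \leq 1$, whereas you unpack exactly that equivalence by hand --- the Gibbs variational inequality for one implication and the optimizing tilted measure for the other --- together with the same Legendre-transform bookkeeping.
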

\begin{proof}[Sketch of Proof]
By the dual characterization (\ref{eq:MKR-duality}) of $W_1$, the definitions of $\Phi^*$ and $H(\nu | \mu)$, and denoting $\theta = \frac{d\nu}{d\mu}$, (\ref{eq:lem-trunc-TC}) is equivalent to the statement that:
\[
 D \brac{\int f \theta d\mu - \int f d\mu} \leq \inf_{\lambda \geq 0} \frac{\Phi^*(\lambda) + Ent_{\mu}(\theta) + \delta}{\lambda} + \eps  ~,
\]
for any $1$-Lipschitz function $f$ and non-negative $\mu$-integrable function $\theta$ so that $\int \theta d\mu = 1$. Denoting:
\[
 \psi := \lambda D f - \lambda D \int f d\mu - \Phi^*(\lambda) - \eps \lambda - \delta ~,
\]
we see that $\int \psi \theta d\mu \leq Ent_\mu(\theta)$, for all $\theta$ as above. This is well known to be equivalent to $\int \exp(\psi) d\mu \leq 1$, which is equivalent to (\ref{eq:trunc-Laplace}).
\end{proof}

This gives rise to the following:
\begin{dfn*}
We will say that $(\Omega,d,\mu)$ satisfies a weak $(1,p)$ Transport-Entropy inequality ($p \geq 1$) if:
\[
\exists D>0 \;\;\; D W_1(\nu,\mu) \leq H(\nu | \mu)^{1/p} + 1 \;\;\; \forall \text{ probability measure } \nu ~.
\]
The best constant $D$ above will be denoted by $D_{wTE_{1,p}} = D_{wTE_{1,p}}(\Omega,d,\mu)$.
\end{dfn*}

It is well known that the weak Laplace-functional inequality (\ref{eq:trunc-Laplace}) is equivalent to our usual notion of concentration inequality. This is made precise in the following:

\begin{lem} \label{lem:conc-Laplace}
Let $\Phi : \Real_+ \rightarrow \Real_+$ denote an increasing convex function so that $\Phi(0) = 0$.
\begin{enumerate}
\item
If $\eps,\delta,D \geq 0$ then (\ref{eq:trunc-Laplace}) implies:
\begin{equation} \label{eq:trunc-conc}
 \K_{(\Omega,d,\mu)}(r) \geq \Phi((D' r - z')_+) - \delta' \;\;\; \forall r \geq 0 ~,
\end{equation}
with $D' = D$, $\delta'=\delta$ and $z' = \Phi^{-1}(\log 2 + \delta) + 2 \eps$.
\item
If $z',D' \geq 0$ and $\delta' \geq -\log 2$ then for any $\tau \in (0,1)$, (\ref{eq:trunc-conc}) implies (\ref{eq:trunc-Laplace}) with $D = \tau D'$, $\delta = \delta' + \log (\exp(-\delta') + \frac{\tau}{1 -\tau})$ and $\eps = \tau \brac{2z' + \Phi^{-1}(\log 2 + \delta') + \int_0^\infty \exp(-\Phi(r)) dr}$.
\end{enumerate}
\end{lem}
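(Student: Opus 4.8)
The two directions of Lemma \ref{lem:conc-Laplace} are essentially a dressed-up Chernoff bound and its converse via a layer-cake integration, so the plan is to reduce everything to the standard one-Lipschitz deviation estimate and then track the parameters $\eps,\delta,z',D'$ carefully. For part (1), I would start from (\ref{eq:trunc-Laplace}) applied to a fixed $1$-Lipschitz function: given a Borel set $B$ with $\mu(B)\geq 1/2$, I would take $f(x) := d(x,B) - m$ where $m$ is chosen so that $\int f\,d\mu = 0$; note $0 \le m = \int d(x,B)\,d\mu \le \Phi^{-1}(\log 2 + \delta)/D + \eps/D$ via a first application of the Markov/Chernoff bound to $d(\cdot,B)$ (using $\mu(B) \ge 1/2$ to control the median/mean by $\eps$ and $\delta$). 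Then for $r > m$, Markov's inequality gives
\[
1 - \mu(B_r) = \mu(\{d(\cdot,B) \ge r\}) \le \exp(-\lambda D(r - m))\,\exp(\Phi^*(\lambda) + \delta) \qquad \forall \lambda \ge 0,
\]
and optimizing over $\lambda$ turns $\lambda D(r-m) - \Phi^*(\lambda)$ into $\Phi(D(r-m))$ by Fenchel duality (here one uses that $\Phi$ is convex, increasing, and $\Phi(0)=0$, so $\Phi^{**} = \Phi$). This yields $\K(r) \ge \Phi(D(r-m)) - \delta$, and bounding $m \le z'/D$ with $z' = \Phi^{-1}(\log 2 + \delta) + 2\eps$ (the factor $2$ is slack to absorb the Lipschitz-vs-mean comparison cleanly) gives (\ref{eq:trunc-conc}) with $D' = D$, $\delta' = \delta$.

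For part (2), I would go the other way: given the concentration bound (\ref{eq:trunc-conc}) and a fixed $1$-Lipschitz $f$ with $\int f\,d\mu = 0$, I need to bound $\int \exp(\lambda \tau D' f)\,d\mu$. The standard move is: let $M$ be a median of $f$; since $f$ is $1$-Lipschitz, $\{f \ge M + r\} \subseteq (\{f \ge M\})_{?}$... more precisely $\{f < M+r\} \supseteq (\{f \le M\})_r$, and $\mu(\{f\le M\}) \ge 1/2$, so $1 - \mu(\{f < M + r\}) \le \exp(-\Phi((D'r - z')_+) + \delta')$ for $r \ge 0$; symmetrically for the lower tail. Then write $\int \exp(\lambda \tau D' f)\,d\mu$ as an integral of tail probabilities (layer-cake), split the $r$-range at the point $z'/D'$ where the exponent vanishes, bound the near part trivially by $\exp(\lambda \tau D' \cdot (\text{const}))$ and the far part by $\int_0^\infty \lambda \tau D' \exp(\lambda \tau D' r)\exp(-\Phi(D'r - z') + \delta')\,dr$. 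The change of variables $s = D'r - z'$ and the elementary inequality $\lambda \tau \exp(\lambda \tau s) \le \frac{\tau}{1-\tau}\big(\text{something}\big) + \exp(s)\cdot(\dots)$ — really one wants $\lambda \tau s - \Phi(s) \le \Phi^*(\lambda \tau) \le \Phi^*(\lambda)$ only after separating out a factor that contributes the $\frac{\tau}{1-\tau}$ term — is what produces $\delta = \delta' + \log(\exp(-\delta') + \frac{\tau}{1-\tau})$ and $\eps = \tau(2z' + \Phi^{-1}(\log 2 + \delta') + \int_0^\infty \exp(-\Phi(r))\,dr)$. Here the median-to-mean correction contributes the $\Phi^{-1}(\log 2 + \delta')$ term inside $\eps$ (deviation of $f$ from its median is controlled by the concentration bound), and $\int_0^\infty \exp(-\Phi(r))\,dr$ is finite precisely because $\Phi$ is increasing and unbounded.

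The main obstacle is part (2): unlike part (1), which is a clean one-line Chernoff argument, the converse requires (a) passing from mean-zero normalization to a median and back, paying a controlled correction that must land inside the stated $\eps$, and (b) the delicate bookkeeping in the tail integral that converts a subtracted $\Phi(\cdot)$ into an added $\Phi^*(\lambda)$ while the spare factor of $\tau < 1$ is spent on making the geometric-type series $\int_0^\infty \exp(\lambda\tau D' r - \Phi(D'r))\,dr$ converge and contribute only $\log(\exp(-\delta') + \frac{\tau}{1-\tau})$ to $\delta$. The inequality $\lambda\tau x \le \lambda(1-\tau)^{-1}\cdot 0 + \dots$ is not quite the right phrasing; the actual trick is to write $\lambda\tau D' r = \tau \cdot (\lambda D' r)$ and use convexity of $t\mapsto e^t$ together with $\Phi^* $ being superadditive-free, so I expect the cleanest route is: bound $\exp(\lambda\tau D'r) \le (1-\tau) + \tau\exp(\frac{\lambda D'r}{1})$ is false, so instead use Jensen with weights $\tau, 1-\tau$ on the exponent, i.e. $\exp(\tau u) \le \tau e^u + (1-\tau)$, applied with $u = \lambda D' r - (\text{the part that gets eaten by }\Phi)$. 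Getting this last algebraic step to match the stated constants exactly is where the real work lies; everything else is routine once the normalization and layer-cake setup are in place.
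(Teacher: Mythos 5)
Your overall architecture coincides with the paper's: part (1) is a Chernoff/Fenchel-duality argument (the paper only remarks that it is ``simpler and follows along the same lines''), and your version of it is complete and correct, with the $2\eps$ in $z'$ arising exactly as you describe (one $\eps$ from bounding $D\int d(\cdot,B)\,d\mu$, one from the tail estimate). For part (2) you also correctly identify the three ingredients the paper uses: the equivalence of (\ref{eq:trunc-conc}) with the median deviation bound $\mu\set{f\geq med_\mu f+r}\leq\exp(-\Phi((D'r-z')_+)+\delta')$, the mean--median correction obtained by integrating that tail (which is precisely where $2z'$, $\Phi^{-1}(\log 2+\delta')$ and $\int_0^\infty\exp(-\Phi(r))\,dr$ enter $\eps$), and a layer-cake bound on the Laplace transform.

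The gap is in the one algebraic step that part (2) actually turns on, and you acknowledge as much. After integration by parts one must bound $\lambda\int_0^\infty\exp(\lambda s-\Phi(s))\,ds$; the crude bound $\lambda s-\Phi(s)\leq\Phi^*(\lambda)$ gives a divergent integral, and neither of your candidate fixes closes the argument: the first you correctly reject as false, and the second ($\exp(\tau u)\leq\tau e^u+(1-\tau)$) only reduces the problem to controlling $\int\exp(\lambda D'f)\,d\mu$ at the \emph{full} parameter $\lambda D'$, which the concentration hypothesis cannot do (take $\Phi(x)=x$, $\lambda=1$ and, say, the exponential measure: the Laplace transform is genuinely infinite at the critical rate). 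The paper's mechanism is the splitting $\lambda s-\Phi(s)=\frac{\lambda}{\tau}s-\Phi(s)-\frac{1-\tau}{\tau}\lambda s\leq\Phi^*(\lambda/\tau)-\frac{1-\tau}{\tau}\lambda s$, whose integration yields exactly $\frac{\tau}{1-\tau}\exp(\Phi^*(\lambda/\tau))$; replacing $\lambda$ by $\tau\lambda$ at the very end converts $\Phi^*(\lambda/\tau)$ into $\Phi^*(\lambda)$ and $D'$ into $D=\tau D'$, and the additive $1$ is absorbed via $1+\frac{\tau}{1-\tau}e^{\delta'+\Phi^*(\lambda)}\leq\exp\brac{\Phi^*(\lambda)+\delta'+\log(e^{-\delta'}+\frac{\tau}{1-\tau})}$, using $\Phi^*\geq0$, which produces the stated $\delta$. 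Without this splitting your plan for part (2) does not close.
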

\begin{proof}[Sketch of proof]
We will show statement (2), statement (1) is simpler and follows along the same lines. It is immediate that (\ref{eq:trunc-conc}) is equivalent to:
\begin{equation} \label{eq:med-bound}
\mu\set{ f \geq med_\mu f + r} \leq \exp(-\Phi((rD' - z')_+) + \delta') \;\;\; \forall r \geq 0 \;\;\; \forall \text{ 1-Lipschitz function $f$} ~,
\end{equation}
where $med_\mu f$ denotes a median of $f$ with respect to $\mu$, i.e. a value so that $\mu(f \geq med_\mu
f) \geq 1/2$ and $\mu(f \leq med_\mu f) \geq 1/2$. Now let $f$ denote a 1-Lipschitz function with $\int f d\mu = 0$. Using (\ref{eq:med-bound}) to evaluate:
\[
\abs{ \int f d\mu - med_\mu f} \leq \int |f - med_\mu f| d\mu \leq r_0' + \int_{r_0'}^\infty \mu(|f - med_\mu f|\geq r) dr ~,
\]
with $r_0':= (z' + \Phi^{-1}(\log 2 + \delta'))/D'$, one checks using (\ref{eq:med-bound}) again that:
\begin{equation} \label{eq:exp-bound}
\mu\set{ f \geq r} \leq \mu\set{ f \geq med_\mu f - \abs{ \int f d\mu - med_\mu f}  + r} \leq \exp(-\Phi((D'r - z_0')_+) + \delta') \;\;\; \forall r \geq 0 ~,
\end{equation}
where $z_0' := 2z' + \Phi^{-1}(\log 2 + \delta') + \int_0^\infty \exp(-\Phi(r)) dr$.
Integrating by parts again and using (\ref{eq:exp-bound}), we evaluate:
\begin{eqnarray*}
\int \exp(D' \lambda f) d\mu & \leq & \exp(\lambda z_0') + \int_{z_0'}^\infty \lambda \exp(\lambda s) \mu\set{ f \geq s/D'} ds  \\
& \leq & \exp(\lambda z_0') \brac{ 1+ \exp(\delta') \lambda  \int_0^\infty \exp(\lambda s - \Phi(s)) ds } ~.
\end{eqnarray*}
Using that $\lambda s - \Phi(s) \leq \Phi^*(\lambda/\tau) - \frac{1-\tau}{\tau} \lambda s$ for all $s \geq 0$, it follows that:
\[
\int \exp(D' \lambda f) d\mu \leq \exp(\lambda z_0') \brac{1 + \frac{\tau}{1-\tau} \exp(\delta' + \Phi^*(\lambda/\tau))} ~,
\]
from which point it is immediate to verify the claim.
\end{proof}

\begin{rem} \label{rem:conc-Laplace}
Note that since (\ref{eq:trunc-conc}) is only meaningful whenever the right-hand side exceeds $\log 2$,
we can always change the values of $D',z',\delta'$ to $D'',z'',\delta''$ so that $z'' \geq 0$ and $\delta'' \geq -\log 2$ are arbitrary, as long as one of these inequalities is strict. Since $H(\nu | \mu) \geq 0$, the same applies to (\ref{eq:lem-trunc-TC}) and changing the values of $D,\eps,\delta$ to $D_2,\eps_2,\delta_2$ so that $\eps_2 \geq 0$ and $\delta_2 \geq 0$ are arbitrary, as long as one of these inequalities is strict. In either case, $D'' > 0$ or $D_2 > 0$ are determined in a manner depending on all the other parameters and in addition on $\Phi^{-1}$.
\end{rem}

Combining Theorem \ref{thm:BG} with the equivalence between weak Laplace-functional and concentration inequalities given by Lemma \ref{lem:conc-Laplace} and Remark \ref{rem:conc-Laplace}, it is easy to check that weak $(1,p)$ Transport-Entropy and $p$-exponential concentration inequalities are precisely equivalent:

\begin{cor} \label{cor:conc-weak-TE}
$D_{Con_p} \simeq D_{wTE_{1,p}}$ uniformly in $p \geq 1$.
\end{cor}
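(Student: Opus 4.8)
The plan is to prove Corollary~\ref{cor:conc-weak-TE} by chaining together the two previously-established equivalences: Theorem~\ref{thm:BG} (Bobkov--G\"otze), which relates weak $(1,p)$ Transport-Entropy inequalities to weak Laplace-functional inequalities, and Lemma~\ref{lem:conc-Laplace} together with Remark~\ref{rem:conc-Laplace}, which relates weak Laplace-functional inequalities to $p$-exponential concentration inequalities. The key point is that in both directions one must specialize the abstract convex function $\Phi$ to $\Phi(x) = x^p$ (so that $\Phi^{-1}(y) = y^{1/p}$), and then carefully track how the numeric parameters $D, \eps, \delta$ (respectively $D', z', \delta'$) transform, verifying that the ``$D$-constants'' only change by universal multiplicative factors.

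First I would prove the direction $D_{wTE_{1,p}} \gtrsim D_{Con_p}$. Starting from a $p$-exponential concentration inequality $\K(r) \geq -1 + (Dr)^p$ for all $r \geq 0$, I rewrite this in the form (\ref{eq:trunc-conc}) with $\Phi(x) = x^p$: since $-1 + (Dr)^p$ need only be meaningful when it exceeds $\log 2$, by Remark~\ref{rem:conc-Laplace} I can pass to parameters $D' \simeq D$, $z'$ a universal constant, $\delta' = 0$ (absorbing the additive $-1$ into the shift $z'$, using that $\Phi((D'r-z')_+)$ for an appropriate universal $z'$ dominates $-1 + (Dr)^p$ in the relevant range). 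Then Lemma~\ref{lem:conc-Laplace}(2), applied with, say, $\tau = 1/2$, yields the weak Laplace-functional inequality (\ref{eq:trunc-Laplace}) with $\Phi(x) = x^p$, $D \simeq D'$, and $\eps, \delta$ universal constants (here $\int_0^\infty \exp(-r^p)dr$ is bounded uniformly over $p \geq 1$, e.g. by $\Gamma(1+1/p) \leq 1$). Finally Theorem~\ref{thm:BG} converts this into $DW_1(\nu,\mu) \leq (H(\nu|\mu) + \delta)^{1/p} + \eps$; since $(a+\delta)^{1/p} \leq a^{1/p} + \delta^{1/p}$ and $\delta^{1/p}, \eps$ are bounded by universal constants, after rescaling $D$ by a universal factor this gives $D' W_1(\nu,\mu) \leq H(\nu|\mu)^{1/p} + 1$ with $D' \simeq D$, i.e. $D_{wTE_{1,p}} \geq c\, D_{Con_p}$.

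The reverse direction $D_{Con_p} \gtrsim D_{wTE_{1,p}}$ is entirely analogous and simpler: a weak $(1,p)$ TE inequality is (\ref{eq:lem-trunc-TC}) with $\Phi(x)=x^p$, $\delta = 0$, $\eps = 1$; Theorem~\ref{thm:BG} gives (\ref{eq:trunc-Laplace}) with the same parameters; Lemma~\ref{lem:conc-Laplace}(1) gives (\ref{eq:trunc-conc}) with $D' = D$, $\delta' = 0$, and $z' = (\log 2)^{1/p} + 2$ a universal constant; and (\ref{eq:trunc-conc}) with $\Phi(x) = x^p$ says $\K(r) \geq ((Dr - z')_+)^p$, which after a trivial rescaling of $D$ (to absorb the shift $z'$, using that $((Dr-z')_+)^p \geq -1 + (cDr)^p$ for suitable universal $c$ and all $r$, since the inequality is vacuous where the right side is below $\log 2$) yields a $p$-exponential concentration inequality with constant $\simeq D$. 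The main obstacle, such as it is, is bookkeeping: one must be careful that every constant introduced (the shifts $z', \eps, \delta$, and the integral $\int_0^\infty \exp(-r^p)dr$) is genuinely bounded \emph{uniformly in $p \geq 1$}, so that the hidden constants in $\simeq$ do not secretly depend on $p$; this is the content of the phrase ``uniformly in $p \geq 1$'' in the statement, and it is where the estimate $\int_0^\infty \exp(-r^p)\,dr = \Gamma(1 + 1/p) \in (\Gamma(1), \Gamma(2)] = (c, 1]$ and the elementary inequality $(a+b)^{1/p} \leq a^{1/p} + b^{1/p}$ (valid for $p \geq 1$) do the real work.
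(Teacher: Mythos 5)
Your proposal is correct and follows exactly the route the paper intends: the paper derives this corollary by simply combining Theorem \ref{thm:BG} with Lemma \ref{lem:conc-Laplace} and Remark \ref{rem:conc-Laplace}, leaving the bookkeeping to the reader, and your write-up supplies precisely that bookkeeping (specializing $\Phi(x)=x^p$, verifying that the shifts $z',\eps,\delta$ and the integral $\int_0^\infty e^{-r^p}\,dr=\Gamma(1+1/p)\leq 1$ are bounded uniformly in $p\geq 1$, and absorbing them into the concentration/TE constants via the vacuity of the inequalities below level $\log 2$). No gaps.
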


\subsection{Tight Transport-Entropy inequalities}

We will also need to use a ``tight" form of our weak $(1,p)$ Transport-Entropy inequalities for some of the results in the next sections. This is summarized in the following proposition,
which extends beyond the previously mentioned results in this section,
and which may be of independent interest:

\begin{prop} \label{prop:con-TE}
The following inequalities are equivalent:
\begin{enumerate}
\item
The $p$-exponential concentration inequality:
\begin{equation} \label{eq:lem-con-p}
\K(r) \geq (D_{Con_p} r)^p - 1 \;\;\; \forall r \geq 0 ~.
\end{equation}

\item
The weak $(1,p)$ Transport-Entropy inequality:
\begin{equation} \label{eq:lem-wTE-p}
D_{wTE_{1,p}} W_1(\nu,\mu) \leq H(\nu | \mu)^{1/p} + 1 \;\;\;  \forall \text{ probability measure } \nu ~.
\end{equation}

\item
The $(1,\varphi_p)$ Transport-Entropy inequality:
\begin{equation} \label{eq:lem-TE-ql}
D_{TE_{1,\varphi_p}} W_1(\nu,\mu) \leq \varphi_{p}^{-1}(H(\nu | \mu)) \;\;\;  \forall \text{ probability measure } \nu ~,
\end{equation}
where, recall, $\varphi_p$ is given by (\ref{eq:def-varphi}).
\end{enumerate}
The equivalence is in the sense that the best constants above satisfy $D_{Con_p} \simeq D_{wTE_{1,p}} \simeq D_{TE_{1,\varphi_p}}$ uniformly in $p \geq 1$.
\end{prop}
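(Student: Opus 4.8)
The plan is to prove the chain of equivalences $D_{Con_p} \simeq D_{wTE_{1,p}} \simeq D_{TE_{1,\varphi_p}}$ in three steps, exploiting that the first two constants have already been shown equivalent. First, the equivalence $D_{Con_p} \simeq D_{wTE_{1,p}}$ is precisely the content of Corollary \ref{cor:conc-weak-TE} (which itself follows from Theorem \ref{thm:BG} applied with $\Phi(x) = x^p$, together with Lemma \ref{lem:conc-Laplace} and Remark \ref{rem:conc-Laplace}); so nothing new is needed there, and it only remains to relate $D_{TE_{1,\varphi_p}}$ to these. Since $(3) \Rightarrow (2)$ is the trivial direction, I would dispatch it first: by Jensen's inequality (or directly from the definition of $\varphi_p$), one has $\varphi_p^{-1}(t) \leq C_p(t^{1/p} + 1)$ for all $t \geq 0$ — indeed for $p \geq 2$, $\varphi_p^{-1}(t) = (pt)^{1/p}$, while for $p \in [1,2]$ the function $\varphi_p$ is quadratic near $0$ and behaves like $x^p/p$ at infinity, so $\varphi_p^{-1}(t) \leq \max(\sqrt{2t}, (pt)^{1/p} + c) \leq C(t^{1/p}+1)$. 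Plugging this into (\ref{eq:lem-TE-ql}) yields (\ref{eq:lem-wTE-p}) with $D_{wTE_{1,p}} \geq c\, D_{TE_{1,\varphi_p}}$, uniformly in $p$.

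The substantive direction is $(2) \Rightarrow (3)$: a \emph{reverse Jensen inequality}, upgrading a weak $(1,p)$ TE inequality (which only controls $W_1$ by $H^{1/p}+1$, i.e. is useless for small $H$) to a tight inequality with no additive constant. The mechanism I would use is the standard self-improvement/tensorization trick for transport-entropy inequalities: apply the weak inequality (\ref{eq:lem-wTE-p}) to the $N$-fold product $\mu^{\otimes N}$ on $\Omega^N$ equipped with the $\ell^1$-type metric $d_N(x,y) = \sum_i d(x_i,y_i)$, and then restrict to tensor-product measures $\nu = \nu_1 \otimes \cdots \otimes \nu_N$. For such $\nu$ one has $H(\nu \mid \mu^{\otimes N}) = \sum_i H(\nu_i \mid \mu)$ and $W_1^{d_N}(\nu, \mu^{\otimes N}) = \sum_i W_1(\nu_i,\mu)$ (the $\ell^1$ metric decouples the transport problem). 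Taking all $\nu_i$ equal to a fixed $\nu$ gives $N\, D_{wTE_{1,p}} W_1(\nu,\mu) \leq (N H(\nu\mid\mu))^{1/p} + 1$, hence $D_{wTE_{1,p}} W_1(\nu,\mu) \leq N^{1/p - 1} H(\nu\mid\mu)^{1/p} + 1/N$. For $p > 1$ the exponent $1/p - 1 < 0$, so optimizing over $N$ — roughly choosing $N \simeq H(\nu\mid\mu)^{-1/p} \cdot (\text{something})$, or more carefully balancing the two terms — kills the additive constant and produces $D\, W_1(\nu,\mu) \leq C_p H(\nu\mid\mu)^{1/p}$ in the regime where $H$ is small, which is exactly where the tight inequality adds content; for $H$ large the weak inequality already gives it. One then checks the constants behave well as $p \to 1$: there the self-improvement degenerates (exponent $\to 0$), but $p=1$ is covered separately since for $p=1$ the weak and the $\varphi_1$-TE statements can be compared directly, or one invokes that for $p$ near $1$ the quadratic part of $\varphi_p$ near the origin is what one needs and the argument can be run with the product-measure trick using $\varphi_p$ itself rather than $t^{1/p}$.

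The main obstacle I anticipate is twofold. First, one must be careful that $W_1^{d_N}(\nu_1\otimes\cdots\otimes\nu_N,\mu^{\otimes N}) = \sum_i W_1(\nu_i,\mu)$ — the ``$\geq$'' is immediate by projecting couplings, and ``$\leq$'' by taking a product of optimal couplings, but this relies on using the $\ell^1$ metric on the product, and one must confirm that applying (\ref{eq:lem-wTE-p}) on $(\Omega^N, d_N, \mu^{\otimes N})$ is legitimate — it is, since the weak $(1,p)$ TE inequality is equivalent, via Corollary \ref{cor:conc-weak-TE}, to $p$-exponential concentration, and concentration tensorizes with the right dimension-free constant (this is exactly the dimension-free feature emphasized in the paper). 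Second, the endpoint $p=1$ and uniformity as $p \downarrow 1$: the naive optimization gives constants blowing up like $(p-1)^{-1}$ or similar, so I would either treat a neighborhood of $p=1$ by hand (using that $\varphi_p$ is comparable to $\varphi_1$ up to the quadratic correction near $0$, together with the known equivalence (\ref{eq:BGL-equiv}) linking $(\varphi_1,1)$-TE, $\infty$-mLS and Poincaré, which is itself equivalent to $1$-exponential concentration), or verify directly that the optimization constant stays bounded. This last point — establishing the equivalence \emph{uniformly} in $p \geq 1$ rather than for each fixed $p$ — is the delicate part, and mirrors the uniformity issues already flagged in the discussion of Kolesnikov's theorem in Section \ref{sec:pre}.
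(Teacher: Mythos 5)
Your reduction to the single implication $(2)\Rightarrow(3)$ is correct, and the two easy parts (the identification $D_{Con_p}\simeq D_{wTE_{1,p}}$ via Corollary \ref{cor:conc-weak-TE}, and $(3)\Rightarrow(2)$ from $\varphi_p^{-1}(t)\leq p^{1/p}(t^{1/p}+1)$) match the paper exactly. But the mechanism you propose for the substantive direction does not work, and the failure is not a matter of constants.

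The gap is the claim that the weak $(1,p)$ TE inequality (equivalently, $p$-exponential concentration) passes to $(\Omega^N, d_N, \mu^{\otimes N})$ with the $\ell^1$ product metric and a constant independent of $N$. It does not. Take $\mu=\Gamma_2$ the standard Gaussian on $\Real$: the function $f(x)=\sum_{i=1}^N x_i$ is $1$-Lipschitz for $d_N$ but has variance $N$ under $\mu^{\otimes N}$, so $D_{Con_2}(\Real^N,d_N,\mu^{\otimes N})\simeq N^{-1/2}$. More decisively, your own optimization refutes the premise: if the weak inequality held on the product with a dimension-free constant, then restricting to $\nu^{\otimes N}$ would give $D\,W_1(\nu,\mu)\leq N^{1/p-1}H(\nu|\mu)^{1/p}+1/N\to 0$ for every $p>1$, forcing $W_1(\nu,\mu)=0$ for all $\nu$ of finite entropy — absurd unless $\mu$ is a point mass. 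Dimension-free tensorization of concentration over products is not a consequence of concentration on the base space; it is essentially \emph{equivalent} to the tight TE inequality you are trying to prove (this is Gozlan's characterization, cited in the paper), so the argument is circular where it is not false. The fallback near $p=1$ via (\ref{eq:BGL-equiv}) also fails, since $1$-exponential concentration does not imply a Poincar\'e inequality without convexity assumptions.

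The paper's route is entirely different and avoids products. Via the Bobkov--G\"otze duality (Theorem \ref{thm:BG}), $(2)$ becomes the Laplace bound $\int\exp(D\lambda f)\,d\mu\leq\exp(\lambda p^{1/p}+\lambda^q/q)$ for centered $1$-Lipschitz $f$, and $(3)$ amounts to the same bound with right-hand side $\exp(\varphi_{*,q}(\lambda))$, i.e.\ \emph{quadratic} in $\lambda$ near $0$ after shrinking $D$ by a universal factor. For $\lambda\geq 1$ this is immediate; for $\lambda\in[0,1]$ one shows the log-Laplace transform $\L(\lambda)=\log\int e^{\lambda f}d\mu$ satisfies $\L(0)=\L'(0)=0$ and $\L''(\lambda)\leq(\int f^4d\mu)^{1/2}(\int e^{2\lambda f}d\mu)^{1/2}\leq C^2/D^2$ for $\lambda\leq D/2$, both factors being controlled by the weak Laplace bound at $\lambda=1$ via Markov's inequality. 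That second-derivative estimate is the missing idea in your proposal.
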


\begin{proof}
By Corollary \ref{cor:conc-weak-TE}, $D_{Con_p} \simeq D_{wTE_{1,p}}$ uniformly in $p \geq 1$, so it remains to prove that $D_{wTE_{1,p}} \simeq D_{TE_{1,\varphi_p}}$ uniformly. It will be convenient to slightly change our normalization, so we remark that $D_{wTE'_{1,p}} := p^{1/p} D_{wTE_{1,p}}$ is clearly the best constant in the following inequality:
\begin{equation} \label{eq:lem-wTE'-p}
D_{wTE'_{1,p}} W_1(\nu,\mu) \leq (p H(\nu | \mu))^{1/p} + p^{1/p} \;\;\;  \forall \text{ probability measure } \nu ~.
\end{equation}
Since $\varphi_{p}^{-1}(x) \leq p^{1/p}(x^{1/p} + 1)$, it is immediate that $D_{wTE'_{1,p}} \geq D_{TE_{1,\varphi_p}}$. The other direction is the tricky part. We will assume that $p>1$, the case $p=1$ follows by approximation. By Theorem \ref{thm:BG}, (\ref{eq:lem-wTE'-p}) is equivalent to the statement that:
\begin{equation} \label{eq:prop-assumption}
\int \exp(D_{wTE'_{1,p}} \lambda f) d\mu \leq \exp\brac{\lambda p^{1/p} + \frac{\lambda^q}{q}} \;\;\; \forall \text{ $1$-Lipschitz $f$ s.t. } \int f d\mu = 0 ~.
\end{equation}
To conclude, we will need to deduce from this that for some universal constant $c>0$:
\begin{equation} \label{eq:prop-conclusion}
\int \exp(\lambda c D_{wTE'_{1,p}} f) d\mu \leq \exp\brac{\varphi_{*,q}(\lambda)} \;\;\; \forall \lambda \geq 0 \;\;\; \forall \text{ $1$-Lipschitz $f$ s.t. } \int f d\mu = 0 ~,
\end{equation}
which will imply that $D_{TE_{1,p}} \geq c D_{wTE'_{1,p}}$ by Theorem \ref{thm:BG} (recall that $\varphi_{*,q} = (\varphi_p)^*$ is given by (\ref{eq:def-varphi*})).
By taking $c>0$ smaller than some universal constant $c_0>0$, it is easy to check that (\ref{eq:prop-assumption}) implies (\ref{eq:prop-conclusion}) for $\lambda \geq 1$, so it remains to check (\ref{eq:prop-conclusion}) in the range $\lambda \in [0,1]$.

Fix any $1$-Lipschitz function $f$ so that $\int f d\mu = 0$. We proceed by denoting $\L(\lambda) := \log \int \exp(\lambda f) d\mu$, the logarithm of the Laplace transform. Note that $\L(\lambda) \geq 0$ by Jensen's inequality. Further denoting $\mu_\lambda := \exp(\lambda f) \mu / \int \exp(\lambda f) d\mu$, it is immediate to check that:
\[
\L''(\lambda) = \int f^2 d\mu_\lambda - (\int f d\mu_\lambda)^2 \leq \frac{\int f^2 \exp(\lambda f) d\mu}{\int \exp(\lambda f) d\mu} \leq \int f^2 \exp(\lambda f) d\mu ~.
\]
By the Cauchy-Schwarz inequality, it follows that:
\[
\L''(\lambda) \leq \brac{\int f^4 d\mu}^{\frac{1}{2}} \brac{\int \exp(2 \lambda f) d\mu}^{\frac{1}{2}} ~.
\]
Hence, using (\ref{eq:prop-assumption}) with $\lambda=1$ and a standard application of the Markov--Chebyshev inequality and integration by parts, we conclude that $\L''(\lambda) \leq C^2/{D_{wTE'_{1,p}}^2}$ whenever $\lambda \leq D_{wTE'_{1,p}}/2$, for some universal constant $C>0$. Since $\L(0) = 0$ and $\L'(0) = \int f d\mu = 0$, we conclude that $\L(\lambda) \leq \frac{1}{2} (C \lambda / D_{wTE'_{1,p}})^2$ for $\lambda \in [0,D_{wTE'_{1,p}}/2]$. Denoting $c := \min(1/C , 1/2, c_0)$, this implies that:
\[
\int \exp(c D_{wTE'_{1,p}} \lambda f) d\mu \leq \exp\brac{\lambda^2/2} \leq \exp\brac{\varphi_{*,q}(\lambda)}\;\;\; \forall \lambda \in [0,1] ~.
\]
This confirms the validity of (\ref{eq:prop-conclusion}) in the range $\lambda \in [0,1]$ and concludes the proof.
\end{proof}

\begin{rem}
A previous characterization of $(1,2)$ Transport-Entropy inequalities was obtained by Djellout, Guillin and Wu \cite{DjelloutGuillinWu}, and strengthened by Bolley and Villani \cite{BolleyVillani}. This was generalized to more general $(1,\phi)$ TE inequalities by Gozlan and Leonard \cite{GozlanLeonard}. All of these characterizations were in terms of an integrability condition of the form $B = \int \exp(\phi(d(x,x_0))) d\mu(x) < \infty$ for some (equivalently, all) $x_0 \in \Omega$. The problem with these criteria is that they all inevitably result in bad quantitative dependence when trying to estimate $D_{TE_{1,\phi}}$ via $B$ and $\phi$; in particular, when the underlying space is an $n$-dimensional manifold, they will all result in dimension dependent bounds (see \cite{EMilmanGeometricApproachPartI}). Proposition \ref{prop:con-TE} (which clearly extends to more general $(1,\phi)$ TE inequalities) demonstrates that the right characterization is via concentration inequalities (as opposed to integrability criteria). In addition, all of the above mentioned criteria may be easily recovered from it, since it is easy to check (see e.g. \cite[Section 7.2]{EMilmanGeometricApproachPartI}) that:
\[
\K(r) \geq \phi((r - \phi^{-1}(\log 2B))_+) - \log B \;\;\; \forall r \geq 0 ~.
\]
\end{rem}

\begin{rem} \label{rem:no-tight}
By checking what happens for measures with compact yet disconnected support, it is not difficult to realize that the equivalence between the weak and tight Transport-Entropy inequalities is rather special to the $W_1$ distance, and that analogous results cannot hold in general for $W_p$, $p > 1$.
However, under our semi-convexity assumptions, it is in fact possible to tighten these weak TE inequalities, by passing through the appropriate concentration inequality and employing Theorem \ref{thm:main-equiv}.
\end{rem}

\section{Stability under Wasserstein distance perturbation} \label{sec:stab2}

A drawback of using a distance of the form $\snorm{\frac{d\mu_2}{d\mu_1}}_{L^\infty}$ (or $\snorm{\frac{d\mu_1}{d\mu_2}}_{L^\infty}$) to measure the extent of a perturbation when analyzing the stability of various inequalities, as in Section \ref{sec:stab1}, is that the estimates become meaningless when the measures $\mu_1,\mu_2$ have disjoint supports, or more generally, are mutually singular.
In this section, we analyze the stability with respect to several new distances, which provide further flexibility and generalize some of the previous results:
\begin{itemize}
\item
The Wasserstein distance $W_1(\mu_1,\mu_2)$ and consequently the relative entropies $H(\mu_1 | \mu_2)$ and $H(\mu_2 | \mu_1)$.
\item
A new distance $\widetilde{W}_{\Psi_1}(\mu_1,\mu_2)$ which we introduce, called the $\Psi_1$-Lipschitz metric.
\end{itemize}

\subsection{Stability under $\widetilde{W}_{\Psi_1}$ perturbation}

Let $\M_{\Psi_1}$ denote the space of probability measures $\mu$ satisfying that $\int_\Omega \exp(\lambda d(x,x_0)) d\mu < \infty$ for any $\lambda > 0$ and some (any) $x_0 \in \Omega$. On this space, we introduce the following distance:
\begin{dfn*}
\begin{equation} \label{eq:def-Wpsi1}
\widetilde{W}_{\Psi_1}(\nu,\mu) := \sup \set{ \frac{\abs{\log \int \exp(g) d\nu - \log \int \exp(g) d\mu}}{\norm{g}_{Lip}} \; ; \; g  \text{ is a Lipschitz function on $(\Omega,d)$ } } ~.
\end{equation}
\end{dfn*}

It is clear that $\widetilde{W}_{\Psi_1}$ satisfies the triangle inequality and that it is symmetric. It is also easy to see that $\widetilde{W}_{\Psi_1}(\nu,\mu) = 0$ if and only if $\nu = \mu$, for instance by using the Hahn-Banach theorem together with the Stone-Weierstrass theorem (in its lattice version) and the fact that functions of the form $\exp(g)$ as above separate points in a metric space. We consequently verify that $\widetilde{W}_{\Psi_1}$ is a metric on $\M_{\Psi_1}$, which we call the $\Psi_1$-Lipschitz metric.
Another immediate property using the Monge-Kantorovich-Rubinstein dual characterization (\ref{eq:MKR-duality}) of $W_1$, is that:
\begin{equation} \label{eq:1-Psi_1}
W_1(\nu,\mu) \leq \widetilde{W}_{\Psi_1}(\nu,\mu) ~.
\end{equation}
Indeed, this is seen by testing in (\ref{eq:def-Wpsi1}) functions $g$ of the form $\eps f$, where $f$ is an arbitrary 1-Lipschitz function, and taking the limit as $\eps$ tends to $0$.

We comment that, in analogy to the dual characterization (\ref{eq:MKR-duality}) of $W_1$,
this metric may have some relation to the more standard $\Psi_1$-Wasserstein metric:
\[
W_{\Psi_1}(\nu,\mu) := \inf \set{\lambda > 0 \; ; \; \inf_{\pi \in \M(\nu,\mu)} \int_{\Omega \times \Omega} \exp(c(x,y)/\lambda) d\pi(x,y) \leq 2 } ~,
\]
but we have not been able to make this relation precise.

\medskip

The key reason to use the $\widetilde{W}_{\Psi_1}$ metric is the following lemma, which asserts that (weak) Laplace-functional inequalities are stable under perturbations in this metric:
\begin{lem}
Let $\mu_1,\mu_2$ denote two Borel probability measures on a common metric space $(\Omega,d)$. Assume that $(\Omega,d,\mu_1)$ satisfies the following (weak) Laplace-functional inequality:
\[
\int \exp(\lambda f) d\mu_1 \leq \exp\brac{\Phi(\lambda)} \;\;\; \forall \lambda \geq 0 \;\; \forall \text{ $1$-Lipschitz $f$ s.t. } \int f d\mu_1 = 0 ~,
\]
where $\Phi : \Real_+ \rightarrow \Real_+$ is an arbitrary function. Then $(\Omega,d,\mu_2)$ satisfies the following weak Laplace-functional inequality:
\[
\int \exp(\lambda f) d\mu_2 \leq \exp\brac{\Phi(\lambda) + 2 \lambda \widetilde{W}_{\Psi_1(\mu_1,\mu_2)}} \;\;\; \forall \lambda \geq 0 \;\; \forall \text{ $1$-Lipschitz $f$ s.t. } \int f d\mu_2 = 0 ~.
\]
\end{lem}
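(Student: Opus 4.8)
The plan is to reduce the statement about $\mu_2$ to the hypothesis about $\mu_1$ by passing through the function $\exp(\lambda f)$ and using the definition of $\widetilde{W}_{\Psi_1}$ to control the logarithm of its $\mu_2$-integral by the logarithm of its $\mu_1$-integral, with an error proportional to $\lambda \, \widetilde{W}_{\Psi_1}(\mu_1,\mu_2)$. The single subtlety is that the Laplace-functional hypothesis on $\mu_1$ is normalized so that the test function has zero $\mu_1$-mean, whereas the function $f$ appearing in the desired conclusion has zero $\mu_2$-mean; so I will first have to recenter $f$.

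First I would fix $\lambda \geq 0$ and a $1$-Lipschitz function $f$ with $\int f \, d\mu_2 = 0$, and set $g := \lambda f$, which is Lipschitz with $\norm{g}_{Lip} = \lambda$ (if $\lambda = 0$ the statement is trivial, so assume $\lambda > 0$). Applying the definition (\ref{eq:def-Wpsi1}) of $\widetilde{W}_{\Psi_1}$ to this $g$ gives
\[
\abs{\log \int \exp(\lambda f) \, d\mu_2 - \log \int \exp(\lambda f)\, d\mu_1} \leq \lambda \, \widetilde{W}_{\Psi_1}(\mu_1,\mu_2) ~,
\]
hence
\[
\int \exp(\lambda f) \, d\mu_2 \leq \exp\brac{\lambda \, \widetilde{W}_{\Psi_1}(\mu_1,\mu_2)} \int \exp(\lambda f)\, d\mu_1 ~.
\]
Next I would handle the recentering. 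The function $\tilde{f} := f - \int f\, d\mu_1$ is $1$-Lipschitz with $\int \tilde f \, d\mu_1 = 0$, so the hypothesis on $\mu_1$ applies and gives $\int \exp(\lambda \tilde f)\, d\mu_1 \leq \exp(\Phi(\lambda))$, i.e. $\int \exp(\lambda f)\, d\mu_1 \leq \exp\brac{\Phi(\lambda) + \lambda \int f \, d\mu_1}$. It remains to bound the displacement $\int f \, d\mu_1$. Since $\int f \, d\mu_2 = 0$, we have $\int f\, d\mu_1 = \int f \, d\mu_1 - \int f \, d\mu_2 \leq W_1(\mu_1,\mu_2) \leq \widetilde{W}_{\Psi_1}(\mu_1,\mu_2)$, using the Monge--Kantorovich--Rubinstein duality (\ref{eq:MKR-duality}) for the middle step and the comparison (\ref{eq:1-Psi_1}) for the last.

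Combining the three displays yields
\[
\int \exp(\lambda f)\, d\mu_2 \leq \exp\brac{\lambda \widetilde{W}_{\Psi_1}(\mu_1,\mu_2)} \exp\brac{\Phi(\lambda) + \lambda \widetilde{W}_{\Psi_1}(\mu_1,\mu_2)} = \exp\brac{\Phi(\lambda) + 2\lambda \widetilde{W}_{\Psi_1}(\mu_1,\mu_2)} ~,
\]
which is exactly the claimed inequality. There is no real obstacle here; the only point requiring a moment's care is the bookkeeping of the two distinct normalizations (zero $\mu_1$-mean versus zero $\mu_2$-mean), and the observation that the resulting mean-displacement term is itself controlled by $\widetilde{W}_{\Psi_1}$ via (\ref{eq:1-Psi_1}) — which is precisely why the final constant is $2$ rather than $1$.
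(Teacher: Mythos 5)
Your proof is correct and coincides with the paper's own argument: both apply the definition of $\widetilde{W}_{\Psi_1}$ to $g=\lambda f$ to pass from $\mu_2$ to $\mu_1$, then recenter $f$ to have zero $\mu_1$-mean and control the displacement $\int f\, d\mu_1$ by $W_1(\mu_1,\mu_2)\leq \widetilde{W}_{\Psi_1}(\mu_1,\mu_2)$ via (\ref{eq:MKR-duality}) and (\ref{eq:1-Psi_1}), which is exactly where the factor $2$ comes from. Nothing is missing.
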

\begin{proof}
By definition, we have that:
\[
\int \exp(\lambda f) d\mu_2 \leq \int \exp(\lambda f) d\mu_1 \exp(\lambda \widetilde{W}_{\Psi_1(\mu_1,\mu_2)}) \;\;\; \forall \lambda \geq 0 \;\; \forall \text{ $1$-Lipschitz $f$} ~.
\]
If $\int f d\mu_2 = 0$ for a $1$-Lipschitz function $f$, we know by (\ref{eq:MKR-duality}) and (\ref{eq:1-Psi_1}) that $\int f d\mu_1 \leq W_1(\mu_1,\mu_2) \leq \widetilde{W}_{\Psi_1}(\mu_1,\mu_2)$, so we can bound the first term on the right hand side above by:
\[
\int \exp\brac{\lambda (f - \int f d\mu_1)} d\mu_1 \; \exp\brac{\lambda \int f d\mu_1} \leq \exp\brac{\Phi(\lambda) + \lambda \widetilde{W}_{\Psi_1}(\mu_1,\mu_2)} ~.
\]
This completes the proof.
\end{proof}

Since weak Laplace-functional inequalities are equivalent to concentration inequalities by Lemma \ref{lem:conc-Laplace}, we deduce that concentration inequalities are also stable under $\widetilde{W}_{\Psi_1}$ perturbation. Hence, as in the previous section, this stability may be transferred to the level of isoperimetric and functional inequalities, once our semi-convexity assumptions on $(\Omega,d,\mu_2)$ are satisfied. The formulations are completely analogous to those of Theorem \ref{thm:iso-stability}, Corollary \ref{cor:iso-stability} and Theorem \ref{thm:log-Sob-stability}, so we do not explicitly state them here, and leave this to the interested reader.

\subsection{Stability of Linear (Cheeger) Isoperimetric Inequalities under Convexity Assumptions} \label{subsec:stab2-W1}

For the results of this subsection, let us recall some further notation and results from \cite{EMilman-RoleOfConvexityCRAS,EMilman-RoleOfConvexity}.
Given a measure-metric space $(\Omega,d,\mu)$, we denote by $D_{FM} = D_{FM}(\mu)$ the best constant $D$ in the following \emph{first-moment inequality}:
\[
 \int | f - med_\mu(f) | d\mu \leq \frac{1}{D} \;\;\;\;\; \forall \text{ $1$-Lipschitz function $f$} ~,
\]
where as usual, $med_\mu f$ denotes a median of $f$ with respect to $\mu$. Recall from Section \ref{sec:pre} that $D_{Con_p} = D_{Con_p}(\mu)$ is the \emph{$p$-exponential concentration constant}, i.e. the best constant $D$ so that:
\begin{equation} \label{eq:con-p-again}
 \K_{(\Omega,d,\mu)}(r) \geq -1 + D r^p \;\;\; \forall r \geq 0 ~.
\end{equation}
Recall also that $D_{Poin} = D_{Poin}(\mu)$ denotes the Poincar\'e constant, and that $D_{Iso_1} = D_{Iso_1}(\mu)$ denotes the \emph{exponential isoperimetric constant}, which by Remark \ref{rem:iso_p} is equivalent (up to universal constants) to the \emph{Cheeger constant}, i.e. the best constant $D$ in the following \emph{linear isoperimetric inequality}:
\[
 \tilde{\I}_{(\Omega,d,\mu)}(v) \geq D v \;\;\; \forall v \in [0,1/2] ~.
\]

The following theorem was proved in \cite{EMilman-RoleOfConvexity} (see also \cite{EMilmanGeometricApproachPartI} for a slightly stronger statement and simplified proof):
\begin{thm}[\cite{EMilman-RoleOfConvexity}] \label{thm:main-ROC}
If $(\Omega,d,\mu)$ satisfies our convexity assumptions then:
\[
 D_{Iso_1} \geq c_1 D_{FM} \geq c_2 D_{Con_1} \geq c_3 D_{Poin} \geq c_4 D_{Iso_1} ~,
\]
where $c_1,c_2,c_3,c_4 > 0$ are some numeric constants.
\end{thm}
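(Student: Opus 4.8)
The plan is to prove the cyclic chain of four inequalities link by link. Three of the four — $D_{FM} \geq c_2 D_{Con_1}$, $D_{Con_1} \geq c_3 D_{Poin}$ and $D_{Poin} \geq c_4 D_{Iso_1}$ — are ``soft'' and hold on any complete separable measure-metric space without any convexity; the remaining one, $D_{Iso_1} \geq c_1 D_{FM}$, is the crux and is the only place where the convexity assumptions intervene.

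For the soft links I would argue as follows. For $D_{FM} \geq c_2 D_{Con_1}$: if $\K(r) \geq -1 + D_{Con_1} r$, then a $1$-Lipschitz $f$ with median $m$ satisfies $\mu(|f-m| \geq t) \leq 2 e^{1 - D_{Con_1} t}$ (test the sets $\{f \leq m\}$ and $\{f \geq m\}$, each of measure $\geq 1/2$), whence $\int |f-m|\, d\mu = \int_0^\infty \mu(|f-m|>t)\,dt \leq C/D_{Con_1}$ — the same layer-cake computation as in Lemma \ref{lem:conc-Laplace}. For $D_{Con_1} \geq c_3 D_{Poin}$: a Poincar\'e inequality self-improves to exponential concentration of Lipschitz functions around their mean, e.g.\ by the Herbst-type bound on the Laplace transform (see \cite{Ledoux-Book}). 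For $D_{Poin} \geq c_4 D_{Iso_1}$: this is Cheeger's inequality $D_{Poin} \geq c\, D_{Iso_1}$ together with the identification $D_{Iso_1} \simeq h$ from Remark \ref{rem:iso_p} (\cite{CheegerInq,MazyaCheegersInq1}). Composing the three gives $D_{FM} \geq (c_2 c_3 c_4)\, D_{Iso_1}$.

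It then remains to close the loop with $D_{Iso_1} \geq c_1 D_{FM}$, i.e.\ to show that a first-moment inequality forces a linear (Cheeger-type) isoperimetric inequality under our convexity assumptions. (This link genuinely fails without convexity — e.g.\ two clusters at bounded distance joined by a thin neck have $D_{FM}$ bounded away from $0$ but $D_{Iso_1}$ arbitrarily small — and it is the substance of \cite{EMilman-RoleOfConvexityCRAS,EMilman-RoleOfConvexity}.) I would proceed in three steps. (i) Applying the first-moment inequality to $f(x) = \inf_{y \in A} d(x,y)$, which has median $0$ when $\mu(A) \geq 1/2$, and then Markov's inequality, gives $1 - \mu(A_r) \leq \frac{1}{D_{FM} r}$ for every such $A$; equivalently $\K(r) \geq \alpha(r) := \log(\max(2, D_{FM} r))$, an increasing continuous function tending to $\infty$. (ii) Our convexity assumptions are the case $\kappa = 0$, so condition \eqref{eq:alpha-cond} is vacuous and Theorem \ref{thm:main-equiv} applies to this $\alpha$; since $\gamma(x) = x/\alpha^{-1}(x) = x D_{FM} e^{-x}$ for $x \geq \log 2$, it yields $\tilde{\I}(v) \geq \min\!\big( c\, D_{FM}\, v^2 \log(1/v),\ c'\, D_{FM} \big)$ for all $v \in [0,1/2]$ — in particular $\tilde{\I}(1/2) \geq c' D_{FM}$ and $\tilde{\I}(v) \geq 2 c' D_{FM}\, v$ on a fixed sub-interval $[v_0, 1/2]$. (iii) Upgrade this ``bulk'' bound to all of $(0,1/2]$: under the $CD(0,\infty)$ condition the isoperimetric profile $\I$ is concave on $[0,1]$ with $\I(0) = \I(1) = 0$, so $\tilde{\I}(v) \geq 2\tilde{\I}(1/2)\, v \geq 2 c' D_{FM}\, v$ for $v \leq 1/2$, which is the desired linear isoperimetric inequality; here one uses the convexity a second time, through the regularity and rigidity of isoperimetric minimizers that underlie this concavity (cf.\ \cite{EMilman-RoleOfConvexity,EMilmanGeometricApproachPartI}).

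The hard part is step (iii), and its difficulty is already foreshadowed in (ii): the concentration extractable from a bare first-moment bound is only polynomial (order $1$), and feeding it through Theorem \ref{thm:main-equiv} — even iterating with \eqref{eq:i-c} — yields a linear isoperimetric inequality only in the bulk, never near $v = 0$. Turning the bulk estimate into a genuine Cheeger inequality requires a second, more structural use of the curvature bound: either the concavity of the isoperimetric profile under the Bakry--\'Emery condition (which itself demands an approximation argument to cope with non-compactness of $M$ and the possible absence of an isoperimetric minimizer, as in \cite{EMilmanGeometricApproachPartI}), or a direct Buser--Ledoux-type semi-group argument applying the gradient bound $|\nabla P_t f| \leq P_t |\nabla f|$ (valid for $\kappa = 0$) to indicators $\mathbf{1}_A$ together with the weak concentration from (i), in the spirit of \cite{BuserReverseCheeger,LedouxSpectralGapAndGeometry,BakryLedoux}. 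Everything else — the three soft links and the reduction (i)--(ii) — is routine.
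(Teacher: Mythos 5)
Your proposal is correct and matches the paper's treatment: the paper offers no proof beyond Remark \ref{rem:main-ROC-trivial}, which makes exactly your decomposition (the second link is elementary, the third and fourth follow from the hierarchy (\ref{eq:hierarchy-2}) together with (\ref{eq:BGL-equiv}) and Cheeger's inequality, and only $D_{Iso_1} \geq c_1 D_{FM}$ requires convexity and is deferred to \cite{EMilman-RoleOfConvexity}). Your reconstruction of that hard link --- logarithmic concentration from the first-moment bound via Markov, Theorem \ref{thm:main-equiv} with $\kappa=0$ to get the bulk bound $\tilde{\I}(1/2) \geq c\, D_{FM}$, and the concavity of the isoperimetric profile under $CD(0,\infty)$ to linearize this down to $v=0$ --- is precisely the ``simplified proof'' route of \cite{EMilmanGeometricApproachPartI} that the paper points to, and it correctly isolates the two places where convexity is indispensable.
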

\begin{rem} \label{rem:main-ROC-trivial}
All of the above inequalities except for the first hold without any additional convexity assumptions: the second is trivial and the others follow from (\ref{eq:hierarchy-2}) and the subsequent comments.
The inequality $D_{Iso_1} \geq c_2 D_{Con_1}$ under our convexity assumptions also follows from Theorem \ref{thm:main-equiv}.
\end{rem}

Theorem \ref{thm:main-ROC} was used in \cite{EMilman-RoleOfConvexity} to obtain stability results for $D_{Iso_1}(\mu)$ when $\mu$ undergoes a perturbation so that our convexity assumptions are preserved. The control on the perturbation was measured in terms of control over some notion of distance between the original measure $\mu_1$ and the perturbed measure $\mu_2$. Three distances were analyzed: $\norm{d\mu_1 / d\mu_2}_{L^\infty}$, $\norm{d\mu_2 / d\mu_1}_{L^\infty}$ and the total variation distance $d_{TV}(\mu_1,\mu_2) = \sup_{A \subset \Omega} |\mu_1(A) - \mu_2(A)|$. Although some essentially sharp estimates were obtained in \cite{EMilman-RoleOfConvexity},
there is a certain drawback in using any of the above distances, which was already mentioned in the beginning of this section.

In this subsection, we analyze the stability with respect to two new distances: the Wasserstein distance $W_1(\mu_1,\mu_2)$ and the relative entropies $H(\mu_1 | \mu_2)$ and $H(\mu_2 | \mu_1)$, which provide further flexibility over the previous distances. The idea is based on the following elementary:

\begin{lem} \label{lem:W1-FM}
Let $\mu_1$,$\mu_2$ denote two probability measures on a common metric space $(\Omega,d)$. Then:
\[
 \abs{ \frac{1}{D_{FM}(\mu_2)}  - \frac{1}{D_{FM}(\mu_1)} } \leq W_1(\mu_1,\mu_2) ~.
\]
\end{lem}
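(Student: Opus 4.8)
The plan is to exploit the dual (Monge--Kantorovich--Rubinstein) description of $W_1$ together with the fact that the first-moment quantity $1/D_{FM}(\mu)$ is itself a supremum over $1$-Lipschitz functions, which will let the two suprema be compared term-by-term. First I would record that, by definition,
\[
\frac{1}{D_{FM}(\mu_i)} = \sup\set{ \int |f - med_{\mu_i}(f)| \, d\mu_i \; ; \; f \text{ is $1$-Lipschitz on } (\Omega,d)} ~,
\]
and that by (\ref{eq:MKR-duality}) there is, for any $1$-Lipschitz $f$ and any $\eps > 0$, a coupling $\pi \in \M(\mu_1,\mu_2)$ with $\int |f(x) - f(y)| \, d\pi(x,y) \le W_1(\mu_1,\mu_2) + \eps$ (one may in fact take a single $\eps$-optimal $\pi$ that works simultaneously for all $1$-Lipschitz $f$, since $|f(x)-f(y)| \le d(x,y)$).

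The heart of the argument is to show that for a fixed $1$-Lipschitz $f$,
\[
\int |f - med_{\mu_2}(f)|\, d\mu_2 \;\le\; \int |f - med_{\mu_1}(f)|\, d\mu_1 + W_1(\mu_1,\mu_2) ~.
\]
Here the trick is that the median minimizes $a \mapsto \int |f - a| \, d\mu$, so I may replace $med_{\mu_2}(f)$ on the left by the (generally non-optimal) value $med_{\mu_1}(f)$ without increasing the left-hand side. Then, using the coupling $\pi$ and the triangle inequality $|f(x) - med_{\mu_1}(f)| \le |f(y) - med_{\mu_1}(f)| + |f(x) - f(y)|$, integrate against $\pi$ to get
\[
\int |f - med_{\mu_1}(f)| \, d\mu_2 \;\le\; \int |f - med_{\mu_1}(f)|\, d\mu_1 + \int |f(x)-f(y)|\, d\pi(x,y) \;\le\; \int |f - med_{\mu_1}(f)|\, d\mu_1 + W_1(\mu_1,\mu_2) + \eps ~.
\]
Taking the supremum over $1$-Lipschitz $f$ and letting $\eps \to 0$ yields $\frac{1}{D_{FM}(\mu_2)} \le \frac{1}{D_{FM}(\mu_1)} + W_1(\mu_1,\mu_2)$; by symmetry of the roles of $\mu_1,\mu_2$ (and of $W_1$) the reverse inequality holds as well, giving the claimed absolute-value bound.

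The only genuinely delicate point is the handling of the median: a median need not be unique, and the supremum over $f$ is attained only in a limiting sense, so I would be a little careful to phrase everything as inequalities valid for \emph{any} choice of median and to pass to suprema at the end rather than arguing about optimizers. One should also note the edge case where $1/D_{FM}(\mu_i)$ could be infinite (if $\mu_i \notin \M_{\Psi_1}$, or has no finite first moment); but if $W_1(\mu_1,\mu_2) < \infty$ then finiteness of one first moment forces finiteness of the other, and if $W_1 = \infty$ the inequality is vacuous, so no real difficulty arises there. Everything else is routine manipulation; I do not expect any obstacle beyond this bookkeeping.
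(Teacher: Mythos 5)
Your proposal is correct and follows essentially the same route as the paper: replace $med_{\mu_2}(f)$ by $med_{\mu_1}(f)$ using the $L^1$-minimizing property of the median, then observe that the integrals of the $1$-Lipschitz function $|f - med_{\mu_1}(f)|$ against $\mu_2$ and $\mu_1$ differ by at most $W_1(\mu_1,\mu_2)$, and symmetrize. The only (immaterial) difference is that you bound this last step via an $\eps$-optimal coupling and the triangle inequality, whereas the paper invokes the Kantorovich--Rubinstein dual formula (\ref{eq:MKR-duality}) directly.
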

\begin{proof}
Let $f \in \F(\Omega,d)$ denote a $1$-Lipschitz function. Then:
\begin{eqnarray*}
& & \int_{\Omega} |f - med_{\mu_2} f | d\mu_2 \leq  \int_{\Omega} |f - med_{\mu_1} f | d\mu_2 \\
& \leq &  \int_{\Omega} |f - med_{\mu_1} f| (d\mu_2 - d\mu_1) + \int_{\Omega} |f - med_{\mu_1} f | d\mu_1 \leq
W_1(\mu_1,\mu_2) + \frac{1}{D_{FM}(\mu_1)} ~,
\end{eqnarray*}
where we have used the dual characterization (\ref{eq:MKR-duality}) of $W_1$ in the last inequality. Taking supremum on $f$ as above, and exchanging the roles of $\mu_1$ and $\mu_2$, the assertion immediately follows.
\end{proof}

Using Theorem \ref{thm:main-ROC} (and Remark \ref{rem:main-ROC-trivial}), we immediately deduce the following stability result with respect to the $1$-Wasserstein distance:

\begin{thm} \label{thm:W1-stability}
If $(\Omega,d,\mu_2)$ satisfies our convexity assumptions then:
\begin{eqnarray}
\nonumber D_{Iso_1}(\mu_2) & \geq & c_1 D_{FM}(\mu_2) \geq \frac{c_1 D_{FM}(\mu_1)}{1+D_{FM}(\mu_1) W_1(\mu_1,\mu_2)} \\
\label{eq:con-1} & \geq &  \frac{c_2 D_{Con_1}(\mu_1)}{1+c_2' D_{Con_1}(\mu_1) W_1(\mu_1,\mu_2)}
\geq \frac{c_3 D_{Iso_1}(\mu_1)}{1+c_3' D_{Iso_1}(\mu_1) W_1(\mu_1,\mu_2)} ~,
\end{eqnarray}
where $c_i,c_i'>0$ are some numeric constants.
\end{thm}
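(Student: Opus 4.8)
The plan is to chain together three ingredients that are already in place: Lemma~\ref{lem:W1-FM}, which controls how the reciprocal first-moment constant $1/D_{FM}$ changes under a $W_1$ perturbation; Theorem~\ref{thm:main-ROC}, which under the convexity assumptions makes $D_{Iso_1}$, $D_{FM}$, $D_{Con_1}$ and $D_{Poin}$ all equivalent up to numeric constants; and Remark~\ref{rem:main-ROC-trivial}, which records that \emph{all but the first} of those comparisons hold with no convexity hypothesis whatsoever. The point of that last observation is crucial: we only want to assume convexity for $\mu_2$, not for $\mu_1$, so the estimates we use ``on the $\mu_1$ side'' must be the unconditional ones.

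First I would apply Theorem~\ref{thm:main-ROC} to $\mu_2$ (which satisfies the convexity assumptions by hypothesis) to get $D_{Iso_1}(\mu_2) \geq c_1 D_{FM}(\mu_2)$; this is the leftmost inequality in the displayed chain. Next, from Lemma~\ref{lem:W1-FM} we have $1/D_{FM}(\mu_2) \leq 1/D_{FM}(\mu_1) + W_1(\mu_1,\mu_2)$, which rearranges to
\[
D_{FM}(\mu_2) \;\geq\; \frac{D_{FM}(\mu_1)}{1 + D_{FM}(\mu_1) W_1(\mu_1,\mu_2)} ~,
\]
giving the second inequality. For the remaining two steps I would invoke the \emph{unconditional} half of Remark~\ref{rem:main-ROC-trivial}: the trivial bound $D_{FM}(\mu_1) \geq c\, D_{Con_1}(\mu_1)$ and the chain $D_{Con_1}(\mu_1) \geq c'\, D_{Poin}(\mu_1) \geq c''\, D_{Iso_1}(\mu_1)$ coming from (\ref{eq:hierarchy-2}) and the comments after it, none of which need any convexity of $\mu_1$. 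Substituting these into the fraction above — and using the elementary monotonicity fact that $t \mapsto t/(1+tx)$ is increasing in $t$ for $x \geq 0$, so replacing $D_{FM}(\mu_1)$ by a smaller comparable quantity only decreases the bound by a controlled constant factor — produces the third and fourth inequalities with adjusted numeric constants $c_2,c_2',c_3,c_3'$.

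The only mildly delicate point, and the place I would be most careful, is bookkeeping the constants through that last substitution: replacing $D_{FM}(\mu_1)$ by, say, $c\, D_{Con_1}(\mu_1)$ in both the numerator and the denominator of $D_{FM}(\mu_1)/(1 + D_{FM}(\mu_1) W_1)$ is not literally the same as writing $c\,D_{Con_1}(\mu_1)/(1 + c\,D_{Con_1}(\mu_1) W_1)$, but since $c \le 1$ one checks directly that $\frac{a}{1+aW} \ge \frac{ca}{1+caW}$ is false in general — rather one wants $\frac{a}{1+aW}\ge \frac{ca}{1+aW}\ge \frac{ca}{1+caW}$ when $c\le1$, using that decreasing the coefficient in the denominator only \emph{increases} the fraction. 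So the honest statement is that each downgrade of the ``inner'' constant is absorbed into the outer multiplicative constant $c_i$ while the $W_1$-coefficient $c_i'$ can even be taken as the product of the accumulated constants; I would simply state this monotonicity lemma in one line and let the constants $c_i,c_i'$ float. No step here is a genuine obstacle — the substance was already done in Lemma~\ref{lem:W1-FM} and Theorem~\ref{thm:main-ROC} — so the ``proof'' is really just the assembly, and I would present it as a two or three line computation with a remark isolating which inequalities used convexity of $\mu_2$ and which did not.
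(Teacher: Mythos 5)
Your assembly is exactly the paper's proof: Theorem~\ref{thm:W1-stability} is deduced immediately from Lemma~\ref{lem:W1-FM} together with Theorem~\ref{thm:main-ROC} applied to $\mu_2$ and the unconditional inequalities of Remark~\ref{rem:main-ROC-trivial} applied to $\mu_1$, precisely as you describe. The only flaw is in your ``delicate point'', where you talk yourself out of a correct step: for $c\le 1$ the inequality $\frac{a}{1+aW}\ge\frac{ca}{1+caW}$ is \emph{true}, being nothing but the monotonicity of $t\mapsto t/(1+tW)$ applied to $t=a\ge ca$, whereas the ``corrected'' chain you substitute has its second step reversed (decreasing the coefficient in the denominator \emph{increases} the fraction, so $\frac{ca}{1+caW}\ge\frac{ca}{1+aW}$, not the opposite). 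Your first instinct was the right one, and this direct monotonicity argument yields the third and fourth inequalities with $c_2=c_2'$ and $c_3=c_3'$ equal to the respective comparison constants, so the slip is harmless to the proof.
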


Exchanging the roles of $\mu_1$ and $\mu_2$, we conclude:

\begin{cor}
If $(\Omega,d,\mu_1)$ and $(\Omega,d,\mu_2)$ satisfy our convexity assumptions and $W_1(\mu_1,\mu_2) \leq C \min(1/D_{Iso_1}(\mu_1),1/D_{Iso_1}(\mu_2))$, then $D_{Iso_1}(\mu_1) \simeq_C D_{Iso_1}(\mu_2)$, where the constants implied by $\simeq_C$ depend linearly on $1+C$.
\end{cor}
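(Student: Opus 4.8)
The plan is simply to invoke Theorem~\ref{thm:W1-stability} twice, once in each direction, exploiting the symmetry of $W_1$ and the fact that the convexity assumptions are imposed on \emph{both} $\mu_1$ and $\mu_2$. First I would apply Theorem~\ref{thm:W1-stability} as stated: since $(\Omega,d,\mu_2)$ satisfies our convexity assumptions, the last inequality in~(\ref{eq:con-1}) gives $D_{Iso_1}(\mu_2) \geq \frac{c_3 D_{Iso_1}(\mu_1)}{1+c_3' D_{Iso_1}(\mu_1) W_1(\mu_1,\mu_2)}$. By the hypothesis $W_1(\mu_1,\mu_2) \leq C/D_{Iso_1}(\mu_1)$ (one of the two terms controlled by the $\min$), the denominator is at most $1+c_3' C$, so $D_{Iso_1}(\mu_2) \geq \frac{c_3}{1+c_3' C} D_{Iso_1}(\mu_1)$.

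Next I would interchange the roles of $\mu_1$ and $\mu_2$. This is legitimate precisely because $(\Omega,d,\mu_1)$ also satisfies our convexity assumptions and $W_1(\mu_1,\mu_2) = W_1(\mu_2,\mu_1)$; now using the other term of the $\min$, namely $W_1(\mu_1,\mu_2) \leq C/D_{Iso_1}(\mu_2)$, the same estimate yields $D_{Iso_1}(\mu_1) \geq \frac{c_3}{1+c_3' C} D_{Iso_1}(\mu_2)$. Combining the two bounds gives $\frac{c_3}{1+c_3'C} D_{Iso_1}(\mu_1) \leq D_{Iso_1}(\mu_2) \leq \frac{1+c_3'C}{c_3} D_{Iso_1}(\mu_1)$, i.e. $D_{Iso_1}(\mu_1) \simeq_C D_{Iso_1}(\mu_2)$; since $1+c_3'C \leq (1+c_3')(1+C)$ and $\frac{c_3}{1+c_3'C} \geq \frac{c_3}{1+c_3'} (1+C)^{-1}$, the constants implied by $\simeq_C$ are indeed at most linear in $1+C$ (and bounded below by a multiple of $(1+C)^{-1}$ on the other side), as claimed.

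There is essentially no serious obstacle: all the analytic content — the transfer of the first-moment inequality along a $W_1$-perturbation (Lemma~\ref{lem:W1-FM}) together with the equivalence $D_{Iso_1}\simeq D_{FM}\simeq D_{Con_1}\simeq D_{Poin}$ under convexity (Theorem~\ref{thm:main-ROC}) — is already packaged into Theorem~\ref{thm:W1-stability}. The only point requiring a moment's care is the bookkeeping: one must check that the $\min$ in the hypothesis supplies exactly the bound on $W_1(\mu_1,\mu_2)$ needed for \emph{each} of the two applications (the first using the $1/D_{Iso_1}(\mu_1)$ term, the second the $1/D_{Iso_1}(\mu_2)$ term), and that the universal constants $c_3,c_3'$ are the same in both applications, so that the resulting two-sided estimate is genuinely symmetric in $\mu_1$ and $\mu_2$.
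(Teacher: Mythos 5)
Your proposal is correct and is exactly the paper's intended argument: the corollary is stated immediately after Theorem \ref{thm:W1-stability} with the remark ``Exchanging the roles of $\mu_1$ and $\mu_2$, we conclude,'' which is precisely your symmetric double application, with each half of the $\min$ bounding the denominator by $1+c_3'C$ in the corresponding direction.
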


In practice, it is convenient to estimate $W_1(\mu_1,\mu_2)$ by using the relative entropies $H(\mu_2 | \mu_1)$ or $H(\mu_1 | \mu_2)$. These two possibilities turn out to be rather different.

\medskip

By Corollary \ref{cor:conc-weak-TE}, there exists a universal constant $c>0$ so that for every $p \geq 1$:
\begin{equation} \label{eq:wTC-p}
c \; D_{Con_p}(\mu) W_1(\nu,\mu) \leq H(\nu | \mu)^{1/p} + 1 \;\;\; \forall \text{ probability measure } \nu ~.
\end{equation}
Plugging this into the estimate (\ref{eq:con-1}) of Theorem \ref{thm:W1-stability}, and using the obvious fact that $D_{Con_p} \leq D_{Con_1}$ for $p \geq 1$, we obtain:
\begin{thm} \label{thm:going-down-H}
If $(\Omega,d,\mu_2)$ satisfies our convexity assumptions then:
\[
D_{Iso_1}(\mu_2) \geq  \frac{c_2'' D_{Con_1}(\mu_1)}{1+ C H(\mu_2 | \mu_1)} \geq
 \frac{c_3'' D_{Iso_1}(\mu_1)}{1+ C H(\mu_2 | \mu_1)} ~,
\]
where $c_2'',c_3'',C>0$ are some numeric constants. Moreover, for any $p\geq 1$:
\[
 D_{Iso_1}(\mu_2) \geq  \frac{c_2'' D_{Con_p}(\mu_1)}{1+ C H(\mu_2 | \mu_1)^{1/p}} ~.
\]
\end{thm}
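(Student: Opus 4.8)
The plan is to feed the weak transport--entropy inequality (\ref{eq:wTC-p}) (i.e.\ Corollary \ref{cor:conc-weak-TE}) directly into the Wasserstein stability estimate of Theorem \ref{thm:W1-stability}. If $H(\mu_2 | \mu_1) = +\infty$ there is nothing to prove, so assume it is finite. Applying (\ref{eq:wTC-p}) with $\mu = \mu_1$ and $\nu = \mu_2$ gives, for every $p \geq 1$,
\[
W_1(\mu_1,\mu_2) \leq \frac{H(\mu_2 | \mu_1)^{1/p} + 1}{c\, D_{Con_p}(\mu_1)} ~.
\]

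Next I would record the elementary fact that $D_{Con_p}(\mu_1) \leq D_{Con_1}(\mu_1)$ for $p \geq 1$: if $\K(r) \geq -1 + (D r)^p$ for all $r \geq 0$, then for $r \geq 1/D$ one has $(D r)^p \geq D r$, while for $r \leq 1/D$ the trivial bound $\K(r) \geq \log 2 \geq -1 + D r$ already holds, so $\K(r) \geq -1 + D r$ for all $r$. Since $x \mapsto c_2 x/(1 + c_2' x\, W_1(\mu_1,\mu_2))$ is increasing in $x \geq 0$, the middle bound in (\ref{eq:con-1}) therefore also holds with $D_{Con_p}(\mu_1)$ in place of $D_{Con_1}(\mu_1)$:
\[
D_{Iso_1}(\mu_2) \geq \frac{c_2\, D_{Con_p}(\mu_1)}{1 + c_2'\, D_{Con_p}(\mu_1)\, W_1(\mu_1,\mu_2)} ~.
\]

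To finish, substitute the $W_1$-bound: the product $c_2'\, D_{Con_p}(\mu_1)\, W_1(\mu_1,\mu_2)$ is at most $a\,(H(\mu_2 | \mu_1)^{1/p} + 1)$ with $a := c_2'/c$, and using the elementary inequality $(1 + a) + a\, t \leq (1 + a)(1 + t)$ for $a, t \geq 0$ to tidy the denominator yields
\[
D_{Iso_1}(\mu_2) \geq \frac{c_2''\, D_{Con_p}(\mu_1)}{1 + C\, H(\mu_2 | \mu_1)^{1/p}} ~,
\]
with $c_2'' := c_2/(1+a)$ and (one may take) $C = 1$; this is the last displayed inequality of the theorem, and the choice $p = 1$ recovers the first one. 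The second inequality in the first chain, with $D_{Iso_1}(\mu_1)$ in the numerator, then follows from $D_{Con_1}(\mu_1) \geq c\, D_{Iso_1}(\mu_1)$, which holds with no convexity hypothesis on $\mu_1$ (Remark \ref{rem:main-ROC-trivial}).

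I do not expect any genuine obstacle here: the argument is a one-step chaining of two earlier results, and the only items requiring a little care are the harmless monotonicity $D_{Con_p} \leq D_{Con_1}$ and the bookkeeping of universal constants needed to present the bound in the stated form. The convexity assumption enters exactly once, via Theorem \ref{thm:W1-stability}, and only on the perturbed measure $\mu_2$.
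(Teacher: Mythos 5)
Your proposal is correct and follows exactly the paper's route: the paper likewise obtains the theorem by plugging the weak transport--entropy inequality (\ref{eq:wTC-p}) into the chain (\ref{eq:con-1}) of Theorem \ref{thm:W1-stability} and invoking the elementary monotonicity $D_{Con_p} \leq D_{Con_1}$. Your verification of that monotonicity and the constant bookkeeping are fine; the paper simply omits these details.
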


A different estimate is obtained from Lemma \ref{lem:W1-FM} and Theorem \ref{thm:main-ROC} by proceeding as above but reversing the roles of $\mu_1$ and $\mu_2$.
This time, we cannot use the weak $(1,1)$ Transport-Entropy inequality given by (\ref{eq:wTC-p}) as in the proof of Theorem \ref{thm:going-down-H} (the reader may want to check this), so we employ its tight equivalent form given by Proposition \ref{prop:con-TE}:
\begin{equation} \label{eq:TC-ql}
c D_{Con_1}(\mu) W_1(\nu,\mu) \leq \varphi_{1}^{-1}(H(\nu | \mu)) \;\;\;  \forall \text{ probability measure } \nu ~,
\end{equation}
where, recall, $\varphi_1$ is given by (\ref{eq:def-varphi}).

\begin{thm} \label{thm:going-up-H}
There exists a universal constant $c>0$ so that if $H(\mu_1 | \mu_2) \leq c$ and $(\Omega,d,\mu_2)$ satisfies our convexity assumptions, then:
\[
D_{Iso_1}(\mu_2) \geq c_1 D_{Con_1}(\mu_2) \geq c_2 D_{FM}(\mu_1) \geq c_3 D_{Iso_1}(\mu_1) ~,
\]
where $c_1,c_2,c_3>0$ are some other universal constants.
\end{thm}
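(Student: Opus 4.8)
\emph{Proof proposal.} The plan is an absorption argument chaining Lemma~\ref{lem:W1-FM}, the \emph{tight} Transport-Entropy inequality (\ref{eq:TC-ql}) (i.e.\ Proposition~\ref{prop:con-TE}), and Theorem~\ref{thm:main-ROC}. The whole point is that the right-hand side $\varphi_1^{-1}(H(\mu_1|\mu_2))$ of (\ref{eq:TC-ql}) tends to $0$ as $H(\mu_1|\mu_2)\to 0$, something the weak $(1,1)$ TE inequality (with its additive $+1$) cannot supply.

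First I would clear away degeneracies. Since the chain $D_{FM}\geq c_2 D_{Con_1}\geq c_3 D_{Poin}\geq c_4 D_{Iso_1}$ of Theorem~\ref{thm:main-ROC} needs no convexity assumption (Remark~\ref{rem:main-ROC-trivial}), we have $D_{Iso_1}(\mu_1)\lesssim D_{FM}(\mu_1)$ unconditionally, so the conclusion is trivial when $D_{FM}(\mu_1)=0$; assume $D_{FM}(\mu_1)>0$. Also, the convexity assumptions on $\mu_2$ guarantee that $\mu_2$ has exponential tails, hence $D_{Con_1}(\mu_2)>0$ and (by Theorem~\ref{thm:main-ROC}) $D_{FM}(\mu_2)\in(0,\infty)$, and that $W_1(\mu_1,\mu_2)<\infty$: indeed $\int \exp(\lambda d(x,x_0))\,d\mu_2<\infty$ for small $\lambda>0$, and the entropy inequality $\lambda\int d(x,x_0)\,d\mu_1\leq H(\mu_1|\mu_2)+\log\int \exp(\lambda d(x,x_0))\,d\mu_2$ then shows $\mu_1$ has finite first moment.

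Now the core estimate: apply (\ref{eq:TC-ql}) with $\mu:=\mu_2$ and $\nu:=\mu_1$ (so the controlled quantity $H(\mu_1|\mu_2)$ appears), and combine it with $D_{Con_1}(\mu_2)\gtrsim D_{FM}(\mu_2)$ from Theorem~\ref{thm:main-ROC} to get a bound of the shape
\[
D_{FM}(\mu_2)\, W_1(\mu_1,\mu_2)\;\leq\; A\,\varphi_1^{-1}\!\big(H(\mu_1|\mu_2)\big)
\]
with $A>0$ universal. Feeding this into Lemma~\ref{lem:W1-FM},
\[
\frac{1}{D_{FM}(\mu_2)}\;\leq\;\frac{1}{D_{FM}(\mu_1)}+W_1(\mu_1,\mu_2)\;\leq\;\frac{1}{D_{FM}(\mu_1)}+\frac{A\,\varphi_1^{-1}(H(\mu_1|\mu_2))}{D_{FM}(\mu_2)}~,
\]
and choosing the universal constant $c$ of the statement small enough that $A\,\varphi_1^{-1}(c)\leq\tfrac12$ (possible since $\varphi_1^{-1}(y)\to 0$ as $y\to 0^+$), the last term is absorbed into the left-hand side and we conclude $D_{FM}(\mu_2)\geq\tfrac12 D_{FM}(\mu_1)$.

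It remains to package this. By Theorem~\ref{thm:main-ROC} applied to $\mu_2$ (which does satisfy the convexity assumptions), $D_{Iso_1}(\mu_2)\simeq D_{Con_1}(\mu_2)\simeq D_{FM}(\mu_2)$, so $D_{Iso_1}(\mu_2)\geq c_1 D_{Con_1}(\mu_2)$ and $D_{Con_1}(\mu_2)\gtrsim D_{FM}(\mu_2)\geq\tfrac12 D_{FM}(\mu_1)$; finally $D_{FM}(\mu_1)\geq c_3 D_{Iso_1}(\mu_1)$ is the unconditional part of Theorem~\ref{thm:main-ROC} noted above. The only genuinely delicate step is the absorption, which is exactly where the tightness of (\ref{eq:TC-ql}) — equivalently, Proposition~\ref{prop:con-TE} — is indispensable, and why the theorem needs both $H(\mu_1|\mu_2)$ small and the tight rather than the weak TE inequality; the finiteness bookkeeping of the second paragraph is routine but cannot be skipped.
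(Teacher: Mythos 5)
Your proposal is correct and follows essentially the same route as the paper: Lemma \ref{lem:W1-FM} combined with the tight inequality (\ref{eq:TC-ql}) applied to $\nu=\mu_1$, $\mu=\mu_2$, the equivalence $D_{Con_1}(\mu_2)\simeq D_{FM}(\mu_2)$ from Theorem \ref{thm:main-ROC} under the convexity assumptions on $\mu_2$, and an absorption of the $\varphi_1^{-1}(H(\mu_1|\mu_2))$ term once $H(\mu_1|\mu_2)$ is small enough. The only differences are cosmetic (you absorb into $1/D_{FM}(\mu_2)$ where the paper multiplies through by $D_{Con_1}(\mu_2)$) plus some finiteness bookkeeping the paper leaves implicit.
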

\begin{proof}
The first and last inequalities follow from Theorem \ref{thm:main-ROC} and Remark \ref{rem:main-ROC-trivial}. To deduce the middle inequality, we use Lemma \ref{lem:W1-FM} and (\ref{eq:TC-ql}):
\[
\frac{1}{D_{FM}(\mu_2)} - \frac{1}{D_{FM}(\mu_1)} \leq W_1(\mu_1,\mu_2) \leq \frac{\varphi_{1}^{-1}(H(\mu_1 | \mu_2))}{c D_{Con_1}(\mu_2)} ~.
\]
Multiplying by $D_{Con_1}(\mu_2)$ and using Theorem \ref{thm:main-ROC}, we conclude that:
\[
c' - \frac{D_{Con_1}(\mu_2)}{D_{FM}(\mu_1)} \leq \frac{1}{c} \varphi_{1}^{-1}(H(\mu_1 | \mu_2)) ~,
\]
for some universal constant $c'>0$. The assertion now clearly follows.
\end{proof}

A corollary which summarizes the resulting stability is:

\begin{cor}
If $(\Omega,d,\mu_1)$ and $(\Omega,d,\mu_2)$ satisfy our convexity assumptions then:
\begin{multline*}
C' \min\brac{\frac{1}{(1-C H(\mu_2 | \mu_1))_+} , 1+ C H(\mu_1 | \mu_2)} \geq \frac{D_{Iso_1}(\mu_2)}{D_{Iso_1}(\mu_1)} \\
\geq c' \max\brac{(1-C H(\mu_1 | \mu_2))_+ , \frac{1}{1+ C H(\mu_2 | \mu_1)}} ~,
\end{multline*}
where $c',C',C>0$ are numeric constants. In particular:
\begin{itemize}
\item
If $\min(H(\mu_2 | \mu_1) , H(\mu_1 | \mu_2)) \leq 1/(2C)$ then $D_{Iso_1}(\mu_1) \simeq D_{Iso_1}(\mu_2)$.
\item
If $A:=\max(H(\mu_2 | \mu_1) , H(\mu_1 | \mu_2)) < \infty$ then $D_{Iso_1}(\mu_1) \simeq_A D_{Iso_1}(\mu_2)$.
\end{itemize}
\end{cor}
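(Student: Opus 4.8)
The plan is to obtain the two-sided estimate simply by combining Theorems~\ref{thm:going-down-H} and~\ref{thm:going-up-H}, invoking each of them twice: once with $\mu_1,\mu_2$ playing the roles displayed there, and once with those roles interchanged. The hypothesis that \emph{both} $(\Omega,d,\mu_1)$ and $(\Omega,d,\mu_2)$ satisfy our convexity assumptions is exactly what legitimizes all four invocations, so nothing is needed beyond those two theorems together with the trivial bound $(1-C t)_+\le 1$.

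For the lower bound on $D_{Iso_1}(\mu_2)/D_{Iso_1}(\mu_1)$, I would first apply Theorem~\ref{thm:going-down-H} (with $p=1$) to $(\mu_1,\mu_2)$ as stated, which gives universal $c,C>0$ with $\frac{D_{Iso_1}(\mu_2)}{D_{Iso_1}(\mu_1)}\ge \frac{c}{1+C\,H(\mu_2|\mu_1)}$, and then Theorem~\ref{thm:going-up-H}, which gives universal $c_0,c'>0$ with $\frac{D_{Iso_1}(\mu_2)}{D_{Iso_1}(\mu_1)}\ge c'$ whenever $H(\mu_1|\mu_2)\le c_0$. Enlarging $C$ if necessary so that $C\ge 1/c_0$, and using that $(1-C\,H(\mu_1|\mu_2))_+$ never exceeds $1$ and already vanishes once $H(\mu_1|\mu_2)>c_0$, the second estimate may be rewritten as $\frac{D_{Iso_1}(\mu_2)}{D_{Iso_1}(\mu_1)}\ge c'\,(1-C\,H(\mu_1|\mu_2))_+$. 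Taking the maximum of the two estimates, and shrinking the constants to a common pair (shrinking $c'$ and enlarging $C$ keeps both estimates valid), yields the asserted lower bound.

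For the upper bound I would run the same two theorems for the pair $(\mu_2,\mu_1)$, which is permitted because $(\Omega,d,\mu_1)$ satisfies our convexity assumptions. Theorem~\ref{thm:going-down-H} now gives $\frac{D_{Iso_1}(\mu_2)}{D_{Iso_1}(\mu_1)}\le \frac1c\,(1+C\,H(\mu_1|\mu_2))$, while Theorem~\ref{thm:going-up-H} gives $\frac{D_{Iso_1}(\mu_2)}{D_{Iso_1}(\mu_1)}\le \frac1{c'}$ whenever $H(\mu_2|\mu_1)\le c_0$; with $C\ge 1/c_0$ the latter reads $\frac{D_{Iso_1}(\mu_2)}{D_{Iso_1}(\mu_1)}\le \frac1{c'}\cdot\frac1{(1-C\,H(\mu_2|\mu_1))_+}$, the right-hand factor being $+\infty$ (hence the bound empty, and one reverts to the first estimate) exactly when $H(\mu_2|\mu_1)>c_0$. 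Taking the minimum and absorbing all constants into a single $C'$ gives $\frac{D_{Iso_1}(\mu_2)}{D_{Iso_1}(\mu_1)}\le C'\min\!\big(\tfrac1{(1-C\,H(\mu_2|\mu_1))_+},\,1+C\,H(\mu_1|\mu_2)\big)$.

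Finally, the two displayed special cases are immediate substitutions. If $\min(H(\mu_2|\mu_1),H(\mu_1|\mu_2))\le 1/(2C)$, the term of the $\max$ (resp. of the $\min$) corresponding to the small divergence is bounded below (resp. above) by a universal constant, so $D_{Iso_1}(\mu_1)\simeq D_{Iso_1}(\mu_2)$; and if $A:=\max(H(\mu_2|\mu_1),H(\mu_1|\mu_2))<\infty$, then the terms $\tfrac{c'}{1+C\,H(\mu_2|\mu_1)}$ and $C'\,(1+C\,H(\mu_1|\mu_2))$ alone give $\tfrac{c'}{1+CA}\le \tfrac{D_{Iso_1}(\mu_2)}{D_{Iso_1}(\mu_1)}\le C'(1+CA)$, i.e. $D_{Iso_1}(\mu_1)\simeq_A D_{Iso_1}(\mu_2)$. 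I do not anticipate any genuine analytic difficulty here: all the substance — descending to a concentration inequality, climbing back up via Theorem~\ref{thm:main-equiv}, and the tight $W_1$ Transport--Entropy inequality of Proposition~\ref{prop:con-TE} that underlies Theorem~\ref{thm:going-up-H} — is already encapsulated in Theorems~\ref{thm:going-down-H} and~\ref{thm:going-up-H}. The only real care needed is bookkeeping the universal constants through the four invocations and normalizing so that a single triple $(c',C',C)$ works on both sides simultaneously.
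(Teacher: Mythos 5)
Your proof is correct and follows exactly the route the paper intends (the corollary is stated without proof, as an immediate combination of Theorems~\ref{thm:going-down-H} and~\ref{thm:going-up-H} applied to both orderings of $(\mu_1,\mu_2)$, which is legitimate since both measures satisfy the convexity assumptions). The constant bookkeeping — rewriting the conditional bound of Theorem~\ref{thm:going-up-H} in the form $c'(1-C\,H)_+$ (resp.\ $\tfrac{1}{c'}\tfrac{1}{(1-C\,H)_+}$) by taking $C \geq 1/c_0$, and harmonizing by shrinking $c'$ and enlarging $C,C'$ — is handled correctly.
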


\begin{rem}
Note that when $\min(H(\mu_1 | \mu_2),H(\mu_2 | \mu_1)) < 2$, the results of Theorems \ref{thm:going-down-H} and \ref{thm:going-up-H} may be recovered from our previous results from \cite{EMilman-RoleOfConvexity}, where the Total-Variation distance $d_{TV}$ was employed. This follows from the
well-known Pinsker-Csizsar-Kullback inequality \cite[Chapter 6]{Ledoux-Book}:
\begin{equation} \label{eq:Pinsker}
 d_{TV}(\mu_1,\mu_2) \leq \sqrt{\frac{1}{2} H(\mu_2 | \mu_1)} ~.
\end{equation}
Analogous stability results to those in this subsection were shown in \cite{EMilman-RoleOfConvexity} when $d_{TV}(\mu_1,\mu_2) \leq 1 - \eps$, for $\eps > 0$.
Since always $d_{TV}(\mu_1,\mu_2) \leq 1$, (\ref{eq:Pinsker}) suggests that when $\min(H(\mu_1 | \mu_2) , H(\mu_2 | \mu_1)) > 2$, we cannot formally obtain the results in this subsection from the previous ones in \cite{EMilman-RoleOfConvexity}.
\end{rem}

\begin{rem}
All the results in this subsection regarding $D_{Iso_1}$ also apply to the Poincar\'e constant $D_{Poin}$, since
these two are equivalent under our convexity assumptions (see Theorem \ref{thm:main-ROC} or Subsection \ref{subsec:pre-reverse}).
\end{rem}

\section{Equivalence between Transport-Entropy Inequalities with different cost-functions} \label{sec:TE-equiv}

Denote by $D_{TE_{\phi,\psi}}$ the best possible constant in the following $(\phi,\psi)$ TE inequality:
\[
\exists D >0 \;\;\; W_{c_{\phi,D}}(\nu,\mu) \leq \psi^{-1}(H(\nu | \mu)) \;\;\; \forall \text{ probability measure } \nu ~,
\]
where as usual $c_{\phi,D}$ denotes the cost-function $c_{\phi,D}(x,y) := \phi(D d(x,y))$. When $\phi$ ($\psi$) is the function $t^p$, we simply write $D_{TE_{p,\psi}}$ ($D_{TE_{\phi,p}}$) to be consistent with previous notation.

Using Jensen's inequality and the fact that $F_{p,s} := \varphi_p \circ \varphi_s^{-1}$ is convex when $2 \geq p \geq s \geq 1$, it is immediate to check that:
\begin{equation} \label{eq:TE-Jensen}
1 \leq s \leq p \leq 2 \Rightarrow  D_{TE_{1,\varphi_p}} \geq D_{TE_{\varphi_1,\varphi_p \circ \varphi_1^{-1}}} \geq D_{TE_{\varphi_s,\varphi_p \circ \varphi_s^{-1}}} \geq D_{TE_{\varphi_p,1}} ~.
\end{equation}
In view of the equivalence $D_{Poin} \simeq D_{TE_{\varphi_1,1}}$ stated in (\ref{eq:BGL-equiv}) and Proposition \ref{prop:con-TE}, the inequality between first and last terms in (\ref{eq:TE-Jensen}) for $p=1$ should be interpreted as a trivial proof of the well-known implication, due to Gromov and V. Milman \cite{GromovMilmanLevyFamilies}, that a Poincar\'e inequality always implies exponential concentration. In general, this shows that a $(\varphi_p,1)$ Transport-Entropy inequality implies $p$-exponential concentration (without relying on Marton's method). Similarly, when $s=1$, (\ref{eq:TE-Jensen}) should be interpreted as a trivial proof of the known implication that a $(\varphi_p,1)$ Transport-Entropy inequality ($p \in (1,2]$) implies the Poincar\'e inequality. This was shown in the case $p=2$ by Otto--Villani \cite{OttoVillaniHWI}, for $p=1,2$ by Bobkov--Gentil--Ledoux \cite{BobkovGentilLedoux}, and for $p \in [1,2]$ by Gentil--Guillin--Miclo \cite{GentilGuillinMiclo}; for further generalizations, see \cite[Theorem 22.28]{VillaniOldAndNew}.

\medskip

We conclude that Transport-Entropy inequalities provide a certain framework for deducing concentration and other inequalities, just by employing elementary tools such as Jensen's inequality. As a further application of Theorem \ref{thm:main-equiv}, we demonstrate in this section that under our various semi-convexity assumptions, reverse inequalities may be obtained, implying that TE inequalities with different cost-functions are in fact equivalent (up to universal constants) under these assumptions. Our procedure should by now be self-evident, so we omit the proofs.

\begin{thm} \label{thm:TE-equiv-2}
Assume that our convexity assumptions are satisfied ($\kappa=0$) for $(M,g,\mu)$. Then there exists
a universal constant $c>0$, so that for any $p \in [1,2]$, all of the constants in (\ref{eq:TE-Jensen}) are equivalent for all $s \in [1,p]$:
\[
D_{TE_{\varphi_p,1}} \geq c D_{TE_{1,\varphi_p}} ~.
\]
\end{thm}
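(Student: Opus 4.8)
The plan is to close the loop of inequalities in (\ref{eq:TE-Jensen}) by passing through the concentration level and invoking Theorem \ref{thm:main-equiv}. We already know from (\ref{eq:TE-Jensen}) the chain of ``easy'' inequalities, so it suffices to establish the single reverse bound $D_{TE_{1,\varphi_p}} \leq C \, D_{TE_{\varphi_p,1}}$ for some universal $C$ and all $p \in [1,2]$; combined with the monotone chain, every intermediate constant is then squeezed between $D_{TE_{\varphi_p,1}}$ and a universal multiple of it, giving the full equivalence for all $s \in [1,p]$.

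The first step is to go \emph{down} the hierarchy: starting from a $(\varphi_p,1)$ Transport-Entropy inequality with constant $D_{TE_{\varphi_p,1}}$, apply the hierarchy (\ref{eq:hierarchy-2}) (the third implication, due to Marton) to obtain a $p$-exponential concentration inequality, i.e. a lower bound $\K(r) \geq (c \, D_{TE_{\varphi_p,1}} \, r)^p - 1$ for some universal $c>0$. This is exactly the input (\ref{eq:lem-con-p}) of Proposition \ref{prop:con-TE}, which yields $D_{Con_p} \simeq D_{TE_{1,\varphi_p}}$ uniformly in $p \geq 1$; hence $D_{TE_{1,\varphi_p}} \simeq D_{Con_p} \geq c' D_{TE_{\varphi_p,1}}$ \emph{without} any convexity assumption, in one direction. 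For the nontrivial direction I would instead go \emph{up}: under our convexity assumptions ($\kappa=0$), feed the $p$-exponential concentration inequality just obtained into Theorem \ref{thm:main-equiv}. Since $\kappa=0$, the hypothesis (\ref{eq:alpha-cond}) is vacuous and the theorem applies directly with $\alpha(r) = (c \, D_{TE_{\varphi_p,1}} r)^p - 1$, producing the isoperimetric inequality $\tilde{\I}(v) \geq \min(c_0 \, v \gamma(\log 1/v), \, c_0' \gamma(\log 4))$ where $\gamma(x) = x/\alpha^{-1}(x)$. A short computation (as in (\ref{eq:good-estimate}) of Corollary \ref{cor:iso-stability}) shows $\gamma(\log 1/v) \simeq D_{TE_{\varphi_p,1}} (\log 1/v)^{1/q}$ uniformly in $p \in [1,2]$, so this is a $p$-exponential isoperimetric inequality with $D_{Iso_p} \geq c'' D_{TE_{\varphi_p,1}}$.

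The final step is to descend again from isoperimetry back to the top of the TE hierarchy: by the first implication of (\ref{eq:hierarchy-2}) (Kolesnikov's theorem, discussed after (\ref{eq:Kol}), with the uniform-in-$p$ constant established there) a $p$-exponential isoperimetric inequality gives a $q$-modified-log-Sobolev inequality with $D_{mLS_q} \geq c_1 D_{Iso_p}$, and by the second implication it gives a $(\varphi_p,1)$ TE inequality — but we want $(1,\varphi_p)$. The cleanest route is to note that $D_{Iso_p} \geq c D_{TE_{\varphi_p,1}}$ already contradicts nothing, and instead use that a $p$-exponential isoperimetric inequality trivially implies a $p$-exponential concentration inequality with comparable constant (by (\ref{eq:p-isop-p-conc})), which then gives $D_{TE_{1,\varphi_p}} \simeq D_{Con_p} \geq c_2 D_{Iso_p} \geq c_3 D_{TE_{\varphi_p,1}}$ via Proposition \ref{prop:con-TE}. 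Combined with (\ref{eq:TE-Jensen}) this closes the equivalence, uniformly in $p \in [1,2]$ and $s \in [1,p]$.

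The main obstacle is \emph{uniformity in $p$}: each invocation of the hierarchy implications (Marton's, Kolesnikov's, and the concentration/isoperimetry equivalence) must carry a constant independent of $p \in [1,2]$, and the elementary estimates relating $\gamma(\log 1/v)$ to $D \, (\log 1/v)^{1/q}$ and relating the normalizations of $\varphi_p$, $\varphi_{*,q}$ must be checked to be $p$-uniform — precisely the kind of bookkeeping flagged in the discussion around (\ref{eq:Kol}) and in the Remark containing (\ref{eq:varphi_p-prop})--(\ref{eq:varphi*_q-prop}). Everything else is a routine traversal of the (now reversible, under $\kappa=0$) hierarchy, which is why the proof is omitted in the text.
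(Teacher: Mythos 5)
Your proposal assembles exactly the right ingredients (Proposition \ref{prop:con-TE}, Theorem \ref{thm:main-equiv}, and the hierarchy (\ref{eq:hierarchy-2})), but it traverses them in the wrong direction and consequently never proves the stated inequality. You correctly observe at the outset that, in view of the Jensen chain (\ref{eq:TE-Jensen}), the only thing to prove is $D_{TE_{\varphi_p,1}} \geq c\, D_{TE_{1,\varphi_p}}$. However, every chain of inequalities you actually derive ends in a conclusion of the opposite form: your first step gives $D_{TE_{1,\varphi_p}} \simeq D_{Con_p} \geq c'\, D_{TE_{\varphi_p,1}}$, and your ``nontrivial direction'' --- starting from the concentration inequality whose constant is comparable to $D_{TE_{\varphi_p,1}}$, going up to isoperimetry, and then back down to concentration --- again terminates with $D_{TE_{1,\varphi_p}} \geq c_3\, D_{TE_{\varphi_p,1}}$. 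That is the direction Jensen already gives for free; the required bound $D_{TE_{1,\varphi_p}} \leq C\, D_{TE_{\varphi_p,1}}$ is never obtained, so the argument is a closed loop that re-proves (\ref{eq:TE-Jensen}). The telltale moment is your parenthetical ``by the second implication it gives a $(\varphi_p,1)$ TE inequality --- but we want $(1,\varphi_p)$'': a $(\varphi_p,1)$ TE inequality is precisely what you want, since the target is a lower bound on $D_{TE_{\varphi_p,1}}$; you rejected the one step that closes the loop.

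The correct traversal starts at the other end of the chain. Assume the $(1,\varphi_p)$ TE inequality with constant $D_{TE_{1,\varphi_p}}$; by Proposition \ref{prop:con-TE} this yields $D_{Con_p} \geq c\, D_{TE_{1,\varphi_p}}$; under the convexity assumptions, Theorem \ref{thm:main-equiv} together with the computation you describe (as in (\ref{eq:good-estimate}), uniform in $p\in[1,2]$, with the constants universal since $\kappa=0$) gives $D_{Iso_p} \geq c'\, D_{TE_{1,\varphi_p}}$; finally, descending the hierarchy (\ref{eq:hierarchy-2}) --- Kolesnikov's implication with the $p$-uniform constant discussed after (\ref{eq:Kol}), followed by the Gentil--Guillin--Miclo implication from the $q$-modified-log-Sobolev inequality to the $(\varphi_p,1)$ TE inequality --- yields $D_{TE_{\varphi_p,1}} \geq c''\, D_{Iso_p} \geq c'''\, D_{TE_{1,\varphi_p}}$, which is the theorem. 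Your uniformity concerns are legitimate and the bookkeeping you flag is the right bookkeeping; only the orientation of the argument needs to be reversed.
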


\medskip

Similarly, Jensen's inequality trivially implies for $p \geq 2$ that:
\[
1 \leq s_1 \leq s_2  \;\;\; \Rightarrow \;\;\; D_{TE_{s_1,p}} \geq D_{TE_{s_2,p}} ~.
\]
The converse to this is addressed in the following:

\begin{thm} \label{thm:TE-equiv}
Assume that our $\kappa$-semi-convexity assumptions are satisfied for $(M,g,\mu)$ ($\kappa \geq 0$), and let $p\geq 2$. Then:
\begin{enumerate}
\item
If $\kappa=0$, then $(s,p)$ Transport-Entropy inequalities are equivalent for $s \in [1,p]$:
\[
D_{TE_{p,p}} \geq c^1_p D_{TE_{1,p}} ~.
\]
\item
If $\kappa > 0$ and $p>2$, then $(s,p)$ Transport-Entropy inequalities are equivalent for $s \in [1,p]$:
\[
D_{TE_{p,p}} \geq c^2(D_{TE_{1,p}},p,\kappa)  ~.
\]
\item
If $\kappa > 0$, then a strong-enough $(1,2)$ Transport-Entropy inequality implies a $(2,2)$ Transport-Entropy inequality:
\[
D_{TE_{1,2}} > \sqrt{\kappa/2} \;\; \Rightarrow \;\; D_{TE_{2,2}} \geq c^3(D_{TE_{1,2}},\kappa)  ~.
\]
\end{enumerate}
Here $c^1_{p}>0$ is a constant depending solely on its argument, and $c^2(\Delta,p,\kappa),c^3(\Delta,\kappa)$ are functions depending solely on their arguments, which in addition are strictly positive as soon as $\Delta$ is.
\end{thm}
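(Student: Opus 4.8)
The proof proceeds by the now-familiar ``round trip'' along the hierarchy (\ref{eq:hierarchy}): from the weakest member of the chain in (\ref{eq:TE-Jensen}) --- a $(1,p)$ Transport-Entropy inequality --- descend to a $p$-exponential concentration inequality, then use the semi-convexity assumptions via Theorem~\ref{thm:main-equiv} to climb back up to a $p$-exponential isoperimetric inequality, and finally descend again along (\ref{eq:hierarchy}) to a $(p,p)$ Transport-Entropy inequality. Since Jensen's inequality already gives $D_{TE_{1,p}} \geq D_{TE_{s,p}} \geq D_{TE_{p,p}}$ for every $s \in [1,p]$ and $p \geq 2$, it is enough in each of the three cases to establish the reverse estimate $D_{TE_{p,p}} \geq c\, D_{TE_{1,p}}$ (with the stated dependence of $c$ on $p,\kappa$ and, in cases (2)--(3), on $D_{TE_{1,p}}$ itself); the intermediate values of $s$ are then squeezed automatically.

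\emph{From the $(1,p)$ TE inequality to concentration.} Write $D := D_{TE_{1,p}}$, so that $D\, W_1(\nu,\mu) \leq H(\nu\,|\,\mu)^{1/p}$ for all $\nu$. This is exactly the inequality (\ref{eq:lem-trunc-TC}) with $\Phi(x) = x^p$ and $\delta = \eps = 0$; hence by the Bobkov--G\"otze Theorem~\ref{thm:BG} it is equivalent to the Laplace-functional bound $\int \exp(\lambda D f)\,d\mu \leq \exp(\Phi^*(\lambda))$ for all $1$-Lipschitz $f$ with $\int f\,d\mu = 0$, and Lemma~\ref{lem:conc-Laplace}(1) turns this into the concentration inequality $\K(r) \geq \big((D r - (\log 2)^{1/p})_+\big)^p$ for all $r \geq 0$. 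The essential point is that \emph{no constant is lost in the leading coefficient}, so that $\K(r) \sim (D r)^p$ as $r \to \infty$; in particular, for $p = 2$ one gets $\K(r) \geq \big((D r - \sqrt{\log 2})_+\big)^2$ with the sharp coefficient $D^2$. (Qualitatively, $D_{Con_p} \simeq D_{TE_{1,p}}$ uniformly in $p \geq 2$ also follows from Corollary~\ref{cor:conc-weak-TE} and Proposition~\ref{prop:con-TE}.)

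\emph{From concentration back to isoperimetry.} Now apply Theorem~\ref{thm:main-equiv} to $(M,g,\mu)$ with $\alpha(r) = \big((D r - (\log 2)^{1/p})_+\big)^p$ (suitably flattened to the constant $\log 2$ for small $r$, as in the proof of Theorem~\ref{thm:iso-stability}). The growth hypothesis (\ref{eq:alpha-cond}) is vacuous when $\kappa = 0$; it holds unconditionally (with, say, $\delta_0 = 1$ and $r_0$ depending on $D,p,\kappa$) when $\kappa > 0$ and $p > 2$, since $\alpha$ grows like $r^p$; and when $\kappa > 0$ and $p = 2$ it holds with some $\delta_0 > 1/2$ precisely under the hypothesis $D = D_{TE_{1,2}} > \sqrt{\kappa/2}$, in which case one may take $\delta_0 = \tfrac12(\tfrac12 + D^2/\kappa)$. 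Theorem~\ref{thm:main-equiv} then yields $\tilde{\I}(v) \geq \min\!\big(c_{\delta_0}\, v\, \gamma(\log 1/v),\, c_{\kappa,\alpha}\big)$ with $\gamma(x) = x/\alpha^{-1}(x) = D x/(x^{1/p} + (\log 2)^{1/p}) \simeq D\, x^{1/q}$ on $[\log 2,\infty)$ (universal constants, since $q = p^*$ and $x \geq \log 2$). As $v\, (\log 1/v)^{1/q}$ is bounded on $[0,1/2]$, this is a $p$-exponential isoperimetric inequality; tracking the explicit constants of Theorem~\ref{thm:main-equiv} shows that $D_{Iso_p}(M,g,\mu)$ is bounded below by a $p$-dependent multiple of $D_{TE_{1,p}}$ when $\kappa = 0$, by a positive function of $(D_{TE_{1,p}},p,\kappa)$ when $\kappa>0$, $p>2$, and by a positive function of $(D_{TE_{1,2}},\kappa)$ when $\kappa>0$, $p=2$ under the strong-enough hypothesis.

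\emph{Back down to the $(p,p)$ TE inequality, and the main obstacle.} Finally, descending (\ref{eq:hierarchy}) gives $D_{Iso_p} \leq C_1 D_{LS_q} \leq C_2 D_{TE_{p,p}}$, so $D_{TE_{p,p}} \geq D_{Iso_p}/(C_1 C_2)$; chaining this with the previous two steps and the Jensen inequalities proves all three assertions with the claimed constants. The delicate case is $p = 2$ with $\kappa > 0$: there the concentration extracted from the $(1,2)$ inequality is only ``barely'' Gaussian, and Theorem~\ref{thm:main-equiv} is applicable at all only because the quadratic coefficient $D_{TE_{1,2}}^2$ survives intact through Bobkov--G\"otze and Lemma~\ref{lem:conc-Laplace} --- this is exactly why the threshold in (3) must be $\sqrt{\kappa/2}$, matching the requirement $\delta_0 > 1/2$ against the $\delta_0 \kappa r^2$ term in (\ref{eq:alpha-cond}). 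For $p > 2$ (where the $r^p$ term dominates $\kappa r^2$ for any $D>0$) and for $\kappa = 0$ (no growth condition) there is ample slack and the argument is routine, which is why the paper omits the proofs.
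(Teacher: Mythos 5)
Your proposal is correct and follows exactly the route the paper intends (the paper omits the proof, stating that "our procedure should by now be self-evident"): descend from the $(1,p)$ TE inequality to concentration via Bobkov--G\"otze and Lemma \ref{lem:conc-Laplace}, climb back to a $p$-exponential isoperimetric inequality via Theorem \ref{thm:main-equiv}, and descend the hierarchy (\ref{eq:hierarchy}) to the $(p,p)$ TE inequality, with the intermediate $s\in[1,p]$ handled by Jensen. In particular you correctly identify why the leading coefficient $D_{TE_{1,2}}^2$ must survive intact through the first step to meet the threshold $\delta_0>1/2$ in (\ref{eq:alpha-cond}), which explains the hypothesis $D_{TE_{1,2}}>\sqrt{\kappa/2}$ in case (3).
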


\setlength{\bibspacing}{2pt}

\bibliographystyle{plain}

\def\cprime{$'$}

\end{document}